  \newcommand{\C}{\ensuremath{\mathbf{C}}}%
  \newcommand{\R}{\ensuremath{\mathbf{R}}}%
  \newcommand{\N}{\ensuremath{\mathbf{N}}}%
  \newcommand{\Z}{\ensuremath{\mathbf{Z}}}%
  \newcommand{\T}{\ensuremath{\mathbf{T}}}%
  \newcommand{\Q}{\ensuremath{\mathbf{Q}}}
\newtheorem{thm}{Theorem}[section]
\newtheorem{lemma}[thm]{Lemma}
\newtheorem{corollary}[thm]{Corollary}
\newtheorem{prop}[thm]{Proposition}
\theoremstyle{remark}
\newtheorem{rem}[thm]{Remark}
\newtheorem{example}[thm]{Example}
\theoremstyle{definition}
\author{Tao Mei\footnote{Research partially supported by the NSF grant DMS-1266042.}\ \ and Mikael de la Salle\footnote{Research partially supported by the ANR projects NEUMANN and OSQPI.}
}
\title{Complete boundedness of Heat Semigroups on von Neumann Algebra of hyperbolic groups}
\begin{document}\maketitle

\begin{abstract} We prove that $(\lambda_g\mapsto e^{-t|g|^r}\lambda_g)_{t>0}$ defines a completely bounded semigroup of multipliers on the von Neuman algebra of hyperbolic groups for all real number $r$. One
ingredient in the proof is the observation that a construction of
Ozawa allows to characterize the radial multipliers that are bounded
on every hyperbolic graph, partially generalizing results of
Haagerup--Steenstrup--Szwarc and Wysocza\'nski. Another ingredient is an upper estimate of trace class norms for Hankel matrices, which is based on Peller's characterization of such norms.
\end{abstract}
\section{Introduction}
\noindent Let $\Delta=\partial^2 \theta$ be the Laplacian operator on the unit circle ${\T}$. The maps $$T^r_t=e^{-t(-\Delta)^{^\frac r2}}: e^{i2\pi k\theta}\rightarrow e^{-4\pi^2t|k|^r}e^{i2\pi k\theta}$$ define a semigroup of uniformly bounded operators on $L^\infty(\T)$ for any $0<r<\infty$. Moreover, for $0<r\leq 2$, $T_t^r$ are positivity preserving contractions, which can be easily seen by their integral representations. For $r=1,2$, the semigroups $T_t^r$ are called Poisson semigroup and heat semigroup. They both play important roles, and very often complementary roles, in harmonic analysis. It is easier to work with heat semigroups, for some problems, because of the general gaussian upper estimate of the heat kernels.

Let ${\mathbb F}_n$, $2\leq n\leq\infty$ be the free group on $n$ generators. Let $\lambda_g$ be the left regular representations of $g\in {\mathbb F}_n$. 
One may consider the analogue of the classical Poisson or heat semigroups on free group von Neumann algebra ${\mathcal L}({\mathbb F}_n)$, 
$$S_t^r: \lambda_g\rightarrow e^{-t|g|^r}\lambda_g, $$
with $|g|$ the reduced word length of $g$. U. Haagerup proved (see \cite{H79}) that for $r=1$, $(S_t^1)_{t\geq 0}$ is a semigroup of completely positive operators on ${\mathcal L}({\mathbb F}_n)$. For $0<r\leq 1$, the maps $S_t^r$ are therefore still unital completely positive (u. c. p.) by the theory of subordinated semigroups (see e.g. \cite{Y80}). For $r>1$, $S_t^r$ cannot be completely positive for all $t$, as the functions $\phi_t(g)=e^{-t|g|^r}$ is in $\ell_1({\mathbb F}_2)$ for each $t$, and the positive definiteness of $\phi_t$ would imply the amenability of the free group ${\mathbb F}_2$ (see the proof of \cite[Theorem 2.6.8]{BN08}).

This discussion settles the question of the complete positivity of the semigroup $S_t^r$. How about the complete boundedness (c. b.) of $S_t^r$? For $r\leq1$, $S_t^r$ have of course completely bounded norm $1$, since u. c. p. implies c. b. with norm one on $C^*$ algebras. For $r>1$, it is not hard to see that $S_t^r$ are bounded with an upper bound $1+c t^{-\frac 1{ r}}$ for each $t>0$ by the fact that the projection on words of length at most $k$ has c. b. norm $\simeq k$. The question is then wether $S_t^r$ are (completely) bounded uniformly in $t$ for any (all) $r>1$. By duality, the principle of uniform boundness and the pointwise convergence of $S_t^r$, asking whether $\sup_t \|S_t^r\| <\infty$ is equivalent to asking whether $S_t^r$, as an operator on the predual $L_1(\hat{\mathbb F}_n)$ of $\mathcal L({\mathbb F}_n)$, converges to the identity in the strong (or weak) operator topology when $t\rightarrow 0$. Similarly, $\sup_t \|S_t^r\|_{cb} <\infty$ if and only if $S_t^r$ converges to the identity in the stable point-norm (or stable point-weak) topology.

A characterization of the completely bounded radial multipliers on free groups was given by Haagerup and Szwarc in terms of some Hankel matrix being of trace class (this was published in \cite{HSS10}, see also \cite{W95}). On the other hand a famous result of V. V. Peller (\cite{P80}) states that a Hankel matrix belongs to the trace class iff the symbol function belongs to the Besov space $B_1^1$ (see Section 3). 
These results together provide a precise method for estimating the complete bounds of radial maps on free groups. However, the answer to the uniform complete boundedness of $S_t^r$ (for $r>1)$ remained open, to the best knowledges of the authors, before the writing of this article.
Besides the possible technicality of the corresponding estimation, another reason may be the general belief of a negative answer to the question. For example Knudby proved in \cite[Theorem 1.6]{K14} that the symbol of a completely contractive semigroup of radial multipliers on $\mathbb{F}_n$ ($n \geq 2$) grows linearly. By considering the function $n^r -C$ which does not grow linearly if $r>1$, this implies that for each $r>1$ there does not exist a constant $C$ such $\|S_t^r\|_{cb} \leq e^{t C}$ for all $t>0$. 

In the present work we record a proof that $S_t^r$ is a weak * continuous analytic semigroup of completely bounded maps on the free group von Neumann algebras for any $r>0$, by a careful estimation of the corresponding Besov norm. We remark that by \cite{W95,HM12,M14,D13}, this also proves that the analogous semigroups on free products of groups, on free products of operator algebras, and on amalgamated free products of finite von Neumann algebras, are completely bounded with bound independent on $t$. We do not elaborate on this and refer to \cite{W95,HM12,M14,D13} for the definitions of radial multipliers on free product of groups and (amalgamated) free products of operator algebras, and for details.

The authors hope that the study of $S_t^r$ would benefit the recent research in harmonic analysis on noncommutative $L_p$ spaces (see \cite{JMX06,JM12,JMP}), as the classical case suggests.

The main ingredient that we introduce is a result on the trace class norm of Hankel matrices with smooth symbol. A particular case of our result is the following (see Theorem \ref{R+} for a more precise statement).
\begin{thm}\label{thm=R+_intro} Let $f\colon [0,\infty) \to \R$ be a bounded continuous function of class $C^2$ on $(0,\infty)$, and $ \frac12\geq\alpha> 0$. Then the trace class norm of the matrix $(f(j+k) - f(j+k+1))_{j,k \geq 0}$ is less than $\frac{C}{\sqrt {\alpha}}\sqrt{\| x^{\frac 3 2 - \alpha} f''\|_{L^2(\R_+)} \| x^{\frac 3 2 + \alpha} f''\|_{L^2(\R_+)}}$ 
for some universal constant $C$.
\end{thm}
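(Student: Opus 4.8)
The plan is to use Peller's theorem to convert the claim into a Besov--space estimate, and then to carry out that estimate by Littlewood--Paley analysis. By Peller's theorem \cite{P80}, the trace class norm of $\Gamma_f:=(f(j+k)-f(j+k+1))_{j,k\ge 0}$ is comparable to $\|\varphi\|_{B_1^1}$, where $\varphi(z)=\sum_{n\ge 0}\gamma_n z^n$, $\gamma_n:=f(n)-f(n+1)$, is the analytic symbol, and for analytic $\varphi$ on the unit disc $\mathbb D$ one has $\|\varphi\|_{B_1^1}\asymp|\varphi(0)|+|\varphi'(0)|+\int_{\mathbb D}|\varphi''(z)|\,dA(z)$. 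So it is enough to bound $|\gamma_0|$, $|\gamma_1|$ and $\int_{\mathbb D}|\varphi''|\,dA$ by $\tfrac{C}{\sqrt\alpha}\sqrt{\|x^{3/2-\alpha}f''\|_2\|x^{3/2+\alpha}f''\|_2}$. One may assume this quantity finite, whence $\int_R^\infty|f''|<\infty$ for all $R>0$; then $f'(\infty)$ exists, and subtracting a constant (which changes neither $\Gamma_f$ nor $f''$) we may take $f(\infty)=0$, so that $f(x)=\int_x^\infty(s-x)f''(s)\,ds$, $\gamma_n=\int_0^\infty f''(s)k_n(s)\,ds$ with $k_n(s)=\min((s-n)_+,1)$, and $\varphi''(z)=\int_0^\infty f''(s)K_s''(z)\,ds$ with $K_s(z):=\sum_n k_n(s)z^n$.

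The boundary terms are a warm-up. From $|\gamma_j|\le\int_0^\infty|f''(s)|\min(s,1)\,ds$ ($j=0,1$), write $|f''|=(s^{3/2-\alpha}|f''|)^{1/2}(s^{3/2+\alpha}|f''|)^{1/2}s^{-3/2}$, cut the integral into dyadic blocks $[2^j,2^{j+1}]$, and apply Cauchy--Schwarz twice --- in $s$ on each block and then over the scale index $j$. Carrying the two weights along turns a single weighted norm into the geometric mean of the two, and $\sum_j 2^{-2\alpha|j|}\asymp\alpha^{-1}$ gives the loss $\alpha^{-1/2}$. This two-step Cauchy--Schwarz mechanism is exactly what produces the exponent $\tfrac12$ and the $\alpha^{-1/2}$ in the main estimate too.

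The heart of the proof is $\int_{\mathbb D}|\varphi''|\,dA=\int_{\mathbb D}\bigl|\int_0^\infty f''(s)K_s''(z)\,ds\bigr|\,dA(z)$. One must not use the triangle inequality in $s$: since $\int_{\mathbb D}|K_s''|\,dA\asymp s\log s$, that route --- equivalently, any route through $\|\varphi''(r\,\cdot)\|_{L^2(\mathbb T)}$, i.e. Cauchy--Schwarz in the angular variable --- gives only $\lesssim\int_0^\infty|f''(s)|\,s\log s\,ds$, and Cauchy--Schwarz in $s$ gives a weighted $L^2$ bound with the wrong weight $x^{5/2}$. The obstruction is that the symbol $\gamma_n=f(n)-f(n+1)$ depends very \emph{non-locally} on $f''$: for $s\in[2^k,2^{k+1}]$ the polynomial $K_s''$ carries all frequencies up to $\sim 2^{k+1}$, the low ones $n<2^k$ with an $s$-independent coefficient, so the dyadic-in-$s$ pieces of $\varphi''$ are not frequency-localized. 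The plan is thus a Littlewood--Paley decomposition $f''=\sum_k b_k$, $\operatorname{supp} b_k\subset[2^k,2^{k+1}]$, splitting $\Phi_k:=\int b_k(s)K_s(\cdot)\,ds$ into (i) a part $\Phi_k^{(i)}$ with Fourier support in $[2^k,2^{k+1})$, for which $\|\Phi_k^{(i)}\|_{L^1(\mathbb T)}\lesssim 2^{k/2}\|b_k\|_2$ up to a logarithm --- hence $\|\Phi_k^{(i)}\|_{B_1^1}\asymp 2^k\|\Phi_k^{(i)}\|_{L^1(\mathbb T)}\lesssim 2^{3k/2}\|b_k\|_2$ --- by estimating the circle integral directly (near $\theta=0$ trivially, for $|\theta|\gtrsim 2^{-k}$ by summation by parts in $n$, in both cases with Cauchy--Schwarz in $s$ over an interval of length $2^k$); and (ii) a ``staircase'' part $(\int b_k)\sum_{n<2^k}z^n$, whose scalar $\int b_k=f'(2^{k+1})-f'(2^k)$ is small by the decay of $f'$, and which must be collected over all $k$ (the individual pieces are not in $B_1^1$) and summed by parts. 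For the frequency-localized parts, $\int_{\mathbb D}|(\sum_k\Phi_k^{(i)})''|\,dA\lesssim\sum_k 2^{3k/2}\|b_k\|_2$ (logarithms aside), and since $2^{3k/2}\|b_k\|_2=2^{-\alpha k}(\int_{2^k}^{2^{k+1}}|f''|^2 s^{3+2\alpha})^{1/2}=2^{\alpha k}(\int_{2^k}^{2^{k+1}}|f''|^2 s^{3-2\alpha})^{1/2}$, Cauchy--Schwarz in $k$ against $2^{\mp\alpha k}$ yields $\lesssim\alpha^{-1/2}\min(\|x^{3/2-\alpha}f''\|_2,\|x^{3/2+\alpha}f''\|_2)\le\alpha^{-1/2}\sqrt{\|x^{3/2-\alpha}f''\|_2\|x^{3/2+\alpha}f''\|_2}$; the staircase part is handled in the same spirit.

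The main obstacle, I expect, is the interaction between scales --- controlling the non-frequency-localized ``staircase'' contributions, and the logarithmic losses incurred at the transitional radii $1-|z|\sim 2^{-k}$. It is precisely to absorb these losses, uniformly in $\alpha\in(0,\tfrac12]$, that the statement carries the factor $\alpha^{-1/2}$ and uses the two weights $x^{3/2\mp\alpha}$ in place of the single endpoint weight $x^{3/2}$; making this bookkeeping close is the delicate part.
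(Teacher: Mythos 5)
Your skeleton (Peller, a dyadic decomposition, then a double Cauchy--Schwarz with weights $2^{\pm\alpha k}$ to produce $\alpha^{-1/2}$ and the geometric mean) is the right one, and is in spirit what the paper does; but the per-block estimates in your step (i)/(ii) carry logarithmic losses that are genuinely there and that your final bookkeeping cannot absorb. Concretely, take $f''=b_K=2^{-K}\chi_{[2^K,2^{K+1}]}$ for a single large $K$ (so $f$ is bounded, $f(\infty)=0$). Then the coefficients of your frequency-localized piece $\Phi_K^{(i)}$ ramp down from $\approx 1$ at $n=2^K$ to $0$ at $n=2^{K+1}$ but have a sharp one-sided cutoff at $n=2^K$, so $\|\Phi_K^{(i)}\|_{L^1(\T)}\asymp K$, whereas your target $2^{K/2}\|b_K\|_2$ equals $1$; likewise the staircase piece is $(\int b_K)\sum_{n<2^K}z^n$, a Dirichlet kernel with $L^1$-norm $\asymp K$ (and summation by parts over $k$ only replaces it by $D_{2^k}-D_{2^{k-1}}$, which still has a sharp edge and an $L^1$-norm $\asymp k$). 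The two logarithms cancel only in the sum $\Phi_K$, so any argument that estimates the two pieces separately pays them. Your scheme therefore yields at best $\sum_k(1+k)\,2^{3k/2}\|b_k\|_2$, and in the single-block example this is $\asymp K\,2^{K}$ while $\alpha^{-1/2}\sqrt{\|x^{3/2-\alpha}f''\|_2\,\|x^{3/2+\alpha}f''\|_2}\asymp\alpha^{-1/2}2^{K}$: since $\alpha$ is fixed in the statement, the factor $K$ cannot be absorbed, so the bound you would obtain is strictly weaker than the theorem. A secondary slip is the last step: a single global Cauchy--Schwarz against $2^{\mp\alpha k}$ does not give $\alpha^{-1/2}\min(A,B)$, because $\sum_{k\ge 0}2^{2\alpha k}$ diverges; one must split the sum at an optimized scale $N$, pairing $2^{\alpha k}$ with the $x^{3/2-\alpha}$ norm for $k\le N$ and $2^{-\alpha k}$ with the $x^{3/2+\alpha}$ norm for $k>N$, which is exactly how $\sqrt{AB}$ arises. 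That part is routine to fix, but only once the logarithms are gone.

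The missing idea is to exploit the smoothness of the coefficients \emph{inside} each dyadic frequency block instead of cutting sharply and using $|g_s(\theta)|\lesssim|\theta|^{-1}$ tails. The paper does this by windowing the symbol $a_n=f(n)-f(n+1)$ with the (trapezoidal, hence smooth) de la Vall\'ee Poussin kernels $W_n$ and then using the elementary inequality
\[ \|\varphi\|_{L^1(\T)}\le \frac{2}{\sqrt\pi}\sqrt{\|\varphi\|_{L^2(\T)}\,\|(1-z)\varphi\|_{L^2(\T)}}, \]
in which $\|(1-z)\varphi\|_{L^2}$ is an $L^2$-quantified form of your summation by parts: for a smoothly windowed symbol it is of size $2^{-k}\|f\|_{\ell^2(I_k)}+\|f'\|_{L^2(I_k)}$, and the geometric mean then gives the clean $2^{k/2}$-type gain with no logarithm. (The paper first proves this with $f,f'$ data, then passes to $f''$ by the elementary inequalities $|f(0)-f(1)|\le\|f'\|_{L^1}\lesssim\alpha^{-1/2}A$ and $A\lesssim B$, much as you handle your boundary terms.) If you keep your representation $\gamma_n=\int_0^\infty f''(s)\min((s-n)_+,1)\,ds$, the analogous repair is to estimate the full block contribution (your staircase plus $\Phi_k^{(i)}$ together, or a smoothly windowed version of it) by such a size--smoothness geometric mean, rather than by the triangle inequality applied to pieces with sharp frequency cutoffs.
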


After we communicated to him the aforementioned result, Narutaka Ozawa asked us whether the same holds for other length functions on ${\mathbb F}_n$, or more generally for an arbitrary finitely generated hyperbolic group. To our surprise, the answer is yes. Indeed we can extend to hyperbolic graphs the sufficient condition from \cite{W95,HSS10} for a radial multiplier to be bounded. This is a consequence of \cite{O08}.
\begin{thm}\label{hyperbolicgraph}
Let $\Gamma$ be a hyperbolic graph with bounded degree. Then there is a constant $C$ such that for every $\dot{\phi}\colon\N \to \C$ such
that the matrix
\[ H=(\dot{\phi}(j+k)-\dot{\phi}(j+k+1))_{j,k \geq 0}\]
belongs to the trace class $S^1$, then $\lim_n \dot{\phi}(n)$ exists and the map
\[ A= (a_{x,y}) \in B(\ell^2 (\Gamma)) \mapsto
(\dot{\phi}(d(x,y))a_{x,y}) \in B(\ell^2 (\Gamma))\]
is bounded with norm less than $C \|H\|_{S^1} + \lim_n |\dot\phi(n)|$.
\end{thm}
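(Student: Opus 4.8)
\emph{Proof idea.} The strategy is to run, inside the hyperbolic graph $\Gamma$, the argument of Wysocza\'nski and of Haagerup--Steenstrup--Szwarc for trees, using Ozawa's construction from \cite{O08} in place of the (exact) tree structure. \emph{Step 1 (reduction to $\lim_n\dot\phi(n)=0$).} Put $c_m=\dot\phi(m)-\dot\phi(m+1)$, so that $H=(c_{j+k})_{j,k\ge0}$. The symbol of a trace-class Hankel matrix is absolutely summable --- this follows from Peller's theorem (the Besov space $B_1^1$ embeds into the Wiener algebra $A(\T)$) or from a direct Fej\'er-kernel estimate --- so $\sum_m|c_m|\le C'\|H\|_{S^1}$, and in particular $\dot\phi(n)=\dot\phi(0)-\sum_{m<n}c_m$ converges, to a number $\dot\phi(\infty)$. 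The constant kernel $1$ is the identity Schur multiplier, of norm $1$, and $\dot\phi-\dot\phi(\infty)$ tends to $0$ and still produces the matrix $H$; so it is enough to show that if moreover $\dot\phi(\infty)=0$, then the radial multiplier attached to $\dot\phi$ has norm $\le C\|H\|_{S^1}$.

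\emph{Step 2 (combinatorial identity and assembly).} With $\dot\phi(\infty)=0$ one has $\dot\phi(n)=\sum_{m\ge n}c_m$ and $\sum_m|c_m|<\infty$, hence for all $x,y\in\Gamma$ the absolutely convergent identity
\[ \dot\phi(d(x,y))\;=\;\sum_{i,j\ge0}\frac{c_{i+j}}{i+j+1}\,\mathbf 1_{\{d(x,y)\le i+j\}},\]
since each anti-diagonal $\{i+j=m\}$ contributes $m+1$ copies of $\tfrac{c_m}{m+1}\mathbf 1_{\{m\ge d(x,y)\}}$. I would combine this with the singular value decomposition $H=\sum_k|a^k\rangle\langle b^k|$, normalised so that $\sum_k\|a^k\|_2^2=\sum_k\|b^k\|_2^2=\|H\|_{S^1}$ and $c_{i+j}=\sum_k a^k_i\overline{b^k_j}$, and with maps $\xi_i,\eta_j\colon\Gamma\to\mathcal H$ into a Hilbert space realising the kernel above, i.e.\ satisfying a relation of the form $\langle\xi_i(x),\eta_j(y)\rangle=\tfrac1{i+j+1}\mathbf 1_{\{d(x,y)\le i+j\}}$ (or a suitable variant; see Step~3), and for which, for a constant $C_0$ depending only on the hyperbolicity constant and the degree, $\sup_x\|\sum_i a_i\xi_i(x)\|\le\sqrt{C_0}\,\|a\|_2$ and $\sup_y\|\sum_j b_j\eta_j(y)\|\le\sqrt{C_0}\,\|b\|_2$ for all $a,b\in\ell^2$. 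Setting $A_k(x)=\sum_i a^k_i\xi_i(x)$, $B_k(y)=\sum_j b^k_j\eta_j(y)$ and interchanging the (absolutely convergent) sums yields
\[ \dot\phi(d(x,y))=\sum_k\langle A_k(x),B_k(y)\rangle=\Big\langle\,\bigoplus_k A_k(x),\ \bigoplus_k B_k(y)\,\Big\rangle,\]
with $\sup_x\big\|\bigoplus_k A_k(x)\big\|^2=\sup_x\sum_k\|A_k(x)\|^2\le C_0\sum_k\|a^k\|_2^2=C_0\|H\|_{S^1}$, and similarly for $B$. Hence the radial multiplier has (completely) bounded norm $\le C_0\|H\|_{S^1}$, which is the assertion with $C=C_0$.

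\emph{Step 3 (the input from \cite{O08}, and the main difficulty).} What remains is the construction of the maps $\xi_i,\eta_j$: a Hilbert space together with families $\xi_i,\eta_j\colon\Gamma\to\mathcal H$ realising, as a Schur multiplier, the kernel of the Step~2 identity, and such that for each $x$ (resp.\ $y$) the $\xi_i(x)$ (resp.\ the $\eta_j(y)$) form a Riesz system with a bound $C_0$ independent of $x,y$ and of $\Gamma$. On a tree this amounts to elementary bookkeeping with balls, spheres and half-trees --- essentially because geodesics are unique, the diagonal Gram matrices that come out being comparable to the Hilbert matrix $(\tfrac1{i+j+1})_{i,j}$, which is bounded. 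The content of \cite{O08} is precisely that the same data can be manufactured on an arbitrary hyperbolic graph of bounded degree: the failure of uniqueness of geodesics is controlled by $\delta$-thinness of geodesic triangles, so the relevant "branch'' and "confluence'' data are well defined up to an error depending only on $\delta$, which is then absorbed into $C_0$. I expect the genuinely delicate point to be exactly this --- extracting from Ozawa's argument the Hilbert-space-valued statement in the form needed, and in particular the uniform Riesz bound, the twisted analogue of the boundedness of the Hilbert matrix; the rest is routine.
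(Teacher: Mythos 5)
Your proposal is correct and follows essentially the same route as the paper: factor the Hankel matrix (the paper uses $H=A^*B$ with $\|A\|_{S_2}\|B\|_{S_2}=\|H\|_{S^1}$ rather than the SVD) and pair it with Ozawa's vectors to exhibit the radial kernel as a Grothendieck factorization $\langle P(x),Q(y)\rangle$ with uniformly bounded factors. The input you defer to Step 3 is exactly \cite[Proposition 10]{O08} as quoted in the paper: vectors $\eta_i^{\pm}(x)$ with $\|\eta_i^{\pm}(x)\|\le\sqrt{C_0}$, with $\eta_i^{\pm}(x)\perp\eta_j^{\pm}(x)$ for $|i-j|\ge 2$ (which gives your uniform Riesz bound, with constant $3C_0$), and the anti-diagonal identity $\sum_{i+j=n}\langle\eta_i^+(x),\eta_j^-(y)\rangle=1$ iff $d(x,y)\le n$ and $=0$ otherwise --- this is precisely your ``suitable variant,'' and it suffices because the Hankel coefficients depend only on $i+j$.
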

In view of this theorem it is natural to wonder whether there are non hyperbolic groups also satisfying the same criterion. It turns out that there are, namely the groups $\Z^d$ ($d \geq 1$) for their standard generating set. This follows from some delicate estimates of $L^1(\R^d/\Z^d)$-norms that we prove in Section~\ref{section=Zd}.

The main result of this paper can be summarized as
\begin{thm}\label{main-result} Let $\Gamma$ be a finitely generated hyperbolic group, and $|\cdot|$ the word length associated to a fixed finite generating set. Then there is a constant $C$ such that for every $r>0$, and every $z \in \C$ with positive real part, the multiplier $S^r_z\colon\lambda_g \mapsto e^{-z |g|^r} \lambda_g$ is completely bounded on $\mathcal L(\Gamma)$ with norm less than 
\[C(1+|\tan(\arg z)|)^{3/2} (1+r).\]
\end{thm}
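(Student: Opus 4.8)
The plan is to reduce the statement to a trace-class estimate for a single Hankel matrix, which is then supplied by Theorem~\ref{R+}.

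First I would fix a finite symmetric generating set of $\Gamma$ and let $X$ be the corresponding Cayley graph; it has bounded degree and is Gromov hyperbolic, this last property not depending on the choice of finite generating set of the hyperbolic group $\Gamma$. Write $d$ for the graph (word) metric, so that $d(x,y)=|x^{-1}y|$. For $r>0$ and $z\in\C$ with $\mathrm{Re}\,z>0$ put $\dot\phi=\dot\phi_{r,z}\colon\N\to\C$, $\dot\phi(n)=e^{-zn^{r}}$; then $(x,y)\mapsto\dot\phi(d(x,y))$ is a $\Gamma$-invariant Herz--Schur symbol, and by the standard identification of the completely bounded norm of a Fourier multiplier on $\mathcal L(\Gamma)$ with the norm of the associated Schur multiplier on $B(\ell^2(X))$ (see e.g.\ \cite{BN08}), $\|S^r_z\|_{cb}$ equals the norm of $A=(a_{x,y})\mapsto(\dot\phi(d(x,y))a_{x,y})$. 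Since $\mathrm{Re}\,z>0$ and $r>0$ we have $|\dot\phi(n)|=e^{-(\mathrm{Re}\,z)n^{r}}\to 0$, so Theorem~\ref{hyperbolicgraph} applies and yields a constant $C$ depending only on $\Gamma$ with
\[ \|S^r_z\|_{cb}\ \le\ C\,\bigl\|H_{r,z}\bigr\|_{S^1},\qquad H_{r,z}=\bigl(\dot\phi(j+k)-\dot\phi(j+k+1)\bigr)_{j,k\ge0}. \]
Everything is thus reduced to bounding $\|H_{r,z}\|_{S^1}$ by $C'(1+|\tan(\arg z)|)^{3/2}(1+r)$.

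Next I would apply Theorem~\ref{R+} to $f=f_{r,z}$, $f_{r,z}(x)=e^{-zx^{r}}$: it is continuous and bounded on $[0,\infty)$ (its modulus $e^{-(\mathrm{Re}\,z)x^{r}}$ tends to $0$ at infinity), it is $C^{\infty}$ on $(0,\infty)$, and the differences $f_{r,z}(j+k)-f_{r,z}(j+k+1)$ are exactly the entries of $H_{r,z}$; the real-valuedness hypothesis is handled by applying the theorem to $\mathrm{Re}\,f$ and $\mathrm{Im}\,f$ separately, at the cost of a factor $2$, using $|(\mathrm{Re}\,f)''|,|(\mathrm{Im}\,f)''|\le|f''|$. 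Here
\[ f''(x)=e^{-zx^{r}}\bigl(z^{2}r^{2}x^{2r-2}-z\,r(r-1)x^{r-2}\bigr), \]
and the weighted integrals $\|x^{3/2\mp\alpha}f''\|_{L^2(\R_+)}^{2}=\int_0^\infty x^{3\mp 2\alpha}|f''(x)|^{2}\,dx$ that enter Theorem~\ref{R+} are evaluated by the substitution $t=2(\mathrm{Re}\,z)x^{r}$, which turns each into a sum of two terms, each a $\Gamma$-value times a power of $\mathrm{Re}\,z$ times a power of $|z|$. One then selects the free parameter $\alpha\in(0,\tfrac12]$ so that the integrals converge at the origin (which forces $\alpha<r$) and so that $\alpha/r$ stays $\le\tfrac12$ (keeping the $\Gamma$-factors bounded absolute constants); $\alpha=\tfrac12\min(r,1)$ works. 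Inserting $|z|=(\mathrm{Re}\,z)\sec(\arg z)$ and $\sec\theta\asymp 1+|\tan\theta|$, and noting by a scaling check ($f_{r,z}(\lambda\,\cdot)=f_{r,\lambda^{r}z}$, and the bound produced by Theorem~\ref{R+} is unchanged under $f\mapsto f(\lambda\,\cdot)$) that the final estimate can only depend on $\arg z$ and $r$ — so the powers of $\mathrm{Re}\,z$ cancel — one reaches a bound of the shape $C(1+|\tan(\arg z)|)^{p}(1+r)^{q}$.

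The crux is to obtain the sharp exponents $p=3/2$ and $q=1$, which the crude termwise majorization of $|f''|$ above does not give: it overcounts the contribution of the oscillatory factor $e^{-i(\mathrm{Im}\,z)x^{r}}$ and of the transition region near $x=(\mathrm{Re}\,z)^{-1/r}$. This is exactly where the full strength of the precise Theorem~\ref{R+} (the optimization over $\alpha$ and its lower-order terms) is needed, and I expect it to require treating the ``bulk'' $x\lesssim(\mathrm{Re}\,z)^{-1/r}$, on which $f_{r,z}$ behaves like the indicator of an interval and which is responsible for the linear factor $1+r$, separately from the rapidly oscillating tail, which is responsible for the factor $(1+|\tan(\arg z)|)^{3/2}$; this sharpening is the main obstacle. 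Once the cb-norm bound is established, the remaining assertions are routine: the semigroup identity $S^r_zS^r_w=S^r_{z+w}$ holds on $\mathrm{span}\{\lambda_g\}$, and weak-$*$ continuity and analyticity of $z\mapsto S^r_z$ follow from the pointwise behaviour of $z\mapsto e^{-zn^{r}}$ on the generators $\lambda_g$ together with the uniform cb-bound just proved on each sub-sector $\{|\arg z|\le\tfrac\pi2-\varepsilon\}$ and the weak-$*$ density of $\mathrm{span}\{\lambda_g\}$ in $\mathcal L(\Gamma)$.
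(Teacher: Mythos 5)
Your reduction is exactly the paper's: pass to the Schur multiplier with symbol $\dot\phi(d(x,y))$, invoke Theorem \ref{hyperbolicgraph} (Corollary \ref{hyperbolicgroup}) with $\lim_n\dot\phi(n)=0$, and then bound $\|(e^{-z(j+k)^r}-e^{-z(j+k+1)^r})_{j,k}\|_{S^1}$ via Theorem \ref{R+} applied to $f(x)=e^{-zx^r}$ with $\alpha=\tfrac12\min(r,1)$, the substitution $s=(\mathrm{Re}\,z)x^r$, and the observation that the powers of $\mathrm{Re}\,z$ cancel. Up to that point the proposal is sound (including the harmless remark about splitting into real and imaginary parts).

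The genuine gap is that the one step carrying all the quantitative content — the actual derivation of the bound $C(1+|\tan(\arg z)|)^{3/2}(1+r)$ — is not carried out, and your assessment of why it is hard is mistaken. You assert that the crude termwise majorization $|f'|\le |z| r x^{r-1}e^{-ax^r}$, $|f''|\le |z|^2r^2x^{2r-2}e^{-ax^r}+|z| r|r-1| x^{r-2}e^{-ax^r}$ (with $|z|\le\sqrt{1+\tan^2\omega}\,a$, $a=\mathrm{Re}\,z$) ``does not give'' the exponents $3/2$ and $1$, and that one must additionally separate a bulk region from the oscillatory tail. In fact no such splitting is needed: this is exactly what the paper does in Example \ref{ex=heat_sgp_complex}, and it suffices, provided one uses the sharper form $\frac{C}{\sqrt\alpha}\sqrt{AB}$ of Theorem \ref{R+} rather than $\frac{2C}{\sqrt\alpha}B$. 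The point you missed is the role of the first-derivative quantity $A$: with $K=1+\tan^2\omega$ one gets, after the change of variables and the geometric mean over the $\pm\alpha$ weights (which kills the $a^{\mp 2\alpha/r}$ factors), $A^2\lesssim Kr$ and $B^2\lesssim K^2r^3+Kr(r-1)^2$, whence $\frac{1}{\sqrt\alpha}\sqrt{AB}\lesssim K^{3/4}(1+r)\le(1+|\tan\omega|)^{3/2}(1+r)$ for $r\ge1$, and $\frac{1}{\sqrt\alpha}\sqrt{AB}\lesssim r^{-1/2}(K^{3/4}r+K^{1/2}r^{1/2})\lesssim K^{3/4}$ for $r<1$. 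Using only $B$, as your worry implicitly does, would indeed give the worse $(1+|\tan\omega|)^{2}r^{3/2}$; the exponent $3/2$ comes from interpolating with $A$, which carries only one power of $|z|$, not from any finer analysis of the oscillation or of the transition region. As written, your argument stops at declaring this ``the main obstacle'' without resolving it, so the proof of the stated bound is incomplete.
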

The dependence on the constant on the argument of $z$ is probably not optimal, but the order $r$ is sharp as $r \to \infty$ (see Example \ref{ex=heat_sgp_real}).

Our method applies to other radial multipliers. In Example \ref{Riesz} we show that the Bochner-Riesz multipliers $\lambda_g \mapsto (1-\frac{|g|^2}{N^2})^z\chi_{\{|g|\leq N\}} \lambda_g$ are completely bounded on the noncommutative $L^p$ spaces associated with ${\cal L}(\Gamma)$ uniformly in $N$ if   $|\frac 2p-1|< \mathrm{Re}(z)$, for any $1\leq p\leq \infty$ and any finitely generated hyperbolic group $\Gamma$. The same result holds for the F\'ejer-type multipliers  $\lambda_g \mapsto (1-\frac{|g|}{N})^z\chi_{\{|g|\leq N\}} \lambda_g$.


This article is organized as follows. In Section \ref{section=multipliers} we recall facts on Schur multipliers and prove Theorem \ref{hyperbolicgraph}. Section \ref{section=trace_class_estimates} contains estimates for the Schatten $1$-norm of Hankel matrices and the proof of Theorem \ref{main-result}. Section \ref{section=Zd} contains of proof that $\Z^d$ satisfies the conclusion of Theorem \ref{hyperbolicgraph}. In Section \ref{section=motivation}, we explain a motivation in studying $S_t^r$ and prove an end-point result on $H_\Sigma^\infty$-calculus.

{\bf Notation:} We denote by $\N$ the set of nonnegative integers including $0$~: $\N =\{0,1,2,\dots\}$. We denote by $S^1$ the Banach space of trace class operators on $\ell^2(\N)$. For a discrete group $\Gamma$ we denote by $L^p(\hat{\Gamma})$ the non commutative $L^p$ space on the von Neumann algebra of $\Gamma$.

\section{Radial multipliers on hyperbolic graphs}\label{section=multipliers}

\subsection{Reminders on Schur and Fourier multipliers}

We start with some reminders. If $X$ is a set, a function $\varphi\colon X \times X \to \C$ is a \emph{Schur multiplier} if the map $(a_{s,t})_{s,t \in X} \in B(\ell^2(X)) \mapsto (\varphi(s,t) a_{s,t})_{s,t \in X}$, denoted $M_\varphi$, is bounded on $B(\ell^2(X))$. The following proposition, which is essentially due to Grothendieck (\cite[Theorem 5.1]{Pis01}), characterizes the Schur multipliers.

\begin{prop}\label{Schur} Let $X$ be a nonempty set and assume that $\varphi: X\times X\mapsto \C$ and $C\geq0$ are given, then the following are equivalent:

(i) $\varphi(x,y)$ extends to a Schur multiplier on $B(\ell^2(X))$ with norm $\leq C$.

(ii) There exists a Hilbert space $H$ and two bounded, continuous maps $P,Q:\Gamma\mapsto H$ such that
$$\varphi(x,y)=\langle P(x),Q(y)\rangle$$ 
and $\|P\|_\infty\|Q\|_\infty\leq C$, where
$$\|P\|_\infty=\sup_{x\in X}\|P(x)\|, \|Q\|_\infty=\sup_{x\in X}\|Q(x)\|.$$
\end{prop}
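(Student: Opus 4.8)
The implication $(ii)\Rightarrow(i)$ is the elementary half, and I would dispatch it by a direct expansion. Fix an orthonormal basis $(e_i)_i$ of $H$, write $P(x)=\sum_i p_i(x)e_i$ and $Q(y)=\sum_i q_i(y)e_i$ so that $\varphi(x,y)=\sum_i p_i(x)\overline{q_i(y)}$, and for $a=(a_{x,y})$ a finitely supported matrix with $\|a\|_{B(\ell^2(X))}\le 1$ and $\xi,\eta\in\ell^2(X)$ finitely supported, reorganize the double sum as
$$\langle M_\varphi(a)\xi,\eta\rangle=\sum_i\langle a\zeta_i,\theta_i\rangle,\qquad \zeta_i(y)=\overline{q_i(y)}\,\xi(y),\ \ \theta_i(x)=\overline{p_i(x)}\,\eta(x).$$
Two applications of Cauchy--Schwarz then give $|\langle M_\varphi(a)\xi,\eta\rangle|\le\|a\|\,(\sum_i\|\zeta_i\|^2)^{1/2}(\sum_i\|\theta_i\|^2)^{1/2}$, and since $\sum_i\|\zeta_i\|^2=\sum_y\|Q(y)\|^2|\xi(y)|^2\le\|Q\|_\infty^2\|\xi\|^2$ and likewise $\sum_i\|\theta_i\|^2\le\|P\|_\infty^2\|\eta\|^2$, one obtains $\|M_\varphi\|\le\|P\|_\infty\|Q\|_\infty\le C$ after passing to the norm closure; in particular $M_\varphi$ is well defined and bounded on all of $B(\ell^2(X))$.

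For $(i)\Rightarrow(ii)$ — the substantial half, which is the Grothendieck-type factorization of \cite[Theorem 5.1]{Pis01} — I would proceed as follows. First reduce to the case of a finite set: if every finite $F\subseteq X$ admits maps $P_F,Q_F\colon F\to H_F$ with $\varphi|_{F\times F}(x,y)=\langle P_F(x),Q_F(y)\rangle$ and $\|P_F\|_\infty,\|Q_F\|_\infty\le\sqrt C$, then the Hilbert-space ultraproduct of the $H_F$ along an ultrafilter refining the order filter on the directed set of finite subsets yields $P,Q\colon X\to H$ with $\varphi(x,y)=\langle P(x),Q(y)\rangle$ and $\|P\|_\infty\|Q\|_\infty\le C$ (continuity being automatic when $X$ is discrete, as in our applications). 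For finite $X$, a Gram/Cholesky argument shows that the existence of such a factorization is equivalent to the solvability of the semidefinite feasibility problem: there exist Hermitian $D_1,D_2$ with $\operatorname{diag}(D_1),\operatorname{diag}(D_2)\le C$ and $\left(\begin{smallmatrix}D_1&\Phi^*\\\Phi&D_2\end{smallmatrix}\right)\ge 0$, where $\Phi=(\varphi(x,y))_{x,y\in X}$. Assuming this is infeasible, I would separate the (closed convex) set of such matrices $\left(\begin{smallmatrix}D_1&\Phi^*\\\Phi&D_2\end{smallmatrix}\right)$ from the self-dual cone of positive matrices by a Hahn--Banach functional, represented by a positive matrix $T=\left(\begin{smallmatrix}T_{11}&R\\R^*&T_{22}\end{smallmatrix}\right)$. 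Finiteness of the separating supremum forces $T_{11},T_{22}$ to be diagonal, say $T_{11}=\operatorname{diag}(s)$, $T_{22}=\operatorname{diag}(t)$, and positivity lets one write $R=\operatorname{diag}(\sqrt s)\,K\,\operatorname{diag}(\sqrt t)$ with $\|K\|\le 1$ (discarding indices where $s_x$ or $t_y$ vanishes); the separation inequality then reads $C\big(\sum_x s_x+\sum_y t_y\big)\le 2|\operatorname{Tr}(R\Phi)|$. On the other hand, taking $\xi(x)=\sqrt{s_x}$ and $\eta(y)=\sqrt{t_y}$ one checks $\operatorname{Tr}(R\Phi)=\langle M_\Phi(K^{T})\xi,\eta\rangle$, so hypothesis $(i)$ gives $|\operatorname{Tr}(R\Phi)|\le C\,(\sum_x s_x)^{1/2}(\sum_y t_y)^{1/2}\le\tfrac C2\big(\sum_x s_x+\sum_y t_y\big)$, contradicting the previous inequality (strictly, once the separation is strict).

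The main obstacle is exactly the last parenthetical: the convex set being separated in the finite case is unbounded, so one must justify that the separating functional can be chosen with a \emph{strictly} negative supremum over it — equivalently, one must set up the conic/SDP duality so that strong duality holds. I would handle this by intersecting with large balls and passing to a limit, or by quoting a standard conic duality theorem. Alternatively, and perhaps more cleanly, one can bypass the explicit duality step and deduce $(i)\Rightarrow(ii)$ from the Wittstock--Paulsen--Haagerup factorization of completely bounded maps applied to $M_\varphi$ (which is automatically completely bounded with $\|M_\varphi\|_{cb}=\|M_\varphi\|$): writing $M_\varphi(\cdot)=V^*\pi(\cdot)W$ with $\|V\|\|W\|\le\|M_\varphi\|$, evaluating on the matrix units $e_{x,y}$, and using that the $\pi(e_{x,x})$ are mutually orthogonal projections, one reads off $\varphi(x,y)=\langle \pi(e_{x,y})\,\pi(e_{y,y})W\delta_y,\ \pi(e_{x,x})V\delta_x\rangle$ and extracts the required bounded maps $P,Q$.
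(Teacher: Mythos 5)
The paper itself gives no proof of this proposition: it is quoted as (essentially) Grothendieck's theorem with a citation to \cite[Theorem 5.1]{Pis01}, so you are supplying an argument where the paper supplies none. Your direction (ii)$\Rightarrow$(i) is correct and standard; one small repair: ``passing to the norm closure'' of finitely supported matrices only reaches the compact operators, but this is unnecessary, since your estimate $|\langle M_\varphi(a)\xi,\eta\rangle|\le \|a\|\,\|P\|_\infty\|Q\|_\infty\|\xi\|\|\eta\|$ holds verbatim for an arbitrary $a\in B(\ell^2(X))$ tested against finitely supported $\xi,\eta$ (only finitely many entries of $M_\varphi(a)$, defined entrywise, are involved), which already gives boundedness on all of $B(\ell^2(X))$.

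The factorization direction (i)$\Rightarrow$(ii) is, however, not complete in either of your routes. In the separation route the difficulty you flag is genuine and is not disposed of by what you write: with only proper (non-strict) separation your two estimates close up to the chain $C(\sum_x s_x+\sum_y t_y)\le 2|\mathrm{Tr}(R\Phi)|\le C(\sum_x s_x)^{1/2}(\sum_y t_y)^{1/2}\cdot 2\le C(\sum_x s_x+\sum_y t_y)$, i.e.\ equality and no contradiction; and ``intersecting with large balls'' makes the separating functional depend on the ball, so a further limiting argument is needed. The standard repair is to run your argument to prove feasibility for every $C'>C$ (then the strict gap $C<C'$ does yield a contradiction, the degenerate case $s=t=0$ being excluded since positivity then forces $R=0$, i.e.\ $T=0$), and to recover the constant $C$ itself for finite $X$ by compactness of the feasible set (bounded diagonals plus positivity bound all entries) before the ultraproduct step; none of this is carried out. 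In the alternative route there are two gaps: first, the input $\|M_\varphi\|_{cb}=\|M_\varphi\|$ is essentially the content of the proposition and is usually \emph{deduced} from the factorization (ii), so quoting it is circular unless you invoke an independent proof such as Smith's lemma that a bounded bimodule map over the diagonal masa is automatically completely bounded with the same norm; second, your displayed identity $\varphi(x,y)=\langle \pi(e_{x,y})\pi(e_{y,y})W\delta_y,\ \pi(e_{x,x})V\delta_x\rangle$ does not yet separate the variables, because $\pi(e_{x,y})$ depends on both $x$ and $y$. The missing step is to fix a base point $o\in X$ and use $e_{x,y}=e_{x,o}e_{o,y}$, which gives (up to conjugation conventions) $\varphi(x,y)=\langle \pi(e_{o,y})W\delta_y,\ \pi(e_{o,x})V\delta_x\rangle$, so that $P(x)=\pi(e_{o,x})V\delta_x$ and $Q(y)=\pi(e_{o,y})W\delta_y$ satisfy $\|P\|_\infty\|Q\|_\infty\le\|V\|\|W\|\le C$. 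With these repairs either route gives a valid proof; as written, the hard half has genuine gaps.
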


Let $\Gamma$ be a discrete group and $\phi \colon \Gamma \to \C$. The \emph{Fourier multiplier} $\lambda_g\mapsto \phi(g)\lambda_g$ is completely bounded on the group von Neumann algebra $\mathcal L(\Gamma)$ if and only if the associated function $\varphi(g,h)=\phi(h^{-1}g)$ is a Schur multiplier. In this case the c.b. norm of the Fourier multiplier is equal to the norm (and also the c.b. norm) of the Schur multiplier (see \cite{BF84}). 

Let us state an immediate consequence of Proposition \ref{Schur} that will be used later (see \cite[Theorem 6.1]{Pis01} for a complete characterization of Hankelian Schur mutlipliers).
\begin{lemma}\label{lemma=HankelSchur} Let $(a_n)_{n \in \Z}$ be a finitely supported sequence of complex numbers. Then for each matrix $B = (b_{j,k})_{j,k \in \N} \in S^1$, 
\[ \|(a_{j+k} b_{j,k})_{j,k \in \N}\|_{S^1} \leq \left(\int_0^1 |\sum_{n \in \Z} a_n e^{2i\pi n \theta}| d\theta \right)\|B\|_{S^1}.\]
\end{lemma}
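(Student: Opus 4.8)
The plan is to realize the Hankel-type Schur multiplier $\psi(j,k) = a_{j+k}$ as an average of rank-one (i.e., product-form) Schur multipliers coming from the Fourier expansion of the trigonometric polynomial $a(\theta) = \sum_{n\in\Z} a_n e^{2i\pi n\theta}$, and then combine this with the trivial observation that if $\psi(j,k) = \overline{P(j)}Q(k)$ is a product of a function of $j$ times a function of $k$, then Schur multiplication by $\psi$ acts on $S^1$ with norm at most $\|P\|_\infty \|Q\|_\infty$ — indeed in that case $M_\psi(B) = D_{\overline P} B D_Q$ where $D_f$ is the diagonal operator with entries $f$, and $\|D_{\overline P} B D_Q\|_{S^1} \leq \|D_{\overline P}\|_\infty \|B\|_{S^1}\|D_Q\|_\infty$.

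First I would write, for each fixed $\theta \in [0,1]$, the phase matrix $e^{2i\pi(j+k)\theta} = e^{2i\pi j\theta}\cdot e^{2i\pi k\theta}$, which is exactly of product form with both factors of modulus $1$; hence Schur multiplication by $(e^{2i\pi(j+k)\theta})_{j,k}$ is an isometry of $S^1$. Next, since $a_n = \int_0^1 a(\theta) e^{-2i\pi n\theta}\,d\theta$, I get the pointwise identity
\[
a_{j+k} = \int_0^1 a(\theta)\, e^{2i\pi(j+k)\theta}\,d\theta,
\]
valid for all $j,k$ (the sum defining $a(\theta)$ is finite, so there are no convergence issues). Therefore, for any $B = (b_{j,k}) \in S^1$,
\[
\bigl(a_{j+k} b_{j,k}\bigr)_{j,k}
= \int_0^1 a(\theta)\, \bigl(e^{2i\pi(j+k)\theta} b_{j,k}\bigr)_{j,k}\,d\theta,
\]
an $S^1$-valued integral (Bochner integral) of the family $\theta \mapsto a(\theta)\, D_{\theta} B D_{\theta}$ where $D_\theta = \mathrm{diag}(e^{2i\pi j\theta})_j$. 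Taking $S^1$-norms and using the triangle inequality for the Bochner integral together with $\|a(\theta) D_\theta B D_\theta\|_{S^1} = |a(\theta)|\,\|B\|_{S^1}$ gives
\[
\bigl\|(a_{j+k} b_{j,k})_{j,k}\bigr\|_{S^1}
\leq \left(\int_0^1 |a(\theta)|\,d\theta\right)\|B\|_{S^1},
\]
which is the claim.

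The only point requiring a little care is the justification that the integrand $\theta \mapsto a(\theta)\,(e^{2i\pi(j+k)\theta}b_{j,k})_{j,k}$ is a well-defined $S^1$-valued Bochner-integrable function whose integral is computed entrywise; since $a$ is a trigonometric polynomial the map is norm-continuous on the compact interval $[0,1]$ (continuity of $\theta \mapsto D_\theta$ in operator norm is clear, as $D_\theta$ is unitary and $\theta \mapsto D_\theta B$ is $S^1$-continuous because finite-rank, hence all, $B$ are dense), so Bochner integrability is automatic and commuting the integral with the entry-evaluation functionals is legitimate. This is genuinely routine, so I expect no real obstacle; the content of the lemma is simply the factorization $e^{2i\pi(j+k)\theta} = e^{2i\pi j\theta} e^{2i\pi k\theta}$ combined with the ideal property of $S^1$ under multiplication by bounded diagonal matrices.
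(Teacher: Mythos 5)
Your argument is correct, but it proves the lemma by a different route than the paper. You represent the Hankel symbol as an $L^1$-average of product symbols, $a_{j+k}=\int_0^1 a(\theta)e^{-2i\pi(j+k)\theta}\,d\theta$ (note the sign: with your convention $a_n=\int_0^1 a(\theta)e^{-2i\pi n\theta}d\theta$ the exponent should be $-2i\pi(j+k)\theta$, a harmless slip fixed by taking $D_\theta=\mathrm{diag}(e^{-2i\pi j\theta})$), and then use that each $B\mapsto D_\theta BD_\theta$ is an $S^1$-isometry, so the triangle inequality for the vector-valued integral gives the bound $\|a\|_{L^1}\|B\|_{S^1}$ directly on $S^1$. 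The paper instead factorizes the symbol function multiplicatively, $f=gh$ with $\|g\|_{L^2}\|h\|_{L^2}=\|f\|_{L^1}$, writes $a_{j+k}=\langle ge^{-2i\pi j\theta},\overline h e^{2i\pi k\theta}\rangle$, invokes the Grothendieck-type characterization of Schur multipliers (Proposition~\ref{Schur}) to bound the multiplier on $B(\ell^2)$ by $\|f\|_{L^1}$, and then passes to $S^1$ by duality. Your approach is more self-contained (no Proposition~\ref{Schur}, no duality step) and yields the $S^1$ estimate in one stroke; the paper's approach reuses machinery already set up for Theorem~\ref{hyperbolicgraph} and gives the $B(\ell^2)$ statement at the same time. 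One small caveat in your write-up: $\theta\mapsto D_\theta$ is \emph{not} operator-norm continuous (the supremum over $j\in\N$ of $|e^{2i\pi j\theta}-e^{2i\pi j\theta'}|$ does not tend to $0$), so that parenthetical claim should be dropped; but the density argument you give right after (continuity of $\theta\mapsto D_\theta BD_\theta$ in $S^1$ for finite-rank $B$, plus uniform boundedness) is the correct justification, and in fact one can also avoid Bochner integrals entirely by testing against finite-rank contractions and using trace duality.
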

\begin{proof} Write $f(\theta) = \sum_{n \in \Z} a_n e^{2i\pi n \theta}$ and decompose $f= gh$ a product of two $L^2$ functions with $\|g\|_{L^2} \|h\|_{L^2} = \|f\|_{L^1}$. We can write $a_{j+k} = \int f(\theta) e^{-2i\pi(j+k)\theta} = \int g(\theta) e^{-2i\pi j\theta} h(\theta) e^{-2i\pi k\theta}  = \langle g e^{-2i\pi j\theta}, \overline h e^{2i\pi k\theta} \rangle$. Proposition \ref{Schur} implies that $(j,k) \mapsto a_{j+k}$ is a Schur multiplier with norm less than $\|g\|_{L^2}\|h\|_{L^2}=\|f\|_{L^1}$ on $B(\ell^2)$. By duality $M_{a_{j+k}}$ is bounded on $S^1$ with norm $\leq \|f\|_{L^1}$, which is the content of the Lemma.
\end{proof}
\subsection{Radial multipliers on trees} 

Let $\Gamma$ be a group with a fixed finite generating set, and denote by $d$ the associated left-invariant distance on $\Gamma$, or equivalently the distance on the Cayley graph of $\Gamma$. Recall that $d(x,y) = |x^{-1} y|$ where $|\cdot|$ is the word-length with respect to the fixed generating set of $\Gamma$.

Following \cite{HSS10}, we say a Schur multiplier $\varphi$ on $ \Gamma\times \Gamma$ is {\it radial} if $\varphi(x,y)=\dot{\phi}(d(x,y))$ for some function $\dot{\phi}:\N \mapsto \C$. We say a Fourier multiplier $\lambda_x \mapsto {\phi}(x)\lambda_x$ is a {\it radial Fourier multiplier} if $\phi(x)=\dot{\phi}(|x|)$ for some $\dot{\phi}\colon \N \to \C$. By \cite{BF84}, the completely bounded norm of a radial Fourier multiplier associated with $\phi$  equals to the norm of the Schur multiplier $\dot{\phi}(d(x,y))$.

An exact formula for the norm of radial Schur multipliers on free groups was given by Haagerup and Szwarc in 1987. The result was published in \cite{HSS10} with Steenstrup, where they extended the study to homogenous trees. A similar characterization for radial Fourier multipliers with respect to the block length on free products of groups is proved by Wysocza\'nski in 1995 (\cite{W95}). These results were recently extended to (amalgamated) free products of operator algebras in \cite{HM12,M14,D13}. We rewrite a special version of the results from \cite{HSS10} below.

\begin{thm}\label{HSS}(Haagerup, Steenstrup, Szwarc)
Let $X$ be a homogenous tree with degree $\geq3$ and $\dot{\phi}:\N\rightarrow \C$. Then the function $\varphi(x,y) = \dot{\phi}(d(x,y))$ is a Schur multiplier if and only if the matrix
\[ H=(\dot{\phi}(j+k)-\dot{\phi}(j+k+2))_{j,k \geq 0}\]
belongs to the trace class. In that case $\lim_{n} \dot{\phi}(2n)$ and $\lim_{n} \dot{\phi}(2n+1)$ exist, and 
$$\|M_{\dot{\phi}}\| \leq \lim_{n\rightarrow\infty}(| \dot{\phi}(2n)|+|\dot{\phi}(2n+1)|)+\|H\|_{S^1}.$$
\end{thm}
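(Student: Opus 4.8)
The plan is to prove the two implications separately, the substantive one being ``if'', where I would also track the constants; the only tool needed is Grothendieck's factorisation criterion, Proposition~\ref{Schur}. So assume first $H=(\psi(j+k))_{j,k\ge0}\in S^1$, with $\psi(m)=\dot\phi(m)-\dot\phi(m+2)$. From the diagonal and first off-diagonal of $H$ one gets $\sum_m|\psi(m)|\le 2\|H\|_{S^1}$, hence $c_0:=\lim_n\dot\phi(2n)$ and $c_1:=\lim_n\dot\phi(2n+1)$ exist and $\dot\phi(n)=c_0+\sum_{k\ge0}\psi(n+2k)$ or $c_1+\sum_{k\ge0}\psi(n+2k)$ according to the parity of $n$. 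The ``constant at infinity'' part — the function sending $(x,y)$ to $c_0$ or $c_1$ according to the parity of $d(x,y)$ — I would dispose of using that the tree is bipartite: with $\epsilon\colon X\to\{\pm1\}$ a proper $2$-colouring it equals $\tfrac{c_0+c_1}{2}+\tfrac{c_0-c_1}{2}\epsilon(x)\epsilon(y)$, a Schur multiplier of norm at most $|c_0|+|c_1|=\lim_n(|\dot\phi(2n)|+|\dot\phi(2n+1)|)$. Everything then reduces to proving that $(x,y)\mapsto\sum_{k\ge0}\psi(d(x,y)+2k)$ is a Schur multiplier of norm $\le\|H\|_{S^1}$.

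For that I would fix a base vertex $o$, write $[o,x]=(x_0=o,x_1,\dots,x_{n_x}=x)$ for geodesics and let $p(x,y)$ be the length of the common initial segment of $[o,x]$ and $[o,y]$, so $d(x,y)=n_x+n_y-2p(x,y)$. Take a Schmidt decomposition $H=\sum_l\lambda_l\,u_l\otimes\bar v_l$ with $\lambda_l\ge0$, $\|u_l\|=\|v_l\|=1$, $\sum_l\lambda_l=\|H\|_{S^1}$; substituting $\psi(i+j)=\sum_l\lambda_l u_l(i)\overline{v_l(j)}$ and regrouping gives, with $p=p(x,y)$,
\[\sum_{k\ge0}\psi(d(x,y)+2k)=\sum_l\lambda_l\,\bigl\langle S^{*(n_x-p)}u_l,\ S^{*(n_y-p)}v_l\bigr\rangle,\]
$S$ being the unilateral shift on $\ell^2(\N)$. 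Next I would telescope in $p$: because $I-SS^*$ is the rank-one projection onto $\C e_0$, the inner product above equals $\langle S^{*n_x}u_l,S^{*n_y}v_l\rangle+\sum_{j=1}^{p}u_l(n_x-j)\overline{v_l(n_y-j)}$, and the key geometric fact that on a tree $\{\,j\ge1:x_j=y_j\,\}=\{1,\dots,p(x,y)\}$ turns the last sum into $\sum_{j\ge1}\mathbf 1[x_j=y_j]\,u_l(n_x-j)\overline{v_l(n_y-j)}$. This exhibits $\sum_{k\ge0}\psi(d(x,y)+2k)$ as $\langle P(x),Q(y)\rangle$ with $P(x)$ having components $\bigoplus_l\lambda_l^{1/2}S^{*n_x}u_l$ and, for $j\ge1$, $\bigl(\bigoplus_l\lambda_l^{1/2}u_l(n_x-j)\bigr)\otimes\delta_{x_j}$, inside $\bigl(\bigoplus_l\ell^2(\N)\bigr)\oplus\bigl(\bigoplus_{j\ge1}\ell^2(\N)\otimes\ell^2(X)\bigr)$, and $Q(y)$ given by the same formula with $u_l,x$ replaced by $v_l,y$. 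The crucial point is that the $\ell^2$-decay of the $u_l$ makes $P$ bounded, in fact with exactly the right norm: by Pythagoras,
\[\|P(x)\|^2=\sum_l\lambda_l\Bigl(\sum_{m\ge n_x}|u_l(m)|^2+\sum_{0\le m<n_x}|u_l(m)|^2\Bigr)=\sum_l\lambda_l\|u_l\|^2=\|H\|_{S^1}\]
for every $x$, and similarly $\|Q(y)\|^2=\|H\|_{S^1}$. Proposition~\ref{Schur} then gives the bound $\le\|H\|_{S^1}$, and adding the constant-at-infinity part produces the displayed estimate.

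For the ``only if'' direction I would restrict the Schur multiplier to configurations of geodesic rays: if $o=p_0,p_1,\dots$ and $o=q_0,q_1,\dots$ are geodesic rays leaving $o$ in distinct directions then $d(p_j,q_k)=j+k$, so that $\dot\phi(j+k)-\dot\phi(j+k+2)=\varphi(p_j,q_k)-\varphi(p_{j+1},q_{k+1})$, which after re-indexing pairs a finite section of $H$ against an arbitrary contraction composed with $B\mapsto B-SBS^*$. The hard part — and essentially the only place where ``homogeneous tree of degree $\ge 3$'' is genuinely used — is to upgrade this into a \emph{uniform} bound on the Schatten-$1$ norms of the finite sections of $H$: one must exploit the exponential branching of the tree (for instance by averaging the vectors $\delta_x$ over suitable portions of spheres) to realise an arbitrary operator of norm $\le1$ as a test element of controlled norm, and then use the duality between $S^1$ and the compact operators. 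I would follow \cite{HSS10} for this step, or alternatively extract it from Pisier's description of bounded Hankelian Schur multipliers \cite[Theorem~6.1]{Pis01}; the existence of $\lim_n\dot\phi(2n)$ and $\lim_n\dot\phi(2n+1)$ then follows again from $\sum_m|\psi(m)|<\infty$.
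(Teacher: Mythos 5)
Your ``if'' direction is correct and gives exactly the stated bound, but it is organized differently from the paper's argument. The paper fixes an infinite geodesic ray $p$ and attaches to each vertex $x$ the ray $p_x$ from $x$ flowing into $p$; writing $H=A^*B$ with $\|H\|_{S^1}=\|A\|_{S^2}\|B\|_{S^2}$, it sets $P(x)=\sum_i B(e_i)\otimes\delta_{p_x(i)}$, $Q(y)=\sum_j A(e_j)\otimes\delta_{p_y(j)}$, and the coincidence pattern $\{(i,j):p_x(i)=p_y(j)\}=\{(i_0+k,j_0+k)\}$ with $i_0+j_0=d(x,y)$ makes $\langle P(x),Q(y)\rangle$ telescope directly to $\dot\phi(d(x,y))$ minus the parity limit; the constant-at-infinity term is then absorbed via $(-1)^{d(x,y)}=(-1)^{d(x,e)}(-1)^{d(y,e)}$, which is the same bipartiteness trick as your $2$-colouring $\epsilon(x)\epsilon(y)$. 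You instead anchor geodesics at a base point $o$ (pointing inward rather than toward an end), which forces you to encode the missing ``tail'' of the finite geodesic $[o,x]$ by the vector $S^{*n_x}u_l$ and to recover the Hankel telescoping through the identity $\langle S^{*a}u,S^{*b}v\rangle=\langle S^{*(a+1)}u,S^{*(b+1)}v\rangle+u(a)\overline{v(b)}$; together with the tree fact $\{j\geq 1: x_j=y_j\}=\{1,\dots,p(x,y)\}$ this reproduces the same factorization through Proposition \ref{Schur}, with $\|P(x)\|^2=\|Q(y)\|^2=\|H\|_{S^1}$ (the paper gets $\|P\|_\infty\|Q\|_\infty=\|A\|_{S^2}\|B\|_{S^2}=\|H\|_{S^1}$, same constant). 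The end-anchored construction is what generalizes to hyperbolic graphs via Ozawa's vectors $\eta_i^{\pm}$, so it is the one the paper emphasizes; your base-point version is equivalent for trees but does not transport as directly. Two small remarks: your derivation of the existence of $\lim_n\dot\phi(2n)$, $\lim_n\dot\phi(2n+1)$ from summability of the diagonal and first superdiagonal of $H$ is fine and slightly more explicit than the paper's; and for the ``only if'' direction you only sketch a strategy and defer the substantive step to \cite{HSS10} (or \cite[Theorem 6.1]{Pis01}), which is acceptable here since the paper itself cites Haagerup--Steenstrup--Szwarc for that implication and only reproves the ``if'' part.
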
 

In this section, we remark that a similar result to the ``if" part of Theorem \ref{HSS} holds on hyperbolic graphs. We first recall a proof of the ``if" part of Theorem \ref{HSS}, valid for all trees, and that we will later adapt for general hyperbolic graphs. Fix an infinite geodesic path $p$. For every $x \in X$, there is a unique infinite geodesic path $p_x$ which starts at $x$ and flows into $p$. Remark that for every $x,y \in X$ the geodesics $p_x$ and $p_y$ first meet on a point of the geodesic segment between $x$ and $y$, and then coincide forever. In formulas, the set $\{(i,j) \in \N \times \N, p_x(i) = p_x(j)\}$ is of the form $\{(i_0+k,j_0+k), k \in \N\}$, for $i_0,j_0$ satisfying $i_0+j_0 = d(x,y)$.

Since $H=(\dot{\phi}(i+j)-\dot{\phi}(i+j+2))_{0\leq i,j<\infty}$ belongs to the trace class, we can write $H=A^* B$ where $A,B$ are
Hilbert-Schmidt operators satisfying $\|H\|_{S^1}= \|A\|_{S_2}
\|B\|_{S_2}$. For $x,y \in X$, set
\[P(x) = \sum_{i \geq 0} B(e_i) \otimes \delta_{p_x(i)}
\in \ell^2(\N) \otimes \ell^2(X)\]
and 
\[Q(y) = \sum_{j \geq 0} A(e_j) \otimes \delta_{p_y(j)}
\in \ell^2(\N) \otimes \ell^2(X)\]
where $(e_i)_{i \geq 0}$ and $\delta_x$ are the coordinate orthonormal basis of
$\ell^2(\N)$ and $\ell^2({X})$.
We see that
$ \|P(x)\|^2 = \sum_{i \geq 0} \| B(e_i) \|^2=\|B\|_{S_2}^2$
and $ \|Q(y)\|^2 = \|A\|_{S_2}^2.$
Using that $\langle B e_i,A e_j\rangle = \dot{\phi}(i+j) - \dot{\phi}(i+j+2)$ we can
write
\begin{multline*} \langle P(x),Q(y) \rangle= \sum_{i,j, p_x(i)=p_y(j)} \dot{\phi}(i+j) - \dot{\phi}(i+j+2)\\ =\sum_{k=0}^\infty\dot{\phi}(d(x,y)+2k) - \dot{\phi}(d(x,y)+2k+2) \end{multline*}
which equals to $\dot{\phi}(d(x,y))-\lim_{n\rightarrow\infty}\dot{\phi}(2n)$ for $d(x,y)$ even and equals to $\dot{\phi}(d(x,y))-\lim_{n\rightarrow\infty}\dot{\phi}(2n+1)$ for $d(x,y)$ odd. 
Therefore,
 $$\dot{\phi}(d(x,y))=\langle P(x),Q(y) \rangle+\frac{1+(-1)^{d(x,y)}}2\lim_{n}\dot{\phi}(2n)+\frac{1-(-1)^{d(x,y)}}2\lim_{n}\dot{\phi}(2n+1) .$$
Fix a distinguished point $e\in X$. Note $(-1)^{d(x,y)}=(-1)^{d(x,e)}(-1)^{d(y,e)}$ since $d(x,e)+d(y,e)-d(x,y)$ is even. So $1\pm (-1)^{d(x,y)}$ is a Schur multiplier with norm $\leq 2$.
 We then obtain by Proposition \ref{Schur} that $$\|M_{\dot{\phi}}\|\leq \lim_{n\rightarrow\infty}(| \dot{\phi}(2n)|+|\dot{\phi}(2n+1)|)+\|H\|_{S^1}.$$

\subsection{Generalization to hyperbolic graphs}

Identify a connected graph $\Gamma$ with its vertex set and equip it with the graph distance $d$. A geodesic path $p$ is a finite or infinite sequence of points $p(0),...,p(k)...\in\Gamma$ such that $d(p(m), p(n)) = |m-n|$ for every $m, n$. A connected graph $\Gamma$ is {\it hyperbolic} if there exists a constant $\delta \geq 0$ such that for every geodesic triangle each edge is contained in the $\delta$-neighborhood of the union of the other two. A finitely generated group $\Gamma$ is {\it hyperbolic} if its Cayley graph is hyperbolic. The hyperbolicity property of a group is independent of the choice of generating set. A tree is a hyperbolic graph for $\delta=0$. See e.g. \cite{BN08}, section 5.3 for more information on hyperbolic groups.

To obtain an extension on general hyperbolic graphs, we need the following result of
Ozawa \cite[Proposition 10]{O08}.
\begin{prop}[Ozawa]\label{ozawa} Let $\Gamma$ be a hyperbolic graph with bounded
 degree. There is $C_0 \in \R$, a Hilbert space $\mathcal H$ and maps
 $\eta_i^{\pm}\colon \Gamma \to \mathcal H$ (for $i \in \N$) such that
\begin{enumerate}
\item\label{orthogonality} $\eta_i^\pm(x) \perp \eta_{j}^\pm(x)$ for all $x \in \Gamma$ and
 $|i-j|\geq 2$.
\item\label{bound} $\|\eta_i^\pm(x)\| \leq \sqrt{C_0}$ for all $i \in \N$ and $x \in \Gamma$,
\item\label{scalprod} $\sum_{i+j=n} \langle \eta_i^+(x), \eta_j^-(y)\rangle =
 \left\{ \begin{array}{cc} 1 &\textrm{if }d(x,y)\leq n\\ 0 & \textrm{otherwise}\end{array}\right.$.
\end{enumerate}
\end{prop}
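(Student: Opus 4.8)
\textbf{Proof proposal for Proposition \ref{ozawa}.}

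The plan is to construct the vectors $\eta_i^\pm(x)$ from a combinatorial encoding of geodesics emanating from each vertex, exploiting the thin-triangles condition to control how the geodesics from two different basepoints interact. First I would fix a basepoint $o \in \Gamma$ and, for each vertex $x$, consider the set of all geodesic paths from $x$ to $o$ (using that a bounded-degree graph is proper, so there are finitely many). The key geometric fact from $\delta$-hyperbolicity is that any two such geodesics from $x$ to $o$ stay within bounded Hausdorff distance of each other, and more importantly, for two vertices $x,y$ with $d(x,y) \leq n$, geodesics $[x,o]$ and $[y,o]$ become uniformly close (within a constant depending only on $\delta$) past the sphere of radius roughly $n$ around $o$; they ``fellow-travel'' from that point on. This is the mechanism that will produce the indicator $\mathbf 1_{d(x,y)\leq n}$ in item \eqref{scalprod}.

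Concretely, I would let $\mathcal H = \bigoplus_i \ell^2(B_i)$ where $B_i$ is a set indexing the local geometry at scale $i$ (for instance, balls of a fixed radius $R=R(\delta)$ centered at points, or pairs (vertex at distance $i$ from $o$, geodesic germ)), and define $\eta_i^+(x)$ to record, in the $i$-th block, the ``state'' of geodesics from $x$ to $o$ at distance $i$ along them, suitably normalized; $\eta_j^-(y)$ records the corresponding backward data for $y$. One then sets things up so that $\langle \eta_i^+(x), \eta_j^-(y)\rangle$ is supported on $i+j$ near $d(x,o)+d(y,o)$ and, after summing over $i+j=n$, telescopes/averages to $1$ precisely when the geodesics from $x$ and from $y$ have merged by the time their lengths add up to $n$, i.e. when $d(x,y)\leq n$ up to the hyperbolicity constant; the constant $C_0$ absorbs the bounded multiplicity coming from the finitely many geodesics and the radius-$R$ balls. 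Item \eqref{orthogonality}, $\eta_i^\pm(x)\perp\eta_j^\pm(x)$ for $|i-j|\geq 2$, is built in by construction since those vectors live in different blocks $\ell^2(B_i)$, $\ell^2(B_j)$ — one must only be slightly careful that a block interacts with its neighbor (hence $\geq 2$ rather than $\geq 1$), which is why the construction should involve overlapping windows of length $2$. Item \eqref{bound} is the statement that each block-component has norm bounded by a constant: this holds because at scale $i$ the relevant data (number of geodesic germs through a given point, cardinality of an $R$-ball) is bounded uniformly by bounded degree plus $\delta$-hyperbolicity.

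The main obstacle is getting item \eqref{scalprod} \emph{exactly} right — an honest $0/1$ alternative rather than something that is merely $O(1)$ and bounded below. The naive construction gives $\sum_{i+j=n}\langle\eta_i^+(x),\eta_j^-(y)\rangle$ equal to some bounded function of $x,y,n$ that is $\geq c>0$ when $d(x,y)\leq n$ and $0$ otherwise, which is not what is claimed. The fix, which I expect to be the technical heart of the argument, is a telescoping trick: one replaces the $\eta$'s by differences between consecutive scales (or equivalently works with the ``frontier'' of the merged region rather than its bulk), so that the sum over $i+j=n$ collapses to a single boundary term which is genuinely $1$ or $0$. This is the same device used in the tree case of the HSS argument recalled above — there the relevant set $\{(i,j): p_x(i)=p_y(j)\}$ was exactly $\{(i_0+k,j_0+k)\}$ with $i_0+j_0=d(x,y)$ — and the content of Ozawa's proposition is that $\delta$-hyperbolicity lets one reproduce this clean picture up to bounded fuzziness, with the fuzziness pushed into $C_0$ and the index gap $2$ in \eqref{orthogonality}. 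I would carry out the details by first doing the tree case to fix notation, then quantifying each step of the tree argument with an explicit dependence on $\delta$ and the degree bound, checking at the end that all error terms are absorbed as claimed.
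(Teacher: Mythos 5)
The first thing to note is that the paper does not prove this statement at all: it is imported verbatim as \cite[Proposition 10]{O08} and used as a black box, so the only meaningful comparison is with Ozawa's argument, and your text would have to stand as a self-contained proof. As written, it does not: the decisive step is exactly the one you defer. Conditions \eqref{orthogonality} and \eqref{bound} are indeed the soft part (block structure, properness, bounded degree), but condition \eqref{scalprod} is an \emph{exact} identity -- the sum $\sum_{i+j=n}\langle\eta_i^+(x),\eta_j^-(y)\rangle$ must equal $1$ or $0$ on the nose for every pair $(x,y)$ and every $n$. Fellow-travelling of geodesics in a $\delta$-hyperbolic graph only gives merging up to additive errors of order $\delta$, so the construction you describe produces a kernel that is bounded above and bounded below on $\{d(x,y)\le n\}$, not the indicator. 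You acknowledge this, but the proposed remedy -- a ``telescoping trick'' with the ``fuzziness pushed into $C_0$ and the index gap $2$'' -- cannot work as stated: $C_0$ only enters in the norm bound \eqref{bound} and the gap only in \eqref{orthogonality}, so there is simply no place in the statement where an $O(\delta)$ error in the scalar products could be absorbed. And exactness is not cosmetic here: in the application (the proof of Theorem \ref{hyperbolicgraph} in Section \ref{section=multipliers}), the identity \eqref{scalprod} is what makes $\langle P(x),Q(y)\rangle$ collapse, via a telescoping sum in $n$, to $\dot\phi(d(x,y))-\lim_n\dot\phi(n)$; an approximate version of \eqref{scalprod} would leave an uncontrolled remainder and the argument would not go through.

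The tree picture you invoke is the right intuition (it is the same one the paper uses to motivate the statement), but note where its exactness comes from: uniqueness of geodesics forces $\{(i,j):p_x(i)=p_y(j)\}$ to be precisely $\{(i_0+k,j_0+k):k\ge 0\}$ with $i_0+j_0=d(x,y)$. Reproducing this \emph{exactly} when geodesics are only unique up to $\delta$-fuzziness is the entire content of Ozawa's proposition, and it requires his specific construction rather than a generic quantification of the tree argument; hyperbolicity and bounded degree are what let one verify the flexible conditions \eqref{orthogonality}--\eqref{bound} for that construction, not what repairs an approximate version of \eqref{scalprod}. (A smaller inconsistency: if, as you first suggest, $\eta_i^\pm(x)$ lived purely in the $i$-th block of $\mathcal H=\bigoplus_i\ell^2(B_i)$, you would get orthogonality for $|i-j|\ge 1$, which is stronger than claimed; the gap $2$ in \eqref{orthogonality} signals that the vectors must genuinely straddle adjacent scales, so the ``overlapping windows'' cannot be an afterthought -- they are part of whatever explicit formula you would need to write down and then verify against \eqref{scalprod}.) Until such a formula is exhibited and the identity checked, the proposal is a plan, not a proof.
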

The $\eta_i^{\pm}(x)$ provided by this Proposition will play the role of the vectors $\delta_{p_x(i)} \in \ell^2(X)$ in the preceding proof.

 Assume $H=(\dot{\phi}(i+j)-\dot{\phi}(i+j+1))_{0\leq i,j<\infty}\in S^1$. Then the diagonal of $H$ belongs to $\ell^1$, and hence $\lim_n \dot\phi(n)$ exists. We proceed similarly. Write $H=A^* B$ with $\|H\|_{S^1}= \|A\|_{S_2}
\|B\|_{S_2}$. 
 Set
\[P(x) = \sum_{i \geq 0} B(e_i) \otimes \eta_i^+(x)
\in \ell^2 \otimes \mathcal H\] 
and 
\[Q(y) = \sum_{j \geq 0} A(e_j) \otimes \eta_j^-(x)
\in \ell^2 \otimes \mathcal H,\]for $x,y \in \Gamma$. 
From condition \ref{orthogonality} and \ref{bound} in Proposition \ref{ozawa} we see that
$|\langle B(e_i) \otimes \eta_i^+(x),B(e_j) \otimes \eta_j^+(x)\rangle|$ is
zero if $|i-j|\geq 2$, and always less than $C_0\frac{\|B(e_i)\|^2 +
 \|B(e_i)\|^2}{2}$. Hence 
\[ \|P(x)\|^2 = \sum_{i,j \geq 0} \langle B(e_i) \otimes
 \eta_i^+(x),B(e_j) \otimes \eta_j^+(x)\rangle \leq 3 C_0 \|B\|_{S_2}^2.\] 
 Similarly, $\sup_y \|Q(y)\|^3 \leq 3 C_0 \|A\|_{S_2}^2$. 
 We claim that $\langle P(x),Q(y)
\rangle= \dot{\phi}(d(x,y))$ for all $x,y \in \Gamma$. Indeed,
\[ \langle P(x),Q(y)
\rangle = \sum_{i,j} \langle B e_i \otimes \eta_i^+(x) , A e_j \otimes
\eta_i^-(y)\rangle.\]
Using that $\langle B e_i,A e_j\rangle = \dot{\phi}(i+j) - \dot{\phi}(i+j-1)$ we can
write
\[ \langle P(x),Q(y) = \sum_{n \geq 0} \left((\dot{\phi}(n) - \dot{\phi}(n+1)) \sum_{i+j=n}
\langle \eta_i^+(x), \eta_j^-(y)\rangle\right),\]
which equals $\dot{\phi}(d(x,y)) - \lim_n \dot{\phi}(n)$ from assumption \ref{scalprod} in Proposition
\ref{ozawa}. By Propostion \ref{Schur}, we obtain Theorem \ref{hyperbolicgraph}.

\begin{rem} In Theorem \ref{hyperbolicgraph}, we can take $C=1$ if $\Gamma$ is a tree. Otherwise, $C$ may depend on $\Gamma$.
\end{rem}
\begin{rem}\label{rem=difference-1-2} As the example of $\dot \phi(k) = (-1)^k$ shows, the condition that $(\dot{\phi}(j+k)-\dot{\phi}(j+k+1))_{j,k \geq 0}$ is of trace class is stronger than the condition in Theorem \ref{HSS} for $(\dot{\phi}(j+k)-\dot{\phi}(j+k+2))_{j,k \geq 0}$, but is necessary for general hyperbolic graphs. Indeed, if $\Gamma_0$ is the Cayley graph of $(\Z/3\Z) \ast (\Z/3\Z) \ast (\Z/3\Z)$ with generating set the union of the $3$ copies of $\Z/3\Z$, then by \cite[Theorem 6.1]{W95} the Schur multiplier with symbol $\dot \phi(d(x,y))$ is bounded on $B(\ell^2(\Gamma_0))$ if and only if $(\dot{\phi}(j+k)-\dot{\phi}(j+k+1))_{j,k \geq 0}$ belongs to the trace class. Theorem \ref{hyperbolicgraph} can therefore be read as ``a function $\phi\colon \N \to \C$ defines a bounded radial multiplier on every hyperbolic graph $\Gamma$ with bounded degree if and only if it defines a bounded radial multiplier on $\Gamma_0$''.
\end{rem}
\begin{rem} The fact that it is the matrix $(\dot{\phi}(j+k)-\dot{\phi}(j+k+2))_{j,k \geq 0}$ that appears in Theorem \ref{HSS} is related to the fact that trees are bipartite (a graph is bipartite is it does not contain any odd length cycle).
A modification (left to the reader) of the proofs in \cite{O08} actually shows that when the hyperbolic graph $\Gamma$ is bipartite, then Theorem \ref{hyperbolicgraph} holds also for $H$ replaced by $(\dot{\phi}(j+k)-\dot{\phi}(j+k+2))_{j,k \geq 0}$. In that case the statement becomes ``a function $\phi\colon \N \to \C$ defines a bounded radial multiplier on every bipartite hyperbolic graph $\Gamma$ if and only if it defines a bounded radial multiplier on ${\mathbb F}_2$ with standard generating set''.
\end{rem}
In particular, if we apply the preceding Theorem \ref{hyperbolicgraph} to a Cayley graph of a finitely generated hyperbolic group $\Gamma$ and recall that the word length of $y^{-1}x$ equals to $d(y^{-1}x,e)=d(x,y)$ on its Cayley graph, we get
\begin{corollary}\label{hyperbolicgroup}
Let $\Gamma$ be a finitely generated hyperbolic group, and $|\cdot|$ the length
function on $\Gamma$ associated to a finite generating set of
$\Gamma$. Then there is a constant $C \in \R$ such that if $\dot{\phi}\colon \N
\to \C$ is a function such that the infinite Hankel matrix $$H=(\dot{\phi}(k+j)-\dot{\phi}(k+j+1))_{0\leq k,j<\infty}$$
belongs to the trace class $S^1$, then $ \lambda_g\mapsto
\dot{\phi}(|g|)\lambda_g$ extends to a completely bounded map on ${\mathcal
 L}(\Gamma)$ with norm $\leq C \|H\|_{S^1} + \lim_{n\rightarrow \infty}|\dot{\phi}(n)|$.
\end{corollary}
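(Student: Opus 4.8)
The plan is to deduce the statement directly from Theorem~\ref{hyperbolicgraph} via the standard transference between radial Fourier multipliers and radial Schur multipliers.

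First I would check that Theorem~\ref{hyperbolicgraph} applies to the situation at hand. Since $\Gamma$ is finitely generated, its Cayley graph with respect to the fixed finite generating set is a connected graph of bounded degree, and it is hyperbolic precisely because $\Gamma$ is a hyperbolic group (hyperbolicity of the Cayley graph being independent of the generating set). Applying Theorem~\ref{hyperbolicgraph} to this graph gives: whenever the Hankel matrix $H = (\dot{\phi}(k+j) - \dot{\phi}(k+j+1))_{k,j \geq 0}$ lies in $S^1$, the limit $\lim_n \dot{\phi}(n)$ exists and the Schur multiplier $M_\varphi\colon (a_{x,y}) \mapsto (\dot{\phi}(d(x,y)) a_{x,y})$ is bounded on $B(\ell^2(\Gamma))$ with norm at most $C\|H\|_{S^1} + \lim_n |\dot{\phi}(n)|$, where $d$ is the word metric on $\Gamma$ and $C$ depends only on $\Gamma$.

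Next I would translate this Schur bound into a bound on the Fourier multiplier $\lambda_g \mapsto \dot{\phi}(|g|)\lambda_g$. Because the word metric is left-invariant and the word length is symmetric, $d(x,y) = |y^{-1}x|$, so the symbol $\varphi(x,y) = \dot{\phi}(d(x,y))$ is exactly the function $(x,y) \mapsto \phi(y^{-1}x)$ attached to the Fourier multiplier with $\phi(g) = \dot{\phi}(|g|)$; that is, $\varphi$ is the Schur symbol canonically associated with $\lambda_g \mapsto \dot\phi(|g|)\lambda_g$. By the identification recalled in the preliminaries (see \cite{BF84}), the completely bounded norm of this Fourier multiplier on $\mathcal L(\Gamma)$ equals the norm of $M_\varphi$ on $B(\ell^2(\Gamma))$, so the bound from the previous step carries over verbatim, yielding the asserted estimate.

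There is no serious obstacle here: the statement is an immediate corollary. The only point to keep in mind is that one must use a Cayley graph of $\Gamma$ — equivalently, the left-invariant word metric — so that the radial Schur multiplier genuinely arises as the symbol of a Fourier multiplier and the norm identification of \cite{BF84} is applicable; for an arbitrary hyperbolic graph structure on the vertex set $\Gamma$ this link would be unavailable.
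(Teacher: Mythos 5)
Your proposal is correct and follows the same route as the paper: apply Theorem~\ref{hyperbolicgraph} to the Cayley graph of $\Gamma$ (connected, bounded degree, hyperbolic) and transfer the radial Schur multiplier bound to the Fourier multiplier $\lambda_g\mapsto\dot\phi(|g|)\lambda_g$ via the left-invariance identity $d(x,y)=|y^{-1}x|$ and the norm identification of \cite{BF84}. Nothing is missing.
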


\subsection{Weighted length functions on ${\mathbb F}_n$}
Corollary \ref{hyperbolicgroup} in particular applies to ${\mathbb F}_n$ equipped with the length function associated to other finite generating sets than the standard generating set.

One may also consider weighted lengths on a free group ${\mathbb F}_n$, $n \in \N \cup \{\infty\}$. Denote the free generators by $g_1,g_2,...g_k,...$. Fix a sequence of positive real numbers $a=(a_k)_k$, for $g=g_{i_1}^{k_1}g_{i_2}^{k_2}...g_{i_m}^{k_m}\in {\mathbb F}_n,i_j\neq i_{j+1}$, let
$$|g|_a=\sum_{j=1}^ma_{i_j}|k_j|.$$ 

Theorem \ref{main-result} also extends to 
the weighted lengths $|g|_a$ on free products groups ${\mathbb F}_n, 2\leq n\leq \infty$ because of the following Proposition.

For $\phi:{\R}_+\mapsto {\C}$, we denote by $\phi_t(\cdot)=\phi(t\cdot)$ and $m_{\phi^a}$ the Fourier multiplier sending $\lambda_g$ to $\phi(|g|_a)\lambda_g$. We ignore the ``a'' when $a_k=1$ for all $k$.
\begin{prop}\label{dilate}
Given a continuous function $\phi$, suppose $\|m_{\phi_t}\|_{c.b}<C$ on ${\mathbb F}_\infty$ for all $0<t<1$, then $\|m_{\phi_t^a}\|_{c.b}<C$ for the same constant $C$ for any sequence $a$ and any $t>0$. 
\end{prop}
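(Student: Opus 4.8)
The plan is to absorb the dilation parameter into the weights, to treat first the case of rational weight sequences by means of a length-multiplying self-embedding of $\mathbb F_\infty$, and then to reach arbitrary weights by approximation. Note that $m_{\phi_t^a}$ sends $\lambda_g\mapsto\phi(t|g|_a)\lambda_g=\phi(|g|_{ta})\lambda_g$, where $ta:=(ta_k)_k$, so $m_{\phi_t^a}=m_{\phi^{ta}}$; it therefore suffices to show that $\|m_{\phi^b}\|_{c.b}\le C$ for every sequence $b=(b_k)_k$ of positive real numbers, with the strict inequality $<C$ when $b$ is rational.

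Assume first that $b_k=p_k/q$ with integers $p_k\ge1$ and common denominator $q$; after multiplying numerator and denominator by $2$ we may assume $q\ge2$. Fix distinct free generators $(h_{k,\ell})_{k\ge1,\ 1\le\ell\le p_k}$ of $\mathbb F_\infty$ and let $\iota\colon\mathbb F_\infty\to\mathbb F_\infty$ be the homomorphism with $\iota(g_k)=h_{k,1}h_{k,2}\cdots h_{k,p_k}$. A look at reduced words — no cancellation occurs inside a power $(h_{k,1}\cdots h_{k,p_k})^{\pm m}$, nor between two consecutive blocks, which involve different letters — shows that $\iota$ is injective and that $|\iota(g)|=\sum_j p_{i_j}|k_j|=q\,|g|_b$ for every reduced word $g=g_{i_1}^{k_1}\cdots g_{i_m}^{k_m}$, where $|\cdot|$ is the standard length on the target copy of $\mathbb F_\infty$. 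The normal, unital, completely isometric $*$-homomorphism $\iota_*\colon\lambda_g\mapsto\lambda_{\iota(g)}$ identifies $\mathcal L(\mathbb F_\infty)$ with the range of the normal trace-preserving conditional expectation $E$ on $\mathcal L(\mathbb F_\infty)$ given by $E(\lambda_h)=\lambda_h$ for $h\in\iota(\mathbb F_\infty)$ and $E(\lambda_h)=0$ otherwise. On $\C[\mathbb F_\infty]$ the normal map $E\circ m_{\phi_{1/q}}\circ\iota_*$ sends $\lambda_g\mapsto\phi(|\iota(g)|/q)\lambda_{\iota(g)}=\iota_*(\phi(|g|_b)\lambda_g)$, so its range lies in $\iota_*(\mathcal L(\mathbb F_\infty))$ and $m_{\phi^b}=\iota_*^{-1}\circ E\circ m_{\phi_{1/q}}\circ\iota_*$ is completely bounded with
\[\|m_{\phi^b}\|_{c.b}\ \le\ \|m_{\phi_{1/q}}\|_{c.b}\ <\ C,\]
because $E$ and $\iota_*$ are complete contractions and $1/q\in(0,1)$.

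For an arbitrary sequence $b=(b_k)_k$ of positive reals, pick rational sequences $b^{(m)}=(b^{(m)}_k)_k$ of positive numbers with $b^{(m)}_k\to b_k$ as $m\to\infty$ for each $k$. For fixed $g$ the finite sum defining $|g|_{b^{(m)}}$ tends to $|g|_b$, hence $\phi(|g|_{b^{(m)}})\to\phi(|g|_b)$ by continuity of $\phi$; so for every $n\ge1$ and every $X$ in the algebraic tensor product $\C[\mathbb F_\infty]\otimes M_n$, the finite sum $(m_{\phi^{b^{(m)}}}\otimes\mathrm{id}_{M_n})(X)$ converges in norm to $(m_{\phi^b}\otimes\mathrm{id}_{M_n})(X)$. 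Since $\|m_{\phi^{b^{(m)}}}\|_{c.b}<C$ for all $m$ by the previous step, we get $\|(m_{\phi^b}\otimes\mathrm{id}_{M_n})(X)\|\le C\|X\|$ for all such $X$; that is, $m_{\phi^b}$ is completely bounded with cb norm $\le C$ on the weak$^*$-dense $*$-subalgebra $\C[\mathbb F_\infty]$ of $\mathcal L(\mathbb F_\infty)$, and by Kaplansky density it extends to a normal completely bounded map on $\mathcal L(\mathbb F_\infty)$ with $\|m_{\phi^b}\|_{c.b}\le C$ ($<C$ when $b$ is rational).

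The manipulations with $\iota$ and the compression estimate for completely bounded Fourier multipliers under subgroup inclusions are routine; the one point that needs care is the last paragraph, namely upgrading the point-norm convergence on the dense subalgebra $\C[\mathbb F_\infty]$ to a bona fide normal completely bounded extension on $\mathcal L(\mathbb F_\infty)$ through Kaplansky density. A minor wrinkle is that this limiting argument yields only the non-strict bound $\le C$ for general real weights, whereas the strict inequality does persist for every rational weight sequence (in particular for a dense set of dilations), which is all the applications require.
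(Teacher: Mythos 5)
Your proof is correct and follows essentially the same route as the paper: intertwine $m_{\phi^b}$ with a standard-length multiplier $m_{\phi_{1/q}}$ via a completely isometric, length-multiplying self-embedding of ${\mathbb F}_\infty$, treat rational weights first, and pass to general weights by pointwise approximation of the symbols. The only differences are cosmetic --- the paper uses the embedding $\lambda_{g_j}\mapsto\lambda_{g_j^{a_j}}$ instead of your products of fresh generators and omits the conditional expectation $E$ (which is indeed unnecessary, since the multiplier already preserves the image subalgebra) --- and the non-strict bound $\leq C$ in the limiting step is a wrinkle shared with, and glossed over by, the paper's own ``follows by approximation.''
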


\begin{proof} Assume first that $a_k\in {\N^*=\{1,2,\dots\}}$. Let $T_a$ be the trace preserving *-homomorphism sending $\lambda_{g_j}$ to $\lambda_{g_j^{a_j}}$. Then 
$$T_a \circ m_{\phi_t^a} =m_{\phi_t} \circ T_a,$$ 
which shows that $\|m_{\phi_t^a}\|_{c.b}< \|m_{\phi_t}\|_{c.b}$ since $T_a$ is completely isometric. 

Next we assume $a_k\in {\Q}_+$ with a common denominator $N$. Then
$$ T_{Na} \circ m_{\phi_t^a} = m_{\phi_{\frac tN}} T_{Na}.$$
Therefore, $m_{\phi_t^a}$ is completely bounded with upper bound $\sup_{0<t<1}\|m_{\phi_t}\|_{c.b}$.

The general case follows by approximation.
\end{proof}

\section{Complete boundedness of $S_t^r$}\label{section=trace_class_estimates}

Theorem \ref{HSS} (respectively Corollary \ref{hyperbolicgroup}) states that the completely bounded norm of the map $S_t^r$ on $\mathcal L({\mathbb F}_n)$ (respectively $\mathcal L(\Gamma)$ for a hyperbolic group $\Gamma$) is equivalent to (respectively dominated by) the trace class norm of the corresponding Hankel matrix. 

In this section we give an upper bound on the trace class norm of Hankel matrices with smooth symbol, that we apply to several explicit examples. In particular Theorem \ref{main-result} is an immediate consequence of Corollary \ref{hyperbolicgroup} and Examples \ref{ex=heat_sgp_complex} and \ref{ex=heat_sgp_real}.

We start by stating a more precise version of Theorem \ref{thm=R+_intro}.
 \begin{thm}\label{R+} Let $f\colon [0,\infty) \to \R$ be a bounded continuous function of class $C^2$ on $(0,\infty)$, and $ \frac12\geq\alpha> 0$. Then, for any $t>0$, the trace class norm of the matrix $f(t(j+k)) - f(t(j+k+1))$ satisfies the inequality
\begin{eqnarray}\label{tinv}
 \left\|\left(f(t(j+k))-f(t(j+k+1))\right)_{j,k \geq 0}\right\|_{S^1} 
 \leq \frac{C}{\sqrt {\alpha}} \sqrt{ A B} \leq \frac{2C}{\sqrt {\alpha}} B
 \end{eqnarray}
for some universal constant $C$, where 
\[ A = \sqrt{\| x^{\frac 1 2 - \alpha} f'\|_{L^2(\R_+)} \| x^{\frac 1 2 + \alpha} f'\|_{L^2(\R_+)}},\]
\[ B = \sqrt{\| x^{\frac 3 2 - \alpha} f''\|_{L^2(\R_+)} \| x^{\frac 3 2 + \alpha} f''\|_{L^2(\R_+)}}.\]
\end{thm}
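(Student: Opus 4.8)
The plan is to reduce the estimate to Peller's characterization of trace-class Hankel operators, which says that the Hankel matrix with symbol $\sum_{n\ge 0} c_n e^{in\theta}$ has $S^1$-norm comparable to the Besov $B^1_1$-norm of the function $\theta\mapsto \sum_n c_n e^{in\theta}$ (this will be recalled in Section~\ref{section=trace_class_estimates}; I assume it). First I would observe that, up to a rescaling, it suffices to treat $t=1$: indeed the matrix $(f(t(j+k))-f(t(j+k+1)))$ is the Hankel matrix attached to $g=f(t\,\cdot)$, and the right-hand side is scale-invariant, since under $f\rightsquigarrow f(t\,\cdot)$ one has $f'\rightsquigarrow t f'(t\,\cdot)$ and $x^{1/2\pm\alpha}f'(x)$ has its $L^2(\R_+)$-norm multiplied by $t^{\mp\alpha}$, so the geometric mean $A$ is unchanged, and likewise for $B$. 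So the content is the bound for the Hankel matrix with entries $a_{j+k}$ where $a_n = f(n)-f(n+1)$.

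Next I would write $a_n = f(n)-f(n+1) = -\int_n^{n+1} f'(x)\,dx$, and more usefully express things via a single ``symbol'' function on the circle. The natural candidate is $u(\theta) = \sum_{n\ge 0} a_n e^{in\theta}$, and the goal becomes $\|u\|_{B^1_1}\lesssim \alpha^{-1/2}\sqrt{AB}$. The standard route to a Besov estimate is a Littlewood--Paley decomposition: bound $\sum_j 2^j \|u * \varphi_j\|_{L^1(\T)}$ where $\varphi_j$ is a smooth frequency projection onto $|n|\simeq 2^j$. On the frequency block $|n|\simeq 2^j$ one has $a_n \approx -f'(n)$ with an error governed by $f''$; more precisely, summation by parts / the mean value theorem turns differences of $f$ into values of $f'$, and differences of $f'$ into values of $f''$. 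The key analytic input is a pair of weighted $L^1(\T)\leftrightarrow\ell^2(\N)$ comparisons: on a dyadic frequency block around $N=2^j$, the $L^1$-norm of the block is controlled by $N^{-1/2}$ times the $\ell^2$-norm of the coefficients on that block (Cauchy--Schwarz, using that the block has $\simeq N$ frequencies and $\|\cdot\|_{L^1(\T)}\le\|\cdot\|_{L^2(\T)}$). Then $\sum_j 2^j \cdot 2^{j/2}\|(f'(n))_{n\simeq 2^j}\|_{\ell^2}$ should be summed by splitting the weight $2^{3j/2} = 2^{(3/2-\alpha)j}\cdot 2^{\alpha j}$ and using Cauchy--Schwarz in $j$: one factor of $2^{\alpha j}$ is paired against a geometric series in $j$ that converges with constant $O(\alpha^{-1/2})$ (this is where the $\alpha^{-1/2}$ and the requirement $\alpha>0$ come from), and the remaining $2^{(3/2-\alpha)j}\|(f'(n))_{n\simeq 2^j}\|_{\ell^2}$ is recognized, via $\sum_j (\cdot)^2 \simeq \|x^{3/2-\alpha} \text{(something)}\|_{L^2}^2$, as a weighted $L^2$-norm. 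Symmetrizing $\alpha \leftrightarrow -\alpha$ yields the geometric mean $\sqrt{AB}$; and the passage from $f'$-weighted norms to the $f''$-weighted $A,B$ (i.e. the second inequality $\sqrt{AB}\le 2B$) is a Hardy-type inequality, $\|x^{1/2\pm\alpha} f'\|_{L^2}\lesssim \|x^{3/2\pm\alpha}f''\|_{L^2}$, valid because $f$ is bounded (so $f'\to 0$ appropriately and no boundary term survives) with constant controlled uniformly for $\alpha\in(0,1/2]$.

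The main obstacle I expect is the bookkeeping at the ``junction'' between $f$, $f'$, and $f''$: one has Hankel entries built from first differences of $f$, Peller's theorem delivers a statement about $f$ itself, but the hypotheses and the final bound are phrased in terms of $f''$. Controlling the transition requires care near $x=0$ (the weights $x^{3/2-\alpha}$ are integrable-friendly there but one must check $f'$ is not too singular — this is where boundedness and $C^2$ on $(0,\infty)$, plus a Hardy inequality, do the work) and near $x=\infty$ (where boundedness of $f$ forces decay of the relevant quantities and justifies dropping boundary terms in the integrations by parts). A secondary technical point is making the Littlewood--Paley blocks interact cleanly with the one-sided nature of the symbol ($a_n$ for $n\ge0$ only) and with the replacement of differences by derivatives without losing the sharp powers of $2^j$; I would handle this by working with $f'$ directly, writing $a_n = -\int_0^1 f'(n+s)\,ds$ and estimating $\widehat{u}$ on dyadic blocks through this integral representation, so that all estimates are ultimately weighted $L^2(\R_+)$-estimates on $f'$ and $f''$ to which Cauchy--Schwarz and Hardy's inequality apply.
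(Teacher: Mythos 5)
Your overall frame matches the paper's: reduce to $t=1$ by scale invariance, invoke Peller's theorem to replace the $S^1$-norm by the $B_1^1$-norm of the symbol $\sum_n(f(n)-f(n+1))z^n$, convert first differences of $f$ (resp.\ of $f'$) into $f'$ (resp.\ $f''$) via $a_n=-\int_0^1 f'(n+s)ds$, and finish with the Hardy-type inequality $\|x^{1/2\pm\alpha}f'\|_{L^2}\lesssim\|x^{3/2\pm\alpha}f''\|_{L^2}$. But the central analytic step of your plan is wrong as stated: the claimed block estimate ``$\|u\ast\varphi_j\|_{L^1(\T)}\lesssim 2^{-j/2}\|(\widehat u(n))_{n\simeq 2^j}\|_{\ell^2}$ by Cauchy--Schwarz'' is false. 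Parseval gives $\|u\ast\varphi_j\|_{L^2(\T)}=\|(\widehat u(n))\|_{\ell^2}$ and $L^1\le L^2$ with no gain; a single exponential $c\,e^{2i\pi n_0\theta}$ with $n_0\simeq 2^j$ has $L^1$-norm $|c|$, not $2^{-j/2}|c|$. The factor $2^{-j/2}$ can only come from cancellation reflecting the smoothness of $n\mapsto a_n$ across the block, i.e.\ from the $\ell^2$-size of the coefficient \emph{differences}. This is exactly what the paper supplies with its elementary lemma $\|\varphi\|_{L^1(\T)}\le\frac{2}{\sqrt\pi}\sqrt{\|\varphi\|_{L^2}\,\|(1-z)\varphi\|_{L^2}}$ applied to de la Vall\'ee Poussin blocks: on a block at frequency $2^n$ it pairs $\|f'\|_{\ell^2(I_n)}$ (coefficient values) with $\|f''\|_{L^2(I_n)}+2^{-n}\|f'\|_{\ell^2(I_n)}$ (coefficient differences), and this geometric mean is precisely where the mixed quantity $\sqrt{AB}$ comes from. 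In your scheme $f''$ only enters as a negligible ``error'' in $a_n\approx-f'(n)$, so even a repaired version of your block estimate could not produce the $f'$--$f''$ geometric mean that the statement asserts.

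There are also bookkeeping inconsistencies that signal the same problem. With your claimed gain the weight would be $2^j\cdot 2^{-j/2}=2^{j/2}$, yet you sum $2^{3j/2}\|(f'(n))_{n\simeq 2^j}\|_{\ell^2}$; splitting $2^{3j/2}=2^{(3/2-\alpha)j}2^{\alpha j}$ and pairing $2^{\alpha j}$ against ``a convergent geometric series'' does not work ($\sum_j 2^{2\alpha j}$ diverges), and the surviving quantity $\|x^{3/2-\alpha}f'\|$ is neither $A$ nor $B$. The correct mechanism (as in the paper's Proposition on the $B^1_1$-norm) is to split the sum over blocks at a threshold $N$, using the weight $x^{1/2-\alpha}$ for $n\le N$ and $x^{1/2+\alpha}$ for $n>N$, each giving a geometric series with constant $O(\alpha^{-1/2})$, and then to optimize over $N$, which produces the geometric mean of the $\pm\alpha$ norms. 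So: your reduction steps and the Hardy inequality are sound and coincide with the paper, but the key $L^1$-block estimate needs the $(1-z)$-cancellation lemma (or an equivalent summation-by-parts argument inside the block), without which both the $2^{-j/2}$ gain and the $\sqrt{AB}$ structure are out of reach.
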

We postpone the proof of Theorem \ref{R+} to the end of this section. 

\subsection{Examples of applications}
We give several applications of Theorem \ref{R+}.

\begin{example}[The semigroup $\lambda_g \mapsto (1+|g|)^{-z} \lambda_g$ on free groups] For every $z \in C$ with positive real part, the formula 
\[ (1+n)^{-z} = \frac{1}{\Gamma(z)} \int_0^\infty t^{z-1} e^{-t} e^{-tn} dt,\]
(where $\Gamma(z) = \int_0^\infty t^{z-1} e^{-t} dt$ is the Gamma function) together with the fact that $S_t^1\colon \lambda_g \mapsto e^{-t|g|} \lambda_g$ is unital completely positive for every $t>0$ implies that $\lambda_g \mapsto (1+|g|)^{-z} \lambda_g$ is unital completely positive if $z \in \R_+$, and unital completely bounded with completely bounded norm less than $\Gamma(\mathrm{Re}(z))/|\Gamma(z)|$ otherwise. This estimate is far from optimal: if $\mathrm{Re}(z) \geq 3$, the rapid decay property implies that this cb norm is less than $\sum_{n \geq 0} (1+n)^{-2}$, whereas $\Gamma(\mathrm{Re}(z))/|\Gamma(z)|$ is unbounded. Theorem \ref{R+} gives complementary estimates for this norm in the regime $\mathrm{Re}(z) \leq 3$. Namely let $z=a+ib$ with $0<a\leq 3$. Taking $f(x) = (1+x)^{-z}$ and $\alpha = \min(1,a)/2$ then 
\[ \|x^{\frac 1 2 \pm \alpha}f'\|_{L^2(0,\infty)} \leq \|(1+x)^{\frac 1 2 \pm \alpha}f'\|_{L^2(1,\infty)} = \frac{|z|}{\sqrt{2a \pm 2\alpha}} \leq \frac{|z|}{\sqrt a} \]
and similarly 
\[ \|x^{\frac 3 2 \pm \alpha}f''\|_{L^2(0,\infty)} \leq \frac{|z(1+z)|}{\sqrt{2a \pm 2\alpha}} \leq \frac{|z(1+z)|}{\sqrt{a}} .\]
Hence Theorem \ref{HSS} and Theorem \ref{R+} imply that the completely bounded norm of $\lambda_g \mapsto (1+|g|)^{-z} \lambda_g$ is less than $C\frac{|z|}{\mathrm{Re}(z)} \sqrt{|1+z|}$. This implies that for $\omega \in [0,\frac \pi 2)$, the cb norm of $\lambda_g \mapsto (1+|g|)^{-z} \lambda_g$ is bounded by $C(1+\tan \omega)^{\frac 3 2}$ on $\{z \in \C, |\arg z| \leq \omega\}$.

The same estimates also hold for the multiplier $\lambda_g \mapsto \max(1,|g|)^{-z}\lambda_g$. This follows from Theorem \ref{HSS} and from the following general inequality applied to $a_n = \max(1,n)^{-z} - (n+1)^{-z}$. If $a_n$ is any sequence of complex numbers,
\[ \|(a_{j+k+1})_{j,k \geq 0}\|_{S^1} \leq \|(a_{j+k})_{j,k \geq 0}\|_{S^1} \leq |a_0|+2 \|(a_{j+k+1})_{j,k \geq 0}\|_{S^1}\]
the first inequality if obvious, whereas the second is the triangle inequality for the trace norm $\| \cdot\|_{S^1}$ applied to the decomposition
\[a_{j+k} = a_{0} 1_{j=k=0} + a_{k}1_{j=0,k>0} + a_{j+k}1_{j>0}.\]

Note also that by Corollary \ref{hyperbolicgroup}, the same results hold for the semigroups $\lambda_g \mapsto (1+|g|)^{-z} \lambda_g$ and $\lambda_g \mapsto \max(1,|g|)^{-z}\lambda_g$ on every hyperbolic group, up to some multiplicative constant depending on the group.
\end{example}

\begin{example}[Fej\'er Kernel]\label{Fejer} Given $N\in {\N}$, let $F_N(k)=(1-\frac {k}{N})\chi_{[0,N]}(k)$. Define Fej\'er multiplier as
 $$m_{F_N}: \lambda_ g\mapsto F_N(|g|)\lambda_g.$$
 Then $\|m_{F_N}\|_{cb}\simeq \log N$ on free group von Neumann algebras. 
 In fact, 
\begin{equation} \label{eq=Fejer_unbounded}
\|(F_N(k+j)-F_N(k+j+1))_{k,j}\|_{S^1}=\|(\frac {1}{N})_{k+j\leq N-1}\|_{S^1}\simeq \log N.\end{equation}

If one applies Theorem \ref{R+} for the function $f(x) = (1-x)^\delta\chi_{[0,1]}(x)$, we see that 
$$\sup_N \|(F_N(k+j)^\delta-F_N(k+j+2)^\delta)_{k,j}\|_{S^1} <\infty$$
if $\delta> \frac 3 2$, because $f'' \in L^2$ if and only if $\delta>\frac 3 2$. We will see in Example \ref{Riesz} that the previous inequality actually holds for all $\delta>1$.


 
\end{example} 

 \begin{example}[Bochner-Riesz Mean]\label{Riesz} Bochner-Riesz mean is a ``smoothed" Fej\'er multiplier in modern harmonic analysis. Given $N\in {\N}$, let $B^\delta_N(k)=(1-\frac {k^2}{N^2})^\delta\chi_{[0,N]}(k)$ for $\delta\in {\C}.$ Define Bochner-Riesz multiplier as $$m^\delta_{B_N}: \lambda_g\mapsto B^\delta_N(|g|)\lambda_g.$$

As for the Fej\'er kernel, a direct application or Theorem \ref{R+} would yield that the Bochner-Riesz kernels are completely bounded on von Neumann algebras of hyperbolic groups if $\mathrm{Re}(\delta)>\frac 3 2$. However, using the known boundedness properties of Bochner-Riesz multipliers on $\mathcal L(\Z)$, one can decrease this to $\mathrm{Re}(\delta)>1$. 

Before that, we observe that, as in $\mathcal L(\Z^n)$, the problem of complete boudnedness of the Bochner-Riesz and Fej\'er multipliers on hyperbolic groups are equivalent, in the sense that for all $\mathrm{Re}(\delta)\geq 0$ and all hyperbolic groups, there is a constant $C$ such that for all $N$,
\begin{equation}\label{eq:equivalence_Fejer_BochnerRiesz} \frac{1}{C} \leq \frac{\|m^\delta_{B_N}\|_{cb(\mathcal L\Gamma)}}{\|m^\delta_{F_N}\|_{cb(\mathcal L\Gamma)}} \leq C.\end{equation}
Indeed, given $\mathrm{Re}(\delta)\geq 0$, let $f_\delta$ and $g_\delta$ be a compactly supported $C^2$ functions on $[0,\infty)$ satisfying $f_\delta(x) = 1/g_\delta(x) = (1+x)^\delta$ for all $x \in [0,1]$, so that $B^\delta_N (k) = f_\delta(k/N) F^\delta_N(k)$ and $F^\delta_N (k) = g_\delta(k/N) F^\delta_N(k)$. By Corollary \ref{R+} $\sup_N \| (f_\delta(\frac{i+j}{N}) - f_\delta(\frac{i+j+1}{N}) )_{j,k}\|_{S^1}<\infty$ and same for $g_\delta$. By Corollary \ref{hyperbolicgroup}, the multipliers corresponding to $f_\delta(|g|/N)$ and $g_\delta(|g|/N)$ are therefore bounded uniformly in $N$ on every hyperbolic group. This proves \eqref{eq:equivalence_Fejer_BochnerRiesz}.

We can now prove that Bochner-Riesz multipliers (and hence the Fej\'er multipliers by \eqref{eq:equivalence_Fejer_BochnerRiesz}) are completely bounded on every hyperbolic groups, and in particular on all free groups, if $\mathrm{Re}\delta>1$. 
This follows from Corollary \ref{hyperbolicgroup} and the estimate
\begin{equation}\label{eq:Bochner_Riesz_delta>1}
\forall \mathrm{Re}(\delta)>1, \sup_N \| ( B^\delta_N(j+k) - B^\delta_N(j+k+1) )_{j,k \geq 0}\|_{S^1} <\infty.
\end{equation}

Let us prove \eqref{eq:Bochner_Riesz_delta>1}. By derivating we can write
\[ B^\delta_N(k) - B^\delta_N(k+1) = \int_0^1 2 \delta (1-\frac{(k+t)^2}{N^2})^{\delta-1} \chi_{[0,N]}(k+t) \frac{k+t}{N^2} dt.\]
Denote by $B_{N,t}^{\delta-1}(k) = (1-\frac{(k+t)^2}{N^2})^{\delta-1} \chi_{[-N,N]}(k+t)$; for $t=0$ and $k \geq 0$ this is $B_N^{\delta -1}$. Let $f_1$ be a compactly supported $C^2$ function on $[0,\infty)$ such that $f(x) = -x^2/2$ on $[0,1]$. The previous equality becomes
\[ B^\delta_N(k) - B^\delta_N(k+1) = \int_0^1 2 \delta B_{N,t}^{\delta-1}(k) ( \frac{t-1/2}{N^2} + f_1(\frac{k}{N}) - f_1(\frac{k+1}{N}) ) dt.\]
The trace norm of the matrix $(B_{N,t}^{\delta-1}(j+k) \frac{t-1/2}{N^2} )_{j,k \geq 0}$ is less than the sum of the absolute values of its entries, which is less than $\frac 1 2$. Moreover by Corollary \ref{R+} the trace norm of the matrix $(f_1(\frac{j+k}{N}) - f_1(\frac{j+k+1}{N}))$ is bounded uniformly in $N$, by some constant $C$. Therefore Lemma \ref{lemma=HankelSchur} and the previous equality imply that the trace norm of $( B^\delta_N(j+k) - B^\delta_N(j+k+1) )_{j,k \geq 0}$ is less than
\begin{eqnarray*}
&&|\delta| + 2 C |\delta| \sup_{t\in [0,1]} \| \sum_{n \in \Z} B_{N,t}^{\delta - 1}(n) e^{2i\pi n \theta}\|_{L^1(\R/\Z)}\\
&\leq& |\delta| + 2 C |\delta| \|m_{B_N}^{\delta-1}\|_{cb(L^\infty(\R))}\leq |\delta| + 2 C |\delta| e^{C|Im \delta|^2}
\end{eqnarray*}
The first inequality follows by  embedding $L^\infty(\R/\Z)$ into $L^\infty(\R)$ via   $ \sum a_ne^{2i\pi n \theta}\mapsto  \sum a_ne^{2i\pi n (x-t)}$. The second inequality is quoted from \cite[Prop. 10.2.2]{G14}, and the constant $C$ depends only on $\mathrm{Re}(\delta)$. This proves \eqref{eq:Bochner_Riesz_delta>1} and 
\begin{eqnarray}\label{ConstantC}
\|m_{B_N}^{\delta}\|_{cb({\cal L}(\Gamma))}\leq e^{C+C|Im \delta|^2}
\end{eqnarray}
for $\mathrm{Re}(\delta)>1$ with $C$ only depends on $\mathrm{Re}(\delta)$ and $\Gamma$. 
We can observe that the assumption $\mathrm{Re}(\delta)>1$ is needed. Indeed, for $\delta=1$, \eqref{eq:Bochner_Riesz_delta>1} does not hold because of  \eqref{eq=Fejer_unbounded} and \eqref{eq:equivalence_Fejer_BochnerRiesz}, so $m_{B_N}^1$ is not c.b. on ${\cal L}({\mathbb F}_2)$ uniformly in $N$ by Theorem \ref{HSS}. 
 
 Fix  $0<\varepsilon<1$, let $C$ be the constant in \eqref{ConstantC}  such that the multiplier $F(z)=m_{B_N}^z e^{Cz^2-5C}$ is c.b. on ${\cal L}(\Gamma)$ uniformly on the complex line $\{z; \mathrm{Re}(z)=1+\varepsilon\}$. Note that $F(z)$    is c.b. on $L^2(\hat\Gamma)$ uniformly in $N$ on the imaginary line $\{z; \mathrm{Re}(z)=0\}$. By complex interpolation and duality, we  get $ m_{B_N}^\delta$ is  c.b. on $L^p(\hat \Gamma)$ uniformly in $N$ for any $|\frac2p-1|< \mathrm{Re}(\delta),1\leq p\leq\infty$. The same holds for $ m_{F_N}^\delta$ because of (\ref{eq:equivalence_Fejer_BochnerRiesz}).
\end{example}

\begin{example}\label{ex=heat_sgp_real} Given $r>0$, let $\alpha=\frac {\min\{r, 1\}}2$ and $f(x) = e^{-x^r}$. We then have, 
$$\|x^{\frac 1 2\pm\alpha} f'\|_{L^2(0,\infty)}\leq c \sqrt r, \|x^{\frac 3 2\pm\alpha}f''\|_{L^2(0,\infty)}\leq c(1+ r)\sqrt r$$
Applying Corollary \ref{R+}, we get
\[ \sup_{t\geq 0} \|(e^{-t(j+k)^r} - e^{-t(j+k+1)^r})_{j,k\geq 0}\|_{S^1}\leq c(1+r).\]
Moreover the order $r$ as $r$ goes to $\infty$ is optimal. Indeed, if $n$ is the integer part of $r$ and $t=n^{-r}$ using the inequality $\|A\|_{S^1} \geq \sum_{j=0}^n |A_{j,n-j}|$ we have
\[ \|(e^{-t(j+k)^r} - e^{-t(j+k+1)^r})_{j,k\geq 0}\|_{S^1} \geq (n+1)\left(e^{-1} - e^{-(1+1/n)^r}\right) \sim r(e^{-1} - e^{-e})\]
as $r \to \infty$.
\end{example}

\begin{example}\label{ex=heat_sgp_complex} For every $z=a+bi\in \C$ with $|arg z| \leq \omega<\frac \pi2$, let $f(x)=e^{-zx^r}$. Denote $K=(1+\tan^2 \omega)$. Then $|z|^2\leq Ka^2$ and 
\begin{eqnarray*}
 |f'|^2&=&|zrx^{r-1}e^{-zx^r}|^2\leq Ka^2r^2x^{2r-2}e^{-2ax^r} \\
 |f''|^2&=&|-z^2r^2x^{2r-2}e^{-zx^r}+zr(r-1)x^{r-2}e^{-zx^r}|^2\\
 &\leq& 2K^2 |a^2r^2x^{2r-2}e^{-ax^r}|^2+2K|a r(r-1)x^{ r-2}e^{-ax^r}|^2.
 \end{eqnarray*}
Setting $\alpha=\min\{\frac r2,1\}$ we then have using the change of variable $s= a x^r$,
\begin{eqnarray*}
 \|x^{\frac12\pm\alpha}f'\|_{L_2(\R_+)}^2&\leq& K\int_{\R_+}a^2r^2x^{2r\pm 2\alpha-1}e^{-2ax^r}dx\\
 &=&K\int_{\R_+}r a^{\mp\frac {2\alpha} r}s^{1 \pm \frac{2\alpha}{r}}e^{-2s}ds\simeq K a^{\mp\frac {2\alpha} r}r.\\
 \|x^{\frac32\pm\alpha}f''\|_{L_2(\R_+)}^2&\leq&2K^2\int_{\R_+} a^4r^4x^{4r\pm 2\alpha-1}e^{-2ax^r}dx\\ &&\hskip .5cm +2K\int_{\R_+} a^2r^2(r-1)^2x^{2r\pm 2\alpha-1}e^{-2ax^r}dx\\
 & = &2K^2 \int_{\R_+} a^{\mp\frac {2\alpha} r}r^3s^{3 \pm \frac{2\alpha}{r}}e^{-2s}ds\\ &&\hskip .5cm+2K\int_{\R_+} a^{\mp\frac {2\alpha} r }r(r-1)^2 s^{1 \pm \frac{2\alpha}{r}}e^{-2s}ds\\
 &\simeq&2K^2a^{\mp\frac {2\alpha} r}r^3+2K a^{\mp\frac {2\alpha} r }r(r-1)^2.
 \end{eqnarray*}
 Corollary \ref{R+} yields that
\[ \sup_{t\geq 0} \|(e^{-zt(j+k)^r} - e^{-zt(j+k+1)^r})_{j,k\geq 0}\|_{S^1}<c(1+(\tan \omega)^{\frac 3 2}) (1+ r).\]
\end{example} 



\subsection{The proof}\label{subsection=proof}
In the sequel we consider the unit circle $\T = \{e^{2i\pi t}, t \in \R/\Z\}$ equipped with the Lebesgue probability measure, and the unit disk $\mathbf D = \{z \in \C, |z|<1\}$ equipped with the Lebesgue probablity measure $\frac{dz}{\pi}$.

We now turn to the proof of Theorem \ref{R+}. The proof relies on Peller's characterization of trace class Hankel matrices \cite{P80} that we now recall. With the formulation given in \cite[Theorem 3.1]{HSS10} (which gives very good constants), Peller's theorem states that a Hankel matrix $(a_{j+k})_{j,k \geq 0}$ belongs to the trace class if and only if the function $g(z) = \sum_{n \geq 0} (n+1)(n+2) a_n z^n$ belongs to $L^1(\mathbf{D},\frac{dz}{\pi})$, and
\[ \frac \pi 8 \| g\|_{L^1(\mathbf{D},\frac{dz}{\pi})} \leq \|(a_{j+k})_{j,k\geq 0}\|_{S^1} \leq \|g\|_{L^1(\mathbf{D},\frac{dz}{\pi})}.\]
The condition $\sum_{n \geq 0} (n+1)(n+2) a_n z^n \in L^1(\mathbf{D})$ is one of the equivalent conditions for the series $\sum_{n \geq 0} a_n z^n$ to belong to the Besov space of analytic functions $B_1^1$. In the sequence we will work with another classical condition, that is more suited for our proof.

Consider the classical de la Vall\'ee Poussin kernels $(W_n)_{n \geq 0}$. The $W_n$'s are functions on $\T$ given by their Fourier coefficients. $W_0(z) = 1+z$ and for $n>0$
\[\widehat{W_n}(k)= \left\{ 
\begin{array}{ll}
2^{-n+1}(k-2^{n-1}) & \textrm{if }2^{n-1} \leq k \leq 2^n\\
2^{-n}(2^{n+1} - k) & \textrm{if }2^{n} \leq k \leq 2^{n+1}\\
0 & \textrm{otherwise.} 
\end{array}\right.\]

The Besov space $B_1^1$ of analytic functions is the Banach space of series $\varphi(z) = \sum_{n \geq 0} a_n z^n$ with $a_n \in \C$ such that 
\begin{equation}\label{eq:def_B11} \|\varphi\|_{B_1^1} = \sum_{n \geq 0} 2^n \|W_n \ast \varphi\|_{L^1(\T)} < \infty.\end{equation}
We refer to \cite{P03} for the equivalence of these definitions of $B_1^1$, or for the following formulation of Peller's theorem~: there is a constant $C>0$ such that for every Hankel matrix $A=(a_{j+k})_{j,k\geq 0}$,
\begin{equation}\label{eq=peller} C^{-1} \| \sum_{n \geq 0} a_n z^n\|_{B_1^1} \leq \|A\|_{S^1} \leq C \| \sum_{n \geq 0} a_n z^n\|_{B_1^1}.\end{equation}

For a function $f\colon [0,\infty) \to \R$ and a subinterval $I$ of $[0,\infty)$ we adopt the following notation
\begin{equation}\label{eq=def_l2} \|f \|_{L^2(I)} = \left(\int_I |f(x)|^2 dx\right)^{\frac 1 2}, \|f \|_{\ell^2(I)} = \left(\sum_{k \in I \cap \mathbf N} |f(k)|^2\right)^{\frac 1 2}.\end{equation}
We will prove the following upper estimate on the $B_1^1$-norm of a function with smooth symbol.

\begin{prop}\label{prop=Hankel_with_C^1_symbol} Let $f\colon [0,\infty) \to \R$ a continuous function of class $C^1$ on $(1,\infty)$, and $\frac12\geq\alpha >0$. Then  
\[ \| \sum_{n \geq 0} f(n) z^n\|_{B_1^1} \leq C\left(|f(0)|+\frac {1}{\sqrt{ \alpha}} (\widetilde A+\sqrt{\widetilde A \widetilde B})\right),\]
for a universal constant $C$, where 
\[\widetilde A = \sqrt{\| x^{\frac 1 2 - \alpha} f\|_{\ell^2([1,\infty))} \| x^{\frac 1 2 + \alpha} f\|_{\ell^2([1,\infty))}},\]
\[\widetilde B = \sqrt{\| x^{\frac 3 2 - \alpha} f'\|_{L^2(1,\infty)} \| x^{\frac 3 2 + \alpha} f'\|_{L^2(1,\infty)}}.\]
\end{prop}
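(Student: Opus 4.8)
The strategy is to estimate $\|W_n \ast \varphi\|_{L^1(\T)}$ for $\varphi(z) = \sum_{k \geq 0} f(k) z^k$ dyadically, and then sum the bounds $\sum_n 2^n \|W_n \ast \varphi\|_{L^1}$ against the weighted $\ell^2$ and $L^2$ norms appearing in $\widetilde A$ and $\widetilde B$. Since $\widehat{W_n}$ is supported on the dyadic block $[2^{n-1}, 2^{n+1}]$, the function $W_n \ast \varphi$ only sees the Fourier coefficients $f(k)$ for $k$ in that range; denote $I_n = [2^{n-1}, 2^{n+1}] \cap \N$ and $\psi_n(z) = \sum_{k \in I_n} f(k) z^k$, so $W_n \ast \varphi = W_n \ast \psi_n$ and $\|W_n \ast \varphi\|_{L^1} \leq \|W_n\|_{L^1} \|\psi_n\|_{L^1} \leq C \|\psi_n\|_{L^2}$ (using that $\|W_n\|_{L^1}$ is bounded uniformly in $n$, and Cauchy--Schwarz on the circle). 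The first crude route gives $\|\psi_n\|_{L^2} = (\sum_{k \in I_n} |f(k)|^2)^{1/2}$ by Parseval, but summing $\sum_n 2^n (\sum_{k \in I_n}|f(k)|^2)^{1/2}$ only brings in $f$, not $f'$, and does not give the claimed two-parameter bound. So the real point is to exploit cancellation: on each block one should not estimate $\psi_n$ but rather split $f$ on $I_n$ into its average part and its oscillating part.

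\textbf{The key estimate.} On the block $I_n$, write $f(k) = c_n + (f(k) - c_n)$ where $c_n$ is (say) the value of $f$ at the left endpoint $2^{n-1}$, or a suitable average. The contribution of the constant piece $c_n \sum_{k \in I_n} z^k$ to $W_n \ast \varphi$ is controlled because $W_n$ has mean zero in the relevant sense / the geometric sum $\sum_{k \in I_n} z^k$ has small $L^1$ norm away from $z = 1$ after convolving with the smooth bump $W_n$ — more precisely $\|W_n \ast \sum_{k \in I_n} z^k\|_{L^1}$ is bounded by an absolute constant, so this piece contributes $\lesssim 2^n |c_n|$ which we recognize (after summing) as essentially $\sum_n 2^{n/2} \cdot 2^{n/2}|c_n| \lesssim \widetilde A$ up to the weights $2^{\pm n \alpha}$ and the $1/\sqrt\alpha$ from summing the geometric-type series $\sum_n 2^{-2\alpha n}$ against $\sum_n 2^{2\alpha n}|\cdots|^2$ by Cauchy--Schwarz. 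The oscillating piece $f(k) - c_n$ is handled by the fundamental theorem of calculus: $|f(k) - c_n| \leq \int_{2^{n-1}}^{2^{n+1}} |f'(x)| dx$, and more efficiently, by writing $f(k) - c_n = \int_{2^{n-1}}^k f'(x) dx$ and using a discrete Hardy-type / summation-by-parts argument to bound $\|\sum_{k \in I_n}(f(k) - c_n) z^k\|_{L^1}$ (or $L^2$) by $2^n (\int_{I_n} |f'(x)|^2 dx)^{1/2}$ up to constants — the extra factor $2^n$ being the length of the block, partially compensated by the $2^{-n}$ one gains because $f(k) - c_n$ is a ``derivative-sized'' quantity. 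Summing $\sum_n 2^n \cdot (\text{that bound})$ against the weights and using Cauchy--Schwarz in $n$ produces the term $\sqrt{\widetilde A \widetilde B}$ (the geometric mean arising from optimizing the split between $c_n$ and the oscillation, i.e. interpolating between the two scales). The $|f(0)|$ term simply absorbs the $n=0$ block where $W_0 = 1+z$ is special.

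\textbf{Main obstacle.} The technical heart — and where I expect to spend the most care — is the block-by-block $L^1(\T)$ estimate for $W_n \ast \psi_n$ that trades the lost factor of $2^n$ (block length) for a gain coming from the smoothness of $f$, i.e. showing something of the shape $\|W_n \ast \varphi\|_{L^1(\T)} \lesssim |c_n| + 2^{n/2}\bigl(\int_{I_n}|f'(x)|^2 dx\bigr)^{1/2}$ or a variant thereof that is sharp enough. This requires carefully using that $W_n$ is a smooth (Fourier-localized) bump so that convolution by it behaves like a nice averaging operator, combined with an Abel summation converting $\sum (f(k) - c_n) z^k$ into $\sum f'(\text{something}) \cdot (\text{partial sums of } z^k)$ and controlling the partial geometric sums in $L^1(\T)$ (which are $O(\log)$ in general but $O(1)$ after the smoothing). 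The other delicate bookkeeping step is organizing the two weight families $x^{1/2 \pm \alpha}$: after the dyadic estimate one has sums like $\sum_n 2^{n/2} a_n$ with $a_n \geq 0$, and one writes $2^{n/2} = 2^{-n\alpha} \cdot 2^{n(1/2 + \alpha)}$ (or the symmetric choice), applies Cauchy--Schwarz to get $(\sum_n 2^{-2n\alpha})^{1/2}(\sum_n 2^{n(1+2\alpha)} a_n^2)^{1/2}$, where the first factor is $\lesssim 1/\sqrt\alpha$ and the second is comparable to the relevant weighted norm; doing this symmetrically in $\pm\alpha$ and taking geometric means yields exactly the stated $\frac{1}{\sqrt\alpha}(\widetilde A + \sqrt{\widetilde A \widetilde B})$. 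Finally one invokes Peller's theorem in the form \eqref{eq=peller} to pass from the $B_1^1$ bound to the trace-class bound that feeds into Theorem~\ref{R+}.
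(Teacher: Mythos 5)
Your overall architecture (Peller plus a dyadic decomposition with the de la Vall\'ee Poussin kernels, then a weighted Cauchy--Schwarz in $n$) is the same as the paper's, but the analytic heart of the argument --- which you yourself flag as the unresolved ``main obstacle'' --- is missing, and the mechanism you sketch for it does not work. Convolving with $W_n$ does not reduce the sharply truncated geometric sums coming from Abel summation to $O(1)$ in $L^1(\T)$: the truncation point $k$ lies \emph{inside} the band $[2^{n-1},2^{n+1}]$ where $\widehat{W_n}$ is of size comparable to $1$, so the Fourier coefficients of $W_n \ast \sum_{j \leq k} z^j$ still have a unit-size jump at $k$ and the $L^1$ norm remains of order $n$; the logarithm is not removed by the smoothing. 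The paper's substitute for this step is Lemma \ref{lemma=elementary}, the interpolation bound $\|\varphi\|_{L^1(\T)} \lesssim \sqrt{\|\varphi\|_{L^2}\,\|(1-z)\varphi\|_{L^2}}$, applied to $W_n \ast \varphi$: the factor $(1-z)$ produces the discrete differences of $f$ (controlled by $\|f'\|_{L^2(I_n)}$) and of $\widehat{W_n}$ (of size $2^{-n}$), giving Lemma \ref{lemma=domination_L1norm}, i.e. $2^n\|W_n\ast\varphi\|_{L^1} \lesssim \|x^{1/2}f\|_{\ell^2(I_n)} + \sqrt{\|x^{1/2}f\|_{\ell^2(I_n)}\,\|x^{3/2}f'\|_{L^2(I_n)}}$. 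Note that the geometric mean $\sqrt{\widetilde A \widetilde B}$ must already appear at the block level: your additive split into $c_n$ plus an oscillating part can at best give a block bound of the shape $|c_n| + 2^{n/2}\|f'\|_{L^2(I_n)}$, and summing that yields $\frac{1}{\sqrt\alpha}(\widetilde A + \widetilde B)$, which is strictly weaker than the stated $\frac{1}{\sqrt\alpha}(\widetilde A + \sqrt{\widetilde A \widetilde B})$ when $\widetilde B \gg \widetilde A$ (AM--GM goes the wrong way, so no ``optimization of the split'' at the summation stage can recreate the product).

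The final bookkeeping is also not correct as stated. The ``symmetric choice'' $2^{n/2} = 2^{n\alpha}\cdot 2^{n(1/2-\alpha)}$ produces the divergent factor $(\sum_n 2^{2n\alpha})^{1/2}$, and in fact the one-sided bound $\sum_n 2^{n/2}\|f\|_{\ell^2(I_n)} \lesssim_\alpha \|x^{1/2-\alpha}f\|_{\ell^2([1,\infty))}$ is false (test $f$ concentrated at a single large integer $M$: the left side is of order $M^{1/2}$, the right side of order $M^{1/2-\alpha}$), so you cannot obtain the geometric mean $\widetilde A$ by taking geometric means of two separately valid estimates. What works, and what the paper does, is to split the sum over $n$ at a threshold $N$: for $n \leq N$ dominate $x^{1/2}$ by $2^{\alpha(n+1)}x^{1/2-\alpha}$ (total cost $2^{\alpha N}/\sqrt\alpha$ times $\|x^{1/2-\alpha}f\|_{\ell^2}$), for $n > N$ dominate by $2^{-\alpha(n-1)}x^{1/2+\alpha}$ (cost $2^{-\alpha N}/\sqrt\alpha$ times $\|x^{1/2+\alpha}f\|_{\ell^2}$), and then optimize over $N$; the same device is applied with $f$ replaced by $xf'$. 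So both the key block estimate and the summation step need to be replaced before your plan becomes a proof of the proposition as stated.
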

Before we prove the Proposition, we explain how it implies Theorem \ref{R+}.

\begin{proof}[Proof of Theorem \ref{R+}]
Let $f$ be as in Theorem \ref{R+}. Note that both $ A$ and $ B$ are unchanged if the function $f$ is replaced by $x\mapsto f(t x)$. We can therefore restrict ourselves to the case $t=1$.

We first prove the inequalities
\begin{equation}\label{eq=discrete_continuous} \left( \sum_{n\geq1} n^{2\beta} |f(n+1)- f(n)|^2\right)^{\frac 1 2} \leq \|x^\beta f'\|_{L^2(1,\infty)}\end{equation}
for $\beta = \frac 1 2+\alpha$ and $\beta = \frac 1 2-\alpha$, and 
\begin{eqnarray}\label{eq=continuous} \|x^\beta (f'(x+1)-f'(x))\|_{L^2(1,\infty)} 
&\leq \|x^\beta f''\|_{L^2(1,\infty)}
\end{eqnarray}
for $\beta = \frac 3 2+\alpha$ and $\beta = \frac 3 2-\alpha$. Together with \eqref{eq=peller} and Proposition \ref{prop=Hankel_with_C^1_symbol}, they will imply that
\begin{equation}\label{eq=intermediate_ineq}
 \left\|\left(f(j+k)-f(j+k+1)\right)_{j,k \geq 0}\right\|_{S^1}
 \leq C\left( |f(0)-f( 1)|+\frac {1 }{\sqrt{ \alpha}}( A + \sqrt{ A  B} )\right).
\end{equation}

For \eqref{eq=discrete_continuous}, note that $n^\beta \leq x^\beta$ for every integer $n$ and $x \in [n,n+1]$. By Cauchy-Schwarz inequality we have 
\[ n^{\beta} |f(n+1)- f(n)| \leq n^\beta \|f'\|_{L^2(n,n+1)} \leq \|x^\beta f'\|_{L^2(n,n+1)}.\]
By taking the square and summing for $n \geq 1$ we get \eqref{eq=discrete_continuous}. For \eqref{eq=continuous} write $f'(x+1)-f'(x) = \int_{0}^1 f''(x+s) ds$ and use the triangle inequality to get
\[ \|x^\beta (f'(x+1)-f'(x))\|_{L^2(1,\infty)} \leq \int_0^1 \|x^\beta f''(x+s)\|_{L^2(1,\infty)}ds,\]
from which \eqref{eq=continuous} follows because $\|x^\beta f''(x+s)\|_{L^2(1,\infty)} \leq \|x^\beta f''(x)\|_{L^2(1,\infty)}$ for all $0<s<1$.

We now prove the theorem. If $ B = \infty$ there is nothing to prove. So let us assume that $t=1$ and $ B<\infty$. Theorem \ref{R+} follows from \eqref{eq=intermediate_ineq} and the inequalities 
\[ |f(0) - f(1)| \leq \|f'\|_{L^1(\R_+)} \leq \frac{\sqrt 2}{\sqrt\alpha} A\]
and 
\[  A \leq \frac{1}{\sqrt{1-\alpha^2}}  B.\]
To prove the first inequality, decompose the integral and use the Cauchy-Schwarz inequality
\begin{eqnarray*} \|f'\|_{L^1} &= &\int_0^s \frac{ x^{\frac 1 2 - \alpha} f'(x)}{x^{\frac 1 2 - \alpha}} dx  +  \int_s^\infty \frac{f'(x) x^{\frac 1 2 + \alpha}}{x^{\frac 3 2 + \alpha}} dx \\ & \leq& \frac{ s^\alpha}{\sqrt{2\alpha}} \| x^{\frac 1 2 - \alpha} f'\|_{L^2(\R_+)}  + \frac{ s^{-\alpha}}{\sqrt{2\alpha}} \| x^{\frac 1 2 - \alpha} f'\|_{L^2(\R_+)}.\end{eqnarray*}
Taking the infimum over $s>0$ we get $\|f'\|_{L^1(\R_+)} \leq \frac{\sqrt 2}{\sqrt\alpha} A$ as claimed.

Let us move to the inequality $ A \leq \frac{1}{\sqrt{1-\alpha^2}}  B$. By the assumption $ B<\infty$, we have that $f'' \in L^1([1,\infty)$ and hence $\lim_{x \to \infty} f'(x)$ exists. Since $f$ is bounded, this limit is $0$, and we can write $f'(x) = \int_1^\infty x g_r(x) dr$ where $g_r(x) = f''(rx)$. By the triangle inequality
\[ \| x^{\frac 1 2 \pm \alpha} f'\|_{L^2(\R_+)} \leq \int_1^\infty \| x^{\frac 3 2 \pm \alpha} g_r \|_{L^2(\R_+)} dr.\]
By a change of variable 
\[ \| x^{\frac 3 2 \pm \alpha} g_r \|_{L^2(\R_+)} = \frac{1}{r^{2\pm \alpha}} \| x^{\frac 3 2 \pm \alpha} f'' \|_{L^2(\R_+)},\]
and hence using that $\int_1^\infty \frac{dr}{r^{2\pm \alpha}} = \frac{1}{1\pm \alpha}$, we get
\[ \| x^{\frac 1 2 \pm \alpha} f'\|_{L^2(\R_+)} \leq \frac{1}{1\pm \alpha} \| x^{\frac 3 2 \pm \alpha} f'' \|_{L^2(\R_+)}.\]
The inequality  $ A \leq \frac{1}{\sqrt{1-\alpha^2}}  B$ follows.
\end{proof}

We give now the proof of Proposition \ref{prop=Hankel_with_C^1_symbol}. We start with a classical elementary lemma.
\begin{lemma}
\label{lemma=elementary}
If $\varphi \in L^2(\T)$ then
\[ \|\varphi\|_{L^1(\T)} \leq \frac{2}{\sqrt\pi} \sqrt{\|\varphi \|_{L^2(\T)} \| (1-z)\varphi\|_{L^2(\T)}}.\]
\end{lemma}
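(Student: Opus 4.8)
The plan is to prove the pointwise inequality $\|\varphi\|_{L^1(\T)} \leq \frac{2}{\sqrt\pi}\sqrt{\|\varphi\|_{L^2(\T)}\|(1-z)\varphi\|_{L^2(\T)}}$ by a Fourier-analytic argument, exploiting that $\varphi$ and $(1-z)\varphi = \varphi - z\varphi$ control respectively the ``mass'' and the ``oscillation'' of the Fourier coefficients. Write $\varphi(z) = \sum_{k} c_k z^k$ so that $(1-z)\varphi$ has Fourier coefficients $c_k - c_{k-1}$. The key observation is that $\|\varphi\|_{L^1(\T)} = \sup_{\|\psi\|_\infty \leq 1}|\langle \varphi, \psi\rangle|$, but rather than duality I would use a direct splitting: for any integer $N$ (or real parameter), bound the low-frequency part of $\varphi$ using $\|\varphi\|_{L^2}$ via Cauchy--Schwarz over $\{|k|\leq N\}$, and bound the high-frequency part by summing $\sum_{|k|>N}|c_k|$, which one estimates by Cauchy--Schwarz against $\sum |k|^{-2}$ after writing $c_k$ in terms of the differences $c_k - c_{k-1}$ (a telescoping/Abel summation), so that $\sum_{|k|>N}|c_k|^2$-type quantities get replaced by $\frac{1}{N}$ times $\|(1-z)\varphi\|_{L^2}^2$-type quantities. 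Optimizing over $N$ produces the geometric mean $\sqrt{\|\varphi\|_{L^2}\|(1-z)\varphi\|_{L^2}}$.

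More concretely, the cleanest route I would try first is: write $1-z = 2e^{i\theta/2}\cdot(-i\sin(\theta/2))$ on $\T$ (with $z = e^{i\theta}$, $\theta \in (-\pi,\pi]$), so that $|1-z| = 2|\sin(\theta/2)|$. Then for a parameter $\delta \in (0,\pi)$ split $\T = \{|\theta|\leq \delta\} \cup \{|\theta|>\delta\}$. On the first set, $\int_{|\theta|\leq\delta}|\varphi| \leq \sqrt{\frac{\delta}{\pi}}\,\|\varphi\|_{L^2(\T)}$ by Cauchy--Schwarz (the measure of the set being $\frac{\delta}{\pi}$ for the probability normalization). On the second set, $|\varphi| = \frac{|(1-z)\varphi|}{2|\sin(\theta/2)|} \leq \frac{|(1-z)\varphi|}{2|\sin(\delta/2)|}\cdot$ (something comparable, using monotonicity of $|\sin(\theta/2)|$ for $|\theta|\in[\delta,\pi]$ — actually one needs $|\sin(\theta/2)|\geq |\sin(\delta/2)|$ there, which holds), so $\int_{|\theta|>\delta}|\varphi| \leq \frac{1}{2\sin(\delta/2)}\|(1-z)\varphi\|_{L^1(\T)} \leq \frac{1}{2\sin(\delta/2)}\|(1-z)\varphi\|_{L^2(\T)}$. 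Using $\sin(\delta/2)\geq \delta/\pi$ for $\delta\in[0,\pi]$, this gives $\|\varphi\|_{L^1}\leq \sqrt{\frac{\delta}{\pi}}\|\varphi\|_{L^2} + \frac{\pi}{2\delta}\|(1-z)\varphi\|_{L^2}$, and I would then optimize over $\delta>0$; choosing $\delta$ to balance the two terms (i.e. $\delta \sim \pi(\|(1-z)\varphi\|_{L^2}/\|\varphi\|_{L^2})^{2/3}$ up to constants) yields a bound of the shape $c\,\|\varphi\|_{L^2}^{2/3}\|(1-z)\varphi\|_{L^2}^{1/3}$, which is the \emph{wrong} exponents.

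So the naive split does not give the geometric mean $1/2$--$1/2$; the correct proof must use cancellation more cleverly. The right approach: apply the simple split above not to $\varphi$ but after a rescaling trick, or better, use the multiplicative structure. Observe that for any positive integer $m$, $\|\varphi\|_{L^1} = \|\varphi^{?}\|$... no. Instead, the standard proof of such a "Landau--Kolmogorov on $\T$" inequality: for each $N$, $\|\varphi\|_{L^1(\T)}\leq \|S_N\varphi\|_{L^1} + \|\varphi - S_N\varphi\|_{L^1}$ where $S_N$ is a smooth Fourier projection (Fejér or de la Vallée Poussin); bound $\|S_N\varphi\|_{L^1}\leq \|S_N\varphi\|_{L^2}\leq \|\varphi\|_{L^2}$, and bound $\|\varphi - S_N\varphi\|_{L^1}$ by noting its Fourier support is in $\{|k|\gtrsim N\}$, writing $\widehat\varphi(k) = \frac{1}{1 - \overline{\text{(shift)}}}\cdot$... the cleanest: on the high-frequency side, $\widehat{(1-z)\varphi}(k) = \widehat\varphi(k)-\widehat\varphi(k-1)$, and iterating, $\widehat{(1-z)^j\varphi}(k)$ is a $j$-th difference; for $j=1$ one gets $\|\varphi - S_N\varphi\|_{L^1}\lesssim \frac{1}{N}\|(1-z)\varphi\|_{L^1}\cdot\log$-type loss via Fourier multiplier estimates. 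The hard part will be getting the exact constant $\frac{2}{\sqrt\pi}$ with clean $1/2$ exponents; I suspect the intended proof is in fact the elementary pointwise one I sketched but with a sharper endpoint choice, OR a direct Cauchy--Schwarz on $\langle \varphi, \varphi\rangle$ vs $\langle(1-z)\varphi,(1-z)\varphi\rangle$ combined with the identity $\int |\varphi|^2 = \sum|c_k|^2$ — specifically, splitting $\sum_k |c_k|$ (which dominates $\|\varphi\|_{L^1}$? no, it's the reverse) — so I would fall back to the $\delta$-split proof and simply accept whatever exponents and constant it gives, checking against the statement; if the statement genuinely has exponent $1/2$, then the proof must instead run: $\|\varphi\|_{L^1}^2 \leq \|\varphi\|_{L^2}\cdot\|(1-z)\varphi\|_{L^2}\cdot(\text{const})$ is proven by introducing the function $h = \varphi\cdot\overline{(1-z)\varphi}$-weighted rearrangement, which is the genuine obstacle and where I would look most carefully at Peller's or the cited $B_1^1$ literature for the precise elementary lemma.
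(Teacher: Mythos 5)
Your proposal does not reach the stated inequality: the elementary $\delta$-split you carry out gives exponents $2/3$--$1/3$, you correctly notice this is wrong, and the alternatives you then sketch (frequency truncation, Fej\'er/de la Vall\'ee Poussin, telescoping the coefficients) are not carried through and in fact are not needed. The single idea you are missing is \emph{where} to apply Cauchy--Schwarz on the far region. You estimated $\int_{|\theta|>\delta}|\varphi|$ by pulling out $\sup_{|\theta|>\delta}|1-e^{i\theta}|^{-1}\sim \delta^{-1}$ and then bounding $\|(1-z)\varphi\|_{L^1}\le\|(1-z)\varphi\|_{L^2}$; this wastes a power of $\delta$. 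Instead, keep the weight inside and apply Cauchy--Schwarz with it:
\[
\int_{|\theta|>\delta}|\varphi|
=\int_{|\theta|>\delta}\frac{|(1-e^{i\theta})\varphi(e^{i\theta})|}{|1-e^{i\theta}|}\,\frac{d\theta}{2\pi}
\;\le\;\Bigl(\int_{|\theta|>\delta}\frac{1}{|1-e^{i\theta}|^{2}}\,\frac{d\theta}{2\pi}\Bigr)^{1/2}\,\bigl\|(1-z)\varphi\bigr\|_{L^2(\T)} .
\]
The weight $|1-z|^{-1}$ is square integrable away from $z=1$, and the integral can be computed exactly: in the paper's normalization ($z=e^{2i\pi t}$, $t\in[s,1-s]$) it equals $\frac{1}{2\pi\tan(\pi s)}\le\frac{1}{2\pi^{2}s}$, i.e.\ it is $O(1/\delta)$, so its square root contributes only $\delta^{-1/2}$ rather than your $\delta^{-1}$.

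With this correction the two terms are $\sqrt{2s}\,\|\varphi\|_{2}$ and $\frac{1}{\pi\sqrt{2s}}\,\|(1-z)\varphi\|_{2}$, which balance at $s=\|(1-z)\varphi\|_{2}/\bigl(2\pi\|\varphi\|_{2}\bigr)$ (admissible since $\|(1-z)\varphi\|_{2}\le 2\|\varphi\|_{2}$ forces $s\le 1/\pi<1/2$), and each term then equals $\frac{1}{\sqrt{\pi}}\sqrt{\|\varphi\|_{2}\,\|(1-z)\varphi\|_{2}}$, giving exactly the constant $\frac{2}{\sqrt{\pi}}$ and the $\tfrac12$--$\tfrac12$ exponents. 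This is precisely the paper's proof; no appeal to Peller, $B_1^1$, or smooth frequency projections is required, and your suspicion that ``the elementary pointwise one with a sharper endpoint choice'' should work was right --- the sharpening is the $L^2$ (not $L^\infty$) treatment of the weight on the complementary arc.
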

\begin{proof}
Denote $g(z) =(1-z)\varphi(z)$. For any $0<s<1/2$:
\begin{eqnarray*}
 \|\varphi\|_{L^1} & = & \int_{0}^{1} |\varphi(e^{2i\pi t})| d t\\
 & =&\int_{-s}^s |\varphi(e^{2i\pi t})| d t + \int_{s}^{1-s} \frac{1}{|1-e^{2i\pi
 t}|} |(1-e^{2i\pi
 t})\varphi(e^{2i\pi t})| d t\\
 & \leq & \sqrt{2s} \|\varphi\|_2 + \sqrt{\int_s^{1-s} \frac{1}{|1-e^{2i\pi
  t}|^2} d t} \|g\|_2
\end{eqnarray*}
by the Cauchy-Schwarz inequality. The remaining integral can be computed:
\begin{eqnarray*}
 \int_s^{1-s} \frac{1}{|1-e^{2i\pi
  t}|^2} d t & = &2 \int_s^{1/2} \frac{1}{4\sin^2(\pi t)} dt\\
 & = & \frac{1}{2}\left[\frac{-\cos(\pi t)}{\pi \sin(\pi
  t)}\right]_s^{1/2} = \frac{1}{2 \pi \tan(\pi s)} \leq
 \frac{1}{2\pi^2 s}
\end{eqnarray*}
where we used that $\tan x \geq x$ for all $0\leq x \leq \frac \pi 2$.
Taking $s= \|g\|_2/ 2\pi\|\varphi\|_2 \leq 1/2$ we get the desired inequality.
\end{proof}

Let $f$ be as in  Proposition \ref{prop=Hankel_with_C^1_symbol}. Recall the notation introduced in \eqref{eq=def_l2}. We prove the following.

\begin{lemma}\label{lemma=domination_L1norm} Let $I_n = (2^{n-1},2^{n+1}]$. Denote $\varphi(z) = \sum_{n\geq 0} f(n) z^n$. Then for $n \geq 1$
\[ 2^n\|W_n \ast \varphi\|_{L^1(\T)} \leq \frac{4}{\sqrt \pi}(\|x^{\frac 1 2} f\|_{\ell^2(I_n)} + \sqrt{\|x^{\frac 3 2} f'\|_{L^2(I_n)} \|x^{\frac 1 2} f\|_{\ell^2(I_n)}}).\]
\end{lemma}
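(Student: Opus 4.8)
The plan is to estimate $W_n \ast \varphi$ directly using Lemma \ref{lemma=elementary}. The function $W_n \ast \varphi$ is a trigonometric polynomial supported on frequencies in $I_n = (2^{n-1}, 2^{n+1}]$, with $(W_n \ast \varphi)\widehat{\phantom{x}}(k) = \widehat{W_n}(k) f(k)$ and $|\widehat{W_n}(k)| \leq 1$. First I would apply Lemma \ref{lemma=elementary} to $\psi = W_n \ast \varphi$, which gives
\[ \|W_n \ast \varphi\|_{L^1(\T)} \leq \frac{2}{\sqrt\pi}\sqrt{\|W_n \ast \varphi\|_{L^2(\T)}\,\|(1-z)(W_n \ast \varphi)\|_{L^2(\T)}}.\]
By Parseval, $\|W_n \ast \varphi\|_{L^2(\T)}^2 = \sum_k |\widehat{W_n}(k)|^2 |f(k)|^2 \leq \sum_{k \in I_n} |f(k)|^2$. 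This is the ``$\ell^2$'' term; to get the weight $x^{1/2}$ one uses that $2^{n/2} \leq C x^{1/2}$ on $I_n$, so $2^n \|W_n\ast\varphi\|_{L^2}^2 \lesssim \|x^{1/2} f\|_{\ell^2(I_n)}^2$, whence $2^{n/2}\|W_n \ast \varphi\|_{L^2(\T)} \lesssim \|x^{1/2} f\|_{\ell^2(I_n)}$.

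The second factor is where the derivative enters. Multiplication by $(1-z)$ on the Fourier side is the difference operator: the $k$-th coefficient of $(1-z)(W_n\ast\varphi)$ is $\widehat{W_n}(k)f(k) - \widehat{W_n}(k-1)f(k-1)$. I would split this as $\widehat{W_n}(k)(f(k)-f(k-1)) + (\widehat{W_n}(k) - \widehat{W_n}(k-1))f(k-1)$. For the first piece, $|f(k)-f(k-1)| \leq \|f'\|_{L^2(k-1,k)}$ by Cauchy--Schwarz, and summing the squares over $k \in I_n$ controls it by $\|f'\|_{L^2(I_n)}^2$ up to adjusting endpoints; inserting the weight via $1 \lesssim (2^{3n/2}/2^n) \cdot (x^{3/2}/x^{3/2})$-type bookkeeping — more precisely $2^n \cdot 1 \lesssim 2^{3n} \lesssim x^3$ on $I_n$ is too crude, so instead note $\|(1-z)(W_n\ast\varphi)\|_{L^2}^2 \lesssim \sum_{k\in I_n}|f(k)-f(k-1)|^2 + \sum_{k\in I_n} 2^{-2n}|f(k-1)|^2$, and $2^n$ times the first sum is $\lesssim 2^n \|f'\|_{L^2(I_n)}^2 \lesssim \|x^{3/2}f'\|^2_{L^2(I_n)} \cdot 2^{-2n} \cdot 2^n$, i.e. one needs $2^n \leq C 2^{-2n} x^3$, which holds since $x \asymp 2^n$ on $I_n$. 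Then $2^{n/2}\|(1-z)(W_n\ast\varphi)\|_{L^2} \lesssim \|x^{3/2}f'\|_{L^2(I_n)} + \|x^{1/2}f\|_{\ell^2(I_n)}$ (the second derivative-free term being absorbed, using $\|\widehat{W_n}(k)-\widehat{W_n}(k-1)\| \leq 2^{-n+1}$).

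Combining the two estimates via the geometric-mean bound of Lemma \ref{lemma=elementary}:
\[ 2^n \|W_n \ast \varphi\|_{L^1(\T)} = (2^{n/2}\cdot 2^{n/2})\|W_n\ast\varphi\|_{L^1} \leq \frac{2}{\sqrt\pi}\sqrt{2^{n/2}\|W_n\ast\varphi\|_{L^2}\cdot 2^{3n/2}\|(1-z)(W_n\ast\varphi)\|_{L^2}} \]
— wait, the bookkeeping of powers of $2^n$ must be done carefully: Lemma \ref{lemma=elementary} gives a product of $L^2$ norms under a square root, and I want to distribute $2^n = 2^{n/2}\cdot 2^{n/2}$ so that each factor gets one $2^{n/2}$ after the square root is undone, i.e. $2^n\|\psi\|_{L^1} \leq \frac{2}{\sqrt\pi}\sqrt{(2^n\|\psi\|_{L^2})(2^n\|(1-z)\psi\|_{L^2})}$. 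Here $2^n\|\psi\|_{L^2}$ is controlled by $\|x^{1/2}f\|_{\ell^2(I_n)}\cdot 2^{n/2}$, which is \emph{not} bounded — so the correct split is $2^n \|\psi\|_{L^1} \leq \frac{2}{\sqrt\pi}\sqrt{(2^{n/2}\|\psi\|_{L^2})(2^{3n/2}\|(1-z)\psi\|_{L^2})}$, where now $2^{n/2}\|\psi\|_{L^2}\lesssim \|x^{1/2}f\|_{\ell^2(I_n)}$ and $2^{3n/2}\|(1-z)\psi\|_{L^2}\lesssim \|x^{3/2}f'\|_{L^2(I_n)} + \|x^{1/2}f\|_{\ell^2(I_n)}$, using $2^{3n/2}\lesssim x^{3/2}$ on $I_n$ for the derivative term and $2^{3n/2}\cdot 2^{-n} = 2^{n/2}\lesssim x^{1/2}$ for the lower-order term. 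Taking the square root of the product and using $\sqrt{a(b+c)}\leq \sqrt{ab}+\sqrt{ac}$ then yields exactly the claimed bound, with the universal constant $\frac{4}{\sqrt\pi}$ after absorbing a factor $2$.

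The main obstacle I anticipate is the precise accounting of the weights $x^{1/2}$, $x^{3/2}$ against the dyadic scale $2^n$ across $I_n = (2^{n-1},2^{n+1}]$, together with the harmless off-by-one shifts (the coefficient at $k$ of $(1-z)\psi$ involves $f(k-1)$, and the support of $\widehat{W_n}$ spilling to $k-1$), which need the interval $I_n$ to be dyadic-with-overlap rather than exactly $[2^{n-1},2^n]$; none of this is deep but it must be written so that the constant is genuinely universal and independent of $n$. The role of $\alpha$ does not appear in this lemma — it enters only when summing over $n$ in the proof of Proposition \ref{prop=Hankel_with_C^1_symbol}, via a Cauchy--Schwarz in $n$ with weights $2^{\pm 2\alpha n}$.
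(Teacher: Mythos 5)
Your proof is correct and follows essentially the same route as the paper: apply Lemma \ref{lemma=elementary} to $\psi = W_n\ast\varphi$, bound $\|\psi\|_{L^2}$ by Parseval, split the Fourier coefficients of $(1-z)\psi$ into a difference-of-$\widehat{W_n}$ part (bounded by $2^{1-n}$) and a difference-of-$f$ part (bounded by $\|f'\|_{L^2(k-1,k)}$ via Cauchy--Schwarz), and then convert powers of $2^{n}$ into the weights $x^{1/2}$, $x^{3/2}$ using $x\geq 2^{n-1}$ on $I_n$. The only (acknowledged and harmless) divergence is your choice to pair $f(k-1)$ with $\widehat{W_n}(k)-\widehat{W_n}(k-1)$, which lets the single point $2^{n-1}\notin I_n$ leak into the lower-order term; the paper avoids this by pairing $f(k)$ with the $\widehat{W_n}$-difference and $f(k)-f(k-1)$ with $\widehat{W_n}(k-1)$, keeping everything inside $I_n$.
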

\begin{proof}
The inequality $\| W_n \ast \varphi\|_{L^2(\T)} \leq \|f\|_{\ell^2(I_n)}$ is clear. Writing $(1-z)(W_n \ast \varphi)(z)$ as
\[
\sum_{2^{n-1}+1}^{2^{n+1}} (\widehat W_n(k)-\widehat W_n(k-1)) f(k) z^k + \widehat W_n(k-1)( f(k)-f(k-1)) z^k,\]
and noting that $|\widehat W_n(k)-\widehat W_n(k-1)| \leq 2^{1-n}$ and $\widehat W_n(k-1) |f(k)-f(k-1)| \leq \|f'\|_{L^2(k-1,k)}$ for $k\in I_n$, we get
\[\|(1-z) (W_n \ast \varphi)\|_{L^2(\T)} \leq 2^{1-n} \|f\|_{\ell^2(I_n)} + \|f'\|_{L^2(I_n)}.\]
By Lemma \ref{lemma=elementary} and the inequality $\sqrt{a + b} \leq \sqrt a + \sqrt b$ we get
\[ \|W_n \ast \varphi\|_{L^1(\T)} \leq \frac{2^{\frac{3-n}{2}}}{\sqrt \pi} \|f\|_{\ell^2(I_n)} + \frac{2}{\sqrt \pi} \sqrt{\|f\|_{\ell^2(I_n)} \|f'\|_{L^2(I_n)}}.\]
Multiplying by $2^n$ and using that $x \geq 2^{n-1}$ on $I_n$ we get 

\[ 2^n \|W_n \ast \varphi\|_{L^1(\T)} \leq \frac{4}{\sqrt \pi} \left(\| x^{\frac 1 2} f\|_{\ell^2(I_n)} + \sqrt{ \|x^{\frac 1 2} f\|_{\ell^2(I_n)} \|x^{\frac 3 2} f'\|_{L^2(I_n)}}\right),\]
which concludes the proof.
\end{proof}
 
\begin{proof}[Proof of Proposition \ref{prop=Hankel_with_C^1_symbol}]
 By Lemma \ref{lemma=domination_L1norm} and Peller's characterization, there is a universal constant $C$ such that we have 
\[ \|A\|_1 \leq C \left( |f(0)|+|f(1)|+\sum_{n \geq 1} \|x^{\frac 1 2} f\|_{\ell^2(I_n)} + \sqrt{ \|x^{\frac 1 2} f\|_{\ell^2(I_n)} \|x^{\frac 3 2} f'\|_{L^2(I_n)}}\right).\]
Denote here $I_0=\{1\}$, so that $|f(1)| = \|f\|_{\ell^2(I_0)}$. Then by Cauchy-Schwarz inequality the previous inequality becomes
\begin{multline}\label{eq=after_Peller} \|A\|_1 \leq C \left(|f(0)|+\sum_{n \geq 0} \|x^{\frac 1 2} f\|_{\ell^2(I_n)} \right. \\ \left.+ \sqrt{ \sum_{n \geq 1} \|x^{\frac 1 2} f\|_{\ell^2(I_n)}}\sqrt{\sum_{n \geq 1} \|x^{\frac 3 2} f'\|_{L^2(I_n)}}\right).\end{multline}
Let $N \geq 1$. If $n \leq N$, $x^{1/2}$ is dominated by $x^{\frac 1 2 -\alpha} 2^{\alpha(n+1)}$ on $I_n$, and hence
\begin{eqnarray*} \sum_{n = 0}^N \|x^{\frac 1 2} f\|_{\ell^2(I_n)} &\leq& \left(\sum_{n = 0}^N 2^{2\alpha(n+1)}\right)^{\frac 1 2} \left( \sum_{n=0}^N \| x^{\frac 1 2 - \alpha}f\|_{\ell^2(I_n)}^2\right)^{\frac 1 2}\\
& \leq & \frac{2^{\alpha(N+2)}}{\sqrt{2^{2\alpha} - 1}} \sqrt 2 \| x^{\frac 1 2 - \alpha} f\|_{\ell^2([1,2^{N+1}])} \\
& \leq& \frac{C}{\sqrt{\alpha}} 2^{\alpha N} \| x^{\frac 1 2 - \alpha} f\|_{\ell^2([1,\infty))}.
\end{eqnarray*}
The $\sqrt 2$ is because every point in $[1,\infty)$ belongs to at most $2$ intervals $I_n$ for $n \in [1,N]$. For $n > N$ use that $x \geq 2^{n-1}$ on $I_n$ to dominate
\begin{eqnarray*} \sum_{n > N} \|x^{\frac 1 2} f\|_{\ell^2(I_n)} &\leq& \sum_{n > N} 2^{-\alpha(n-1)} \| x^{\frac 1 2 + \alpha} f\|_{\ell^2(I_n)} \\
& \leq &\left(\sum_{n > N} 2^{-2\alpha(n-1)} \right)^{\frac 1 2} \sqrt{2} \|x^{\frac 1 2 + \alpha} f\|_{\ell^2([1,\infty))}\\
& \leq & \frac{C}{\sqrt{\alpha}} 2^{-\alpha N} \|x^{\frac 1 2+\alpha}f\|_{\ell^2([1,\infty))}.
\end{eqnarray*}
Let $a = \| x^{\frac 1 2 - \alpha}f\|_{\ell^2([1,\infty))}$ and $b = \|x^{\frac 1 2+\alpha}f\|_{\ell^2([1,\infty))}$. Since $a \leq b$ we have $\inf_{N \geq 1} 2^{\alpha N} a + 2^{-\alpha N} b \leq 2^{1+\alpha} \sqrt{ab}$. This implies that there is a constant $C$ such that 
\[ \sum_{n \geq 0} \|x^{\frac 1 2} f\|_{\ell^2(I_n)} \leq \frac{C}{\sqrt \alpha} \sqrt{\| x^{\frac 1 2 - \alpha}f\|_{\ell^2([1,\infty))} \|x^{\frac 1 2 + \alpha}f\|_{\ell^2([1,\infty))}}.\]
The same argument implies a similar inequality with $f$ replaced by $x f'$ and the norm $\ell^2$ replaced by the norm $L^2$~:
\[ \sum_{n \geq 1} \|x^{\frac 3 2} f'\|_{L^2(I_n)} \leq \frac{C}{\sqrt \alpha} \sqrt{\| x^{\frac 3 2 - \alpha}f'\|_{L^2(1,\infty))} \|x^{\frac 3 2 + \alpha}f\|_{L^2(1,\infty)}}.\]
If we remember \eqref{eq=after_Peller} we get the inequality in the Proposition, which concludes the proof.\end{proof}

\section{The case of $\Z^d$}\label{section=Zd}
In this section we prove that $\Z^d$ equipped with its standard generating set satisfies the conclusion of Corollary \ref{hyperbolicgroup} for all $d\geq 1$. Actually we prove a stronger result~: the ``if-part'' of Theorem \ref{HSS} holds for $\Z^d$ (see Remark \ref{rem=difference-1-2}). For the standard generating set, the word-length of $n =(n_1,\dots,n_d) \in \Z^d$ is the $\ell^1$-length $|n|=|n_1|+\dots+|n_d|$. 
\begin{thm}\label{Zd} Let $d \geq 1$. There exists $C_d \in \R_+$ such that for every function $\dot \phi \colon \N \to \C$ satisfying that the matrix $H = \left(\dot \phi(j+k) - \dot \phi(j+k+2)\right)$ is trace class, the map
\[ \sum_{n \in \Z^d} c_n e^{i n \cdot x} \mapsto \sum_{n \in \Z^d} c_n \dot \phi(|n|) e^{in \cdot x}\]
is bounded on $L^\infty(\T^d)$ with norm less than $ \lim_{n\rightarrow\infty}(| \dot{\phi}(2n)|+|\dot{\phi}(2n+1)|)+C_d\|H\|_{S^1}$.
\end{thm}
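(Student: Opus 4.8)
The plan is to mimic the proof of the ``if'' part of Theorem \ref{HSS} given in the excerpt for trees, but to replace the single geodesic path $p_x$ (equivalently, the vectors $\delta_{p_x(i)}$) by a family of maps into a Hilbert space analogous to the Ozawa vectors $\eta_i^\pm$, constructed directly from the structure of $\Z^d$ with the $\ell^1$-metric. Concretely, I want to produce, for each $x \in \Z^d$, vectors $(\xi_i^\pm(x))_{i \ge 0}$ in a Hilbert space $\mathcal H$ such that: (a) $\xi_i^\pm(x) \perp \xi_j^\pm(x)$ whenever $|i-j| \ge 2$ (or at least $i,j$ of different parity, which is what the $j+k+2$ in Theorem \ref{HSS} reflects); (b) $\sup_{i,x}\|\xi_i^\pm(x)\| < \infty$; and (c) $\sum_{i+j = n}\langle \xi_i^+(x), \xi_j^-(y)\rangle = 1$ if $d(x,y) \le n$ and has the right parity, and $0$ otherwise. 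Given such vectors, the argument is verbatim the one in the excerpt: write $H = A^*B$ with $\|A\|_{S_2}\|B\|_{S_2} = \|H\|_{S^1}$, set $P(x) = \sum_i B(e_i)\otimes \xi_i^+(x)$, $Q(y) = \sum_j A(e_j)\otimes \xi_j^-(y)$, bound $\|P(x)\|^2, \|Q(y)\|^2$ by a constant times $\|B\|_{S_2}^2, \|A\|_{S_2}^2$ using (a) and (b), compute $\langle P(x), Q(y)\rangle = \sum_n (\dot\phi(n) - \dot\phi(n+2))\sum_{i+j=n}\langle \xi_i^+(x),\xi_j^-(y)\rangle = \dot\phi(d(x,y)) - \lim_n \dot\phi(2n + d(x,y))$ via (c), peel off the two limits using that $1 \pm (-1)^{d(x,y)}$ is a Schur multiplier of norm $\le 2$ on $\T^d$ (since $d(x,y)$ has the same parity as $|x|+|y|$), and invoke Proposition \ref{Schur}; the passage from Schur multiplier on $B(\ell^2(\Z^d))$ to Fourier multiplier on $L^\infty(\T^d)$ is the standard transference of \cite{BF84}.

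The heart of the matter is therefore step (c): constructing the $\xi_i^\pm$ for $\Z^d$. Since the $\ell^1$-ball in $\Z^d$ is a ``diamond'' and the metric decomposes coordinatewise, the natural strategy is to build the one-dimensional case first and then tensorize. For $d = 1$, with $\Gamma = \Z$ and $d(x,y) = |x-y|$, one can take a very explicit construction: something like $\xi_i^+(x) \in \ell^2(\Z)$ supported near $x + i$ or $x - i$, designed so that the pairing counts geodesics — for instance using $\xi_i^\pm(x) = \frac{1}{\sqrt 2}(\delta_{x+i}^\pm)$ built from the two ``directions'' along $\Z$, so that $\sum_{i+j=n}\langle \xi_i^+(x),\xi_j^-(y)\rangle$ counts the (at most one, after accounting for parity) way to interpolate. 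For general $d$, I expect to take $\mathcal H = \mathcal H_1^{\otimes d}$ and $\xi_n^\pm(x_1,\dots,x_d) = \sum_{n_1+\dots+n_d = n} \xi_{n_1}^\pm(x_1)\otimes\dots\otimes\xi_{n_d}^\pm(x_d)$, using that a geodesic in the $\ell^1$-metric is exactly a concatenation of coordinatewise geodesics and that $d(x,y) = \sum_\ell |x_\ell - y_\ell|$; the multiplicativity of the indicator $1_{d(x,y)\le n}$ under this convolution is what makes (c) work. The orthogonality (a) and the norm bound (b) will then need to be checked for the tensor construction — this is where the growth of $\mathcal H$ and the combinatorics of the sum over $n_1+\dots+n_d=n$ enter, and where the constant $C_d$ depending on $d$ comes from.

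I expect the main obstacle to be precisely making the one-dimensional building blocks satisfy (a), (b), and (c) \emph{simultaneously} with uniform (in $i$ and $x$) norm bounds, and then controlling the norm blow-up under $d$-fold tensoring. A naive choice making (c) exact will typically violate (a) or have norms growing in $i$; conversely enforcing the orthogonality pattern $|i-j|\ge 2$ forces one into an Ozawa-type alternating/telescoping construction rather than a literal geodesic one. The excerpt hints that ``some delicate estimates of $L^1(\R^d/\Z^d)$-norms'' are used, which suggests that rather than a purely combinatorial Hilbert-space construction, the actual proof may instead estimate directly the $L^1(\T^d)$-norm of the Fourier transform of the radial kernel — i.e. bound $\|\sum_{n\in\Z^d}\dot\phi(|n|)e^{in\cdot x}\|$ in terms of $\|H\|_{S^1}$ by reducing, through the Besov-space / de la Vallée Poussin machinery of Section \ref{section=trace_class_estimates}, to estimating $L^1(\T^d)$-norms of the inverse transforms of smooth radial symbols on $\Z^d$, where the $\ell^1$-sphere's geometry produces the $d$-dependence. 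If the tensor-of-Ozawa-vectors route stalls, I would fall back on this transference-plus-Besov route, decomposing $\dot\phi$ dyadically via $W_n$ and estimating each dyadic piece's radial Fourier multiplier on $L^\infty(\T^d)$ by an $L^1$ bound on an explicit kernel.
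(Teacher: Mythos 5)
Your primary route (Ozawa-type vectors for $\Z^d$, built by tensorizing a one-dimensional construction) breaks down at exactly the point you flag as the heart of the matter, and not merely for technical reasons. The one-dimensional blocks coming from the line (a tree) satisfy the ball-with-parity identity $\sum_{i+j=m}\langle \xi_i^+(x),\xi_j^-(y)\rangle=\chi_{\{|x-y|\le m,\ m\equiv|x-y|\,(2)\}}$, and if you tensorize as proposed, $\Xi^\pm_n(x)=\sum_{n_1+\dots+n_d=n}\xi^\pm_{n_1}(x_1)\otimes\cdots\otimes\xi^\pm_{n_d}(x_d)$, then $\sum_{i+j=n}\langle \Xi_i^+(x),\Xi_j^-(y)\rangle$ is not an indicator but the number of admissible decompositions, namely $\binom{(n-|x-y|)/2+d-1}{d-1}$ for $n\ge |x-y|$ of the right parity. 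Pairing with $\dot\phi(n)-\dot\phi(n+2)$ then no longer telescopes to $\dot\phi(|x-y|)-\lim$; to recover $\dot\phi$ you would need $d$-fold iterated differences, i.e.\ a different trace-class hypothesis than the one in the statement. The "sphere" fix you suggest ($\xi^\pm_i(x)=\tfrac1{\sqrt2}\delta_{x\pm i}$) produces an extra factor of order $n$ in the pairing, and renormalizing to remove it destroys the uniform bound (b); similarly, the tensorized vectors themselves have norms growing like a power of $n$ (the number of compositions of $n$ into $d$ parts), so (b) fails as well. This is consistent with the paper's own remark after the theorem: $\Z^d$ is not hyperbolic, and the authors state they were unable to find such a direct factorization argument.

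Your fallback is the right general direction — it is essentially what the paper does — but as written it only restates the inequality that constitutes the entire difficulty. The actual proof reduces to finitely supported $\dot\phi$, identifies the multiplier norm on $L^\infty(\T^d)$ with the $L^1(\T^d)$ norm of the radial kernel, uses Peller's theorem to turn the hypothesis into a $B^1_1$ bound, and then proves the key estimate (Proposition \ref{prop=inequZd}) that $\|\sum_{n\in\Z^d}a_{|n|}e^{in\cdot x}\|_{L^1(\T^d)}\le C_d\,\|\sum_m (m+1)(a_m-a_{m+2})e^{im\theta}\|_{L^1(\T)}$. That estimate requires genuinely new ingredients which your sketch does not contain: Abel summation to the $\ell^1$-ball Dirichlet kernels $D_m$, the explicit divided-difference formula $D_m(x)=[\cos x_1,\dots,\cos x_d]G_m$ (for $d$ even), the reduction to a uniform bound $\sup_t\|[\cos x_1,\dots,\cos x_d]H_{\chi_{[t,\pi]}}\|_{L^1([-\pi,\pi]^d)}<\infty$, and the induction-on-$d$ $L^1$ estimate for divided differences of characteristic functions (Lemma \ref{lemma=free_diff_lemma}). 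A dyadic decomposition through the de la Vall\'ee Poussin kernels alone will not produce the $d$-dimensional $L^1$ bound; the crux is the geometry of the $\ell^1$-sphere encoded in those divided-difference estimates, and without them the proposal has a genuine gap.
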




\begin{rem}
This is indeed an analogous of Corollary \ref{hyperbolicgroup} because, as $\Z^d$ is commutative, the von Neumann algebra of $\Z^d$ is $L^\infty(\R^d/\Z^d)$, and the norm and cb norm of a Fourier multiplier on $L^\infty(\R^d/\Z^d)$ coincide. 
\end{rem}
\begin{rem}
 Theorem \ref{Zd} together with  Theorem \ref{HSS} tell us  that the Banach space of c.b. radial multipliers on $\mathbb{F}_d$  embeds naturally into the Banach space of c.b. radial multipliers on $\Z^d$. It is tempting to expect a direct proof of this. We were only able to find a proof that relies on Theorem \ref{HSS} and on estimates for the norm in $L^1(\R^d/\Z^d)$ of functions with radial Fourier transform.
\end{rem}

For the proof of Theorem \ref{Zd}, we can restrict ourselves to the case when $\dot \phi$ has finite support. For a function $\phi \colon \Z^d \to \C$ with finite support, the Fourier multiplier $m_\phi$ with symbol $\phi$ is the convolution by the function $x \in \R^d/\Z^d \mapsto \sum_{n \in \Z^d} \phi(n) e^{2i\pi n\cdot x}$ on $L^\infty(\R^d/\Z^d)$. The norm and cb norm of $m_\phi$ both coincide with the $L^1$-norm of the function $\sum_{n \in \Z^d} \phi(n) e^{2i\pi n\cdot x}$. Taking into account Peller's Theorem \cite{P80} (see \S \ref{subsection=proof}), we see that Theorem \ref{Zd} is equivalent to the existence of $C_d$ such that 
\begin{equation}\label{eq=validity_Zd} \| \sum_{n \in \Z^d} a_{|n|}  e^{ i n \cdot x}\|_{L^1([-\pi,\pi]^d)} \leq C_d \| \sum_{m \geq 0} (a_{m} - a_{m+2}) e^{i m\theta}\|_{B_1^1}\end{equation}
for all finitely supported sequence $(a_m)_{m \geq 0}$. But it is easy to see from \eqref{eq:def_B11} that for $\varphi(e^{i\theta}) = \sum_{m \geq 0} b_m e^{i m\theta}$,
\[ C^{-1} \|\varphi\|_{B_1^1} \leq \sum_{n \geq 0} \|W_n \ast \psi\|_{L^1(\T)} \leq C \|\varphi\|_{B_1^1}\]
for some universal constant $C$, where $\psi(e^{i\theta}) = \sum_{m \geq 0} (m+1) b_m e^{i m\theta}$. 

The inequality \eqref{eq=validity_Zd} therefore follows from 
\begin{prop}\label{prop=inequZd} Let $d$ be an integer. There is a constant $C_d$ such that for every finitely supported sequence $(a_n)_{n\geq 0}$, 
\begin{equation}\label{eq=L1_Dirichlet} \| \sum_{n \in \Z^d} a_{|n|}  e^{ i n \cdot x}\|_{L^1([-\pi,\pi]^d)} \leq C_d \| \sum_{m \geq 0} (m+1) (a_{m} - a_{m+2}) e^{i m\theta}\|_{L^1([-\pi,\pi])}.\end{equation}
\end{prop}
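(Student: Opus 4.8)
\textbf{Proof plan for Proposition \ref{prop=inequZd}.}

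The plan is to reduce the $d$-dimensional estimate to the one-dimensional estimate by an inductive argument on $d$, integrating out one coordinate at a time. The key observation is that for a function whose Fourier coefficients depend only on the $\ell^1$-length $|n|$, summing over the last coordinate $n_d$ produces, for fixed $(n_1,\dots,n_{d-1})$, a new one-dimensional radial-type sequence in the remaining variables, whose relevant ``second difference'' symbol can be controlled by the symbol $\sum_m (m+1)(a_m-a_{m+2})e^{im\theta}$ appearing on the right-hand side. Concretely, write $F(x) = \sum_{n \in \Z^d} a_{|n|} e^{in\cdot x}$ and, for fixed $x_d \in [-\pi,\pi]$, expand in the $x_d$ variable: $F(x) = \sum_{n_d \in \Z} e^{in_d x_d} G_{n_d}(x_1,\dots,x_{d-1})$, where $G_m(y) = \sum_{p \in \Z^{d-1}} a_{|p|+|m|} e^{ip\cdot y}$ is a $(d-1)$-dimensional radial object with shifted length function. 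The strategy is then to integrate $|F|$ first in $(x_1,\dots,x_{d-1})$ using the inductive hypothesis and afterwards in $x_d$.

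The main technical device I expect to need is the following one-variable lemma: if $h(\theta) = \sum_{m \geq 0} (m+1)(a_m - a_{m+2}) e^{im\theta}$ has finite $L^1$-norm, then $h$ controls (in $L^1$) not just $a_{|n|}$ but all the ``partially summed'' kernels of the form $\theta \mapsto \sum_{n} a_{|n| + c} e^{in\theta}$ for $c \geq 0$, and even weighted versions thereof, with a constant independent of $c$. This is where the $(m+1)$ weight and the \emph{second} difference $a_m - a_{m+2}$ (rather than the first difference) pay off: the second difference structure is exactly what is invariant under the parity-splitting that $\Z^d$ forces, and the extra polynomial weight is what absorbs the $(d-1)$-fold iterated summation. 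I would establish this lemma directly via the de la Vall\'ee Poussin/Besov characterization already recalled in \S\ref{subsection=proof}, using Lemma \ref{lemma=elementary} to pass from $L^2$ bounds on dyadic blocks to $L^1$ bounds, much as in the proof of Lemma \ref{lemma=domination_L1norm}, but keeping track of the dependence on the shift and weight.

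The base case $d=1$ of \eqref{eq=L1_Dirichlet} is essentially the statement that $\|\sum_{n\in\Z} a_{|n|}e^{in\theta}\|_{L^1} \lesssim \|h\|_{L^1}$; since $\sum_{n \in \Z} a_{|n|} e^{in\theta} = a_0 + \sum_{n\geq 1} a_n (e^{in\theta}+e^{-in\theta})$, this follows by writing $a_n$ as a telescoping sum of the $a_m - a_{m+2}$ along each parity class and summing by parts, reducing to the boundedness on $L^1$ of the relevant (second-order) integration operator against $h$. The inductive step combines the expansion above with the one-variable lemma applied in the $x_d$ variable. \textbf{The hard part} will be bookkeeping the constants through the induction so that each integration in one coordinate costs only a fixed factor (yielding $C_d$ of the form $C^d$), and in particular verifying that the iterated sums $G_m$ really do stay within the class to which the one-variable lemma applies --- i.e., that after integrating out a coordinate one still has a symbol of the form (polynomial weight) $\times$ (second difference of a shifted sequence), possibly up to error terms that are themselves controlled by $\|h\|_{L^1}$. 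I would expect to need an auxiliary estimate handling the low-frequency/parity correction terms separately, analogous to the role played by the terms $\frac{1\pm(-1)^{d(x,y)}}{2}\lim \dot\phi$ in the tree argument.
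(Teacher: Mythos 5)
Your plan of slicing off the last coordinate and inducting on $d$ does not close, and the gap is precisely at the point you defer to a ``one-variable lemma''. For fixed $x_d$, the function $(x_1,\dots,x_{d-1})\mapsto F(x)$ is radial with profile $b_k(x_d)=\sum_{m\in\Z}a_{k+|m|}e^{imx_d}$, so the inductive hypothesis gives
\begin{equation*}
\|F(\cdot,x_d)\|_{L^1(\T^{d-1})}\ \leq\ C_{d-1}\Bigl\|\sum_{k\geq 0}(k+1)\bigl(b_k(x_d)-b_{k+2}(x_d)\bigr)e^{ik\theta}\Bigr\|_{L^1(d\theta)} ,
\end{equation*}
and after integrating in $x_d$ what you must bound by $\|h\|_{L^1(\T)}$ is the \emph{two-variable} quantity
\begin{equation*}
\Bigl\|\sum_{k\geq 0,\,m\in\Z}(k+1)\bigl(a_{k+|m|}-a_{k+|m|+2}\bigr)e^{ik\theta}e^{imx_d}\Bigr\|_{L^1(\T^2)} .
\end{equation*}
This is a weighted radial-type $L^1$ estimate on $\Z^2$ of exactly the same nature (and difficulty) as the case $d=2$ of the proposition itself, so the induction is essentially circular. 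A family of uniform one-variable bounds for the shifted kernels $\sum_n a_{|n|+c}e^{in\theta}$ cannot substitute for it: the only way to use them is to pull the $m$-sum out by the triangle inequality, each slice then costs about $\|h\|_{L^1}$, and the number of slices grows with the support of $(a_n)$, so all cancellation in the $x_d$ variable is lost and no uniform constant results. Nothing in your outline supplies a mechanism to exploit that oscillation; ``bookkeeping constants'' is not where the difficulty lies. (Your base case $d=1$ is also not quite routine, since the parity-telescoped Dirichlet-type kernels have $L^1$-norm of order $\log m$, so the summation by parts must genuinely use the weighted $L^1$ norm of $h$ rather than absolute summability; but this part is repairable.)

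For comparison, the paper avoids coordinate slicing altogether: it writes $\sum_n a_{|n|}e^{in\cdot x}=\sum_m (a_m-a_{m+1})D_m(x)$ and uses Xu's closed formula expressing the $\ell^1$-ball Dirichlet kernel $D_m$ as a divided difference $[\cos x_1,\dots,\cos x_d]G_m$ with $G_m(\cos\theta)=(-1)^{d/2-1}(\sin\theta)^{d-2}(\cos m\theta+\cos(m+1)\theta)$. The problem then reduces to the uniform bound $\sup_t\|[\cos x_1,\dots,\cos x_d]H_{\chi_{[t,\pi]}}\|_{L^1}<\infty$, proved by an induction carried out on the divided-difference structure (Lemma \ref{lemma=free_diff_lemma}); there the cancellation you are missing is captured by the vanishing of $d$-point divided differences on polynomials of degree $d-2$ and by pointwise estimates on $h_\beta(\theta)=(\cos\theta-\cos\beta)^{-1}$, and the $(m+1)(a_m-a_{m+2})$ weight on the right-hand side of \eqref{eq=L1_Dirichlet} emerges from differentiating the profile $f_1$, not from iterated summation. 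If you want to salvage a slicing argument you would need, at the inductive step, an $L^1(\T^2)$ estimate of the displayed type proved by a genuinely two-dimensional method, which is the heart of the matter rather than a technical footnote.
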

\begin{proof}
We can assume that $d$ is even, because \eqref{eq=L1_Dirichlet} for $d$ implies \eqref{eq=L1_Dirichlet} for $d-1$ by taking the average with respect to $x_d$.

We can rewrite
\[ \sum_{n \in \Z^d} a_{|n|} e^{i n \cdot x} = \sum_{m\geq 0} (a_{m} - a_{m+1}) D_m(x)\]
where $D_m(x) = \sum_{|n|\leq m} e^{i n \cdot x}$. The exact value for $D_m(x)$ was computed in \cite[Theorem 4.2.3]{X95} and is equal to 
\[ D_m(x_1,\dots,x_d) = [\cos x_1,\dots,\cos x_d]G_m\]
where $G_m\colon [-1,1] \to \R$ is given by
\[ G_m(\cos \theta) = (-1)^{\frac d 2 - 1} (\sin \theta)^{d-2} (\cos(m\theta) + \cos((m+1)\theta))\]
and for a function $f \colon [-1,1] \to \C$ and $d$ distinct numbers $t_1,\dots,t_d \in [-1,1]$ we use the following notation of divided difference
\[ [t_1,\dots,t_d]f = \sum_{j=1}^d \frac{f(t_j)}{\prod_{k \neq j} (t_j - t_k)}.\]
Here we use that $d$ is even, otherwise in the formula for $G_m(\cos \theta)$ the terms $\cos$ have to be replaced by $\sin$.

For a function $f \colon [0,\pi] \to \C$ we define $H_f \colon [-1,1] \to \C$ by 
\[H_f(\cos \theta) = (\sin\theta)^{d-2} f(\theta) \textrm{ for }\theta \in [0,\pi].\]
Then we have the identity 
\[ \sum_{m \geq 0} (a_m - a_{m+1}) G_m(\cos \theta) = \frac{(-1)^{\frac d 2 - 1}}{2} H_{f_1+f_2}(\cos \theta),\]
 for all $\theta\in [-\pi,\pi]$ with $f_2(\theta)=f_1(-\theta)$  and  
\[f_1(\theta) = \sum_{m \geq 0} (a_m - a_{m+1})(e^{im\theta} + e^{i(m+1)\theta}) = (a_0-a_1) + \sum_{m\geq 0}(a_m - a_{m+2}) e^{i(m+1)\theta}.\] By the preceding we can therefore write
\[ \sum_{n \in \Z^d} a_{|n|} e^{i n \cdot x} =\frac{(-1)^{\frac d 2 - 1}}{2} [\cos x_1,\dots,\cos x_d]H_{f_1+f_2}.\]
Using that $H_1(t) = (1-t^2)^{\frac d 2 - 1}$ is a polynomial of degree $d-2$ ($d$ is even) and that $[t_1,\dots,t_d]f=0$ whenever $f$ is a polynomial of degree $d-2$ we observe for further use that
\begin{equation}\label{eq=divided_difference_vanish} [\cos x_1,\dots,\cos x_d]H_1 = 0.\end{equation}

We claim that
\begin{equation}\label{eq=divided_difference_ineq} \sup_{0\leq t\leq\pi} \| [\cos x_1,\dots,\cos x_d]H_{\chi_{[t,\pi]}} \|_{L^1([-\pi,\pi]^d)} <\infty \end{equation}
This will imply the proposition. Indeed, if $K$ is the $\sup$ in the previous inequality, and $f \colon [0,\pi] \to \C$ is any $C^1$ function, writing $f(\theta) = f(0) + \int_{0}^{\pi} f'(t) \chi_{[t,\pi]}(\theta) dt$ for all $\theta \in [0,\pi]$ and using \eqref{eq=divided_difference_vanish}, we get
\begin{multline*} \| [\cos x_1,\dots,\cos x_d]H_{f} \|_{L^1([-\pi,\pi]^d)} \leq \\
\int_{0}^\pi |f'(t)| \| [\cos x_1,\dots,\cos x_d]H_{\chi_{[t,\pi]}} \|_{L^1([-\pi,\pi]^d)} dt \leq K \|f'\|_{L^1([0,\pi])}.\end{multline*}
Applying this inequality to $f=f_1+f_2$ and noticing that 
\begin{eqnarray}\label{f1'}
f_1'&=&i\sum_{m\geq 0} (m+1)(a_{m} - a_{m+2}) e^{i(m+1)\theta}\end{eqnarray}
we get  \eqref{eq=L1_Dirichlet}.
We now move to \eqref{eq=divided_difference_ineq}.  Note $H_{\chi_{[t,\pi]}}+H_{\chi_{[0,t]}}=H_1=0$, we have
\[ \| [\cos x_1,\dots,\cos x_d]H_{\chi_{[t,\pi]}} \|_{L^1([-\pi,\pi]^d)} = \| [\cos x_1,\dots,\cos x_d]H_{\chi_{[0,t]}} \|_{L^1([-\pi,\pi]^d)}.\]
Also by symmetry we have 
\[ \| [\cos x_1,\dots,\cos x_d]H_{\chi_{[t,\pi]}} \|_{L^1([-\pi,\pi]^d)} = 2^d \| [\cos x_1,\dots,\cos x_d]H_{\chi_{[t,\pi]}} \|_{L^1([0,\pi]^d)}.\]
Finally, by the change of variables $x_i \mapsto \pi - x_i$,
\[ \| [\cos x_1,\dots,\cos x_d]H_{\chi_{[t,\pi]}} \|_{L^1([0,\pi]^d)} = \| [\cos x_1,\dots,\cos x_d]H_{\chi_{[0,\pi-t]}} \|_{L^1([0,\pi]^d)},\]
so we are left to prove
\[ \sup_{0\leq t \leq \pi/2} \| [\cos x_1,\dots,\cos x_d]H_{\chi_{[0,t]}} \|_{L^1([0,\pi]^d)} <\infty.\]
This will follow from the estimate in Lemma \ref{lemma=free_diff_lemma} below. If $d=2$ this is exactly the Lemma for $s=0$. If $d >2$, fix $0<t\leq \frac \pi 2$, and for $\theta \in [0,\pi]$ write
\[ H_{\chi_{[0,t]}}(\cos \theta) = \int_0^t (d-2) (\sin u)^{d-3} \cos u \chi_{[u,t]}(\theta) du.\]
With the notation of Lemma \ref{lemma=free_diff_lemma} we have for all $t \in (0,\pi/2)$
\[ \|[\cos x_1,\dots,\cos x_d]H_{\chi_{[0,t]}} \|_{L^1([0,\pi]^d)}  \leq \int_0^t (d-2) (\sin u)^{d-3} \cos u \|A_{u,t}^d\|_{L^1} du,\]
which, by Lemma \ref{lemma=free_diff_lemma}, is less than
\[ C_d t^{2-d} \int_0^t  (d-2)  (\sin u)^{d-2} \cos u du = C_d \left(\frac{\sin t}{t}\right)^{d-2} \leq C_d.\]
This concludes the proof of \eqref{eq=divided_difference_ineq} and of the proposition.
\end{proof}

The previous proof used the following lemma.
\begin{lemma}\label{lemma=free_diff_lemma} Let $d \geq 1$ be an integer. For every $0\leq s<t\leq \pi$, define a function $A^d_{s,t} \colon [0,\pi]^d \to \R$ by
\begin{eqnarray*} A^d_{s,t}(x_1,\dots,x_d) &=& [\cos x_1,\dots,\cos x_d] (\cos \theta \mapsto \chi_{[s,t]}(\theta) )\\
& = & \sum_{i=1}^d \frac{\chi_{[s,t]}(x_i)}{\prod_{j \neq i} (\cos x_i - \cos x_j)}.
\end{eqnarray*}
Then there is a constant $C_d$ such that for all $0\leq s <t \leq \frac \pi 2$.
\[ \|A_{s,t}^d\|_{L^1([0,\pi]^d)} \leq C_d t^{2-d}.\]
\end{lemma}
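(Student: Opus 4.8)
The plan is to induct on $d$, with the key mechanism being an integral recursion that relates $A^d_{s,t}$ to $A^{d-1}$. First I would record the base cases: for $d=1$, $A^1_{s,t}(x_1) = \chi_{[s,t]}(x_1)$, so $\|A^1_{s,t}\|_{L^1([0,\pi])} = t-s \leq t = t^{2-1}$, which is even independent of the constraint $t \leq \pi/2$; for $d=2$, one computes $A^2_{s,t}(x_1,x_2) = (\chi_{[s,t]}(x_1) - \chi_{[s,t]}(x_2))/(\cos x_1 - \cos x_2)$, and $\|A^2_{s,t}\|_{L^1([0,\pi]^2)}$ has to be bounded by an absolute constant $C_2 = C_2 t^0$. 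The $d=2$ estimate is the genuine analytic core: the integrand is supported on the set where exactly one of $x_1,x_2$ lies in $[s,t]$, and on that set $|\cos x_1 - \cos x_2|$ is bounded below by a quantity comparable to $|x_1-x_2|\cdot(\text{something involving }\sin)$; one splits $[0,\pi]^2$ according to whether the variable outside $[s,t]$ is near $0$, near $\pi$, or in the bulk, and in each region estimates $\int \mathrm{d}x_{\text{out}}/|\cos x_{\text{out}} - \cos x_{\text{in}}|$ by a logarithmic factor that integrates against $\mathrm{d}x_{\text{in}}$ over $[s,t]$ to something uniformly bounded. I expect this $d=2$ case to be the main obstacle — it is where the geometry of $\cos$ near its critical points $0$ and $\pi$ interacts with the singularity of the divided difference.

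For the inductive step, I would use the standard identity for divided differences: fixing $x_d$, one has
\[
[\cos x_1,\dots,\cos x_d]f = \frac{[\cos x_1,\dots,\cos x_{d-1}]f - [\cos x_2,\dots,\cos x_d]f}{\cos x_1 - \cos x_d},
\]
but more useful here is the \emph{integral} form: writing $\chi_{[s,t]} = \int$ of its derivative (a difference of Dirac masses at $s$ and $t$) does not directly help, so instead I would exploit that a divided difference of order $d$ applied to $\chi_{[s,t]}$ can be written as a superposition over $u \in [s,t]$ of order-$(d-1)$ divided differences of $\chi_{\{\theta \geq u\}}$-type objects, together with an application of the mean-value/Hermite–Genocchi representation
\[
[\cos x_1,\dots,\cos x_d]\,g = \int_{\Sigma_{d-1}} g^{(d-1)}\!\left(\textstyle\sum_i \lambda_i \cos x_i\right) \mathrm{d}\lambda
\]
over the simplex $\Sigma_{d-1}$. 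Concretely, peeling off one variable and integrating the $\chi_{[s,t]}$ against an extra factor $(\sin u)^{d-3}\cos u\,\mathrm{d}u$ — exactly the maneuver already used in the proof of Proposition \ref{prop=inequZd} — reduces the $L^1$ norm of $A^d_{s,t}$ to $\int_0^t (\text{smooth weight})\,\|A^{d-1}_{u,t}\|_{L^1}\,\mathrm{d}u$, and then the inductive hypothesis $\|A^{d-1}_{u,t}\|_{L^1} \leq C_{d-1} t^{3-d}$ (uniform in $u$) makes the $u$-integral elementary, yielding the factor $t^{2-d}$ after the weight $(\sin u)^{d-3}\cos u$ integrates to something $\lesssim t^{d-2}$.

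The one subtlety I would be careful about is that the recursion must be set up so that the order-$(d-1)$ object appearing inside the $u$-integral is genuinely of the form $A^{d-1}_{u,t}$ (same $t$, moving lower endpoint $u$), so that the inductive hypothesis applies verbatim; this forces the choice of writing $\chi_{[u,t]}(\theta) = \chi_{[0,t]}(\theta) - \chi_{[0,u]}(\theta)$ and using linearity of the divided difference together with the symmetrization $H_{\chi_{[0,t]}}$ and $H_1 = 0$ tricks already established in the proof above. I would also double-check the scaling bookkeeping: the claim for index $d-1$ gives exponent $t^{2-(d-1)} = t^{3-d}$, and multiplying by $\int_0^t (\sin u)^{d-3}\cos u\,\mathrm{d}u \lesssim \int_0^t u^{d-3}\,\mathrm{d}u \asymp t^{d-2}$ indeed produces $t^{(3-d)+(d-2)} = t$, which is $\leq C_d$ — but to land the sharper $t^{2-d}$ stated, one keeps the $(\sin t/t)^{d-2}\leq 1$ normalization visible from the computation in Proposition \ref{prop=inequZd} rather than crudely bounding; I would present it in the form that makes the $t^{2-d}$ explicit, matching how the lemma is invoked.
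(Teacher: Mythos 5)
Your inductive step does not work as set up. The maneuver you borrow from the proof of Proposition \ref{prop=inequZd} --- writing $H_{\chi_{[0,t]}}(\cos\theta)=\int_0^t (d-2)(\sin u)^{d-3}\cos u\,\chi_{[u,t]}(\theta)\,du$ --- is a reduction of the \emph{weighted} symbol $(\sin\theta)^{d-2}\chi_{[0,t]}(\theta)$ to the unweighted indicators $\chi_{[u,t]}$ at the \emph{same} number of nodes $d$; it does not lower the order of the divided difference, and there is no superposition formula expressing the $d$-point divided difference of $\chi_{[s,t]}\circ\arccos$ as an integral over $u$ of $(d-1)$-point divided differences $A^{d-1}_{u,t}$ against a smooth weight. (Hermite--Genocchi is also unavailable here, since the function whose divided difference is taken is an indicator, not $C^{d-1}$.) Your own bookkeeping flags the problem: the recursion you propose would give $\|A^d_{s,t}\|_{L^1}\lesssim t^{(3-d)+(d-2)}=t$, i.e.\ a bound uniform in $t$, which is \emph{false} for $d\geq 3$. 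Indeed, restricting the integral to $x_1\in[0,t]$ and $x_2,\dots,x_d\in[2t,\pi/2]$, only one term of the sum survives and $|\cos x_1-\cos x_j|\asymp x_j^2$, so $\|A^d_{0,t}\|_{L^1}\gtrsim t\cdot\big(\int_{2t}^{\pi/2}x^{-2}dx\big)^{d-1}\asymp t^{2-d}\to\infty$ as $t\to0$. The exponent $t^{2-d}$ in the lemma is sharp, so any argument producing a uniformly bounded (or $O(t)$) answer must contain an invalid identity; the discrepancy is not a matter of keeping a $(\sin t/t)^{d-2}$ normalization visible.

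Beyond this, an induction hypothesis that is uniform in the lower endpoint ($\|A^{d-1}_{u,t}\|_{L^1}\leq C t^{3-d}$ for all $u$) is too weak for the recursion that actually exists. The paper peels off a node $\beta\notin[s,t]$ (using that $A^{d+1}_{s,t}$ vanishes when all nodes lie in $[s,t]$), writes the resulting $d$-point divided difference of $h_\beta(\theta)\chi_{[s,t]}(\theta)$, with $h_\beta(\theta)=1/(\cos\theta-\cos\beta)$, as a constant term plus $\int_s^t h_\beta'(u)\,\chi_{[s,u]}$ (or $\chi_{[u,t]}$) $du$, and then must integrate $|h_\beta'(u)|\,\|A^d_{s,u}\|_{L^1}$ in $u$ and $\beta$; the bound $\|A^d_{s,u}\|_{L^1}\lesssim u^{2-d}$ alone leaves a nonintegrable singularity as $u\downarrow s$, because $\int_0^s|h_\beta'(u)|\,d\beta\asymp s/(u^2(u-s))$. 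This is why the paper proves by induction the strictly stronger two-parameter estimate $\|A^d_{s,t}\|_{L^1}\leq C_d\,(t-s)\bigl(\tfrac1s|\log(1-\tfrac st)|\bigr)^{d-1}$, whose factor $(t-s)$ cancels the $(u-s)^{-1}$ singularity, and only then deduces the $t^{2-d}$ bound (the case $s=0$ by letting $s\to0$). Your plan is missing both this strengthened hypothesis and a valid mechanism relating dimension $d$ to dimension $d-1$; also, the $d=2$ case needs no separate ``analytic core'' treatment --- it follows from the same induction starting at $d=1$.
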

\begin{proof}
We prove by induction on $d$ that a stronger inequality holds. Namely for all $0<s<t \leq \frac \pi 2$,
\begin{equation}\label{eq=induction_hypothesis}
\|A_{s,t}^d\|_{L^1([0,\pi]^d)} \leq C_d (t-s) \left( \frac 1 s |\log(1-\frac s t)| \right)^{d-1}.
\end{equation}
It is easy to see that $(t-s) \left( \frac 1 s |\log(1-\frac s t)| \right)^{d-1} \leq C'_d t^{2-d}$ for some constant $C'_d$ and all $0<s<t\leq \pi$, so that \eqref{eq=induction_hypothesis} is indeed stronger than the lemma. At least for $s>0$, but the case $s=0$ follows by letting $s \to 0$.

The case $d=1$ is obvious because 
\[ \| A^1_{s,t}\|_{L^1([0,\pi])} = \int_0^\pi \chi_{[s,t]}(\theta) d\theta = t-s.\]

Assume that \eqref{eq=induction_hypothesis} holds for $d \geq 1$, and let $0 < s < t \leq \frac \pi 2$. Throughout the proof we will write $ X \lesssim Y$ when we mean $X \leq C Y$ for some constant allowed to depend on $d$ but not on $s,t$. If $x_1,\dots,x_{d+1} \in [s,t]$ then 
 \[ A^{d+1}_{s,t}(x_1,\dots,x_{d+1}) = [\cos x_1,\dots,\cos x_{d+1}]1=0.\]
By symmetry we therefore have
\[ \|A^{d+1}_{s,t}\|_{L^1([0,\pi]^{d+1})} \leq (d+1) \left(\int_0^s + \int_t^\pi\right) \|A^{d+1}_{s,t}(\cdot,\beta)\|_{L^1([0,\pi]^{d})} d\beta.\]
If $\beta \notin [s,t]$ we can write
\begin{eqnarray*} A_{s,t}^{d+1}(x_1,\dots,x_d,\beta) &=& \sum_{i=1}^d \frac{\chi_{[s,t]}(x_i)}{(\cos x_i - \cos \beta) \prod_{j \neq i} (\cos x_i - \cos x_j)}\\
& = &[\cos x_1,\dots,\cos x_d] \left(\cos \theta \mapsto h_\beta(\theta)\chi_{[s,t]}(\theta)\right),\end{eqnarray*}
where we denote $h_\beta(\theta) = \frac{1}{\cos \theta - \cos \beta}$.

At this point we have to distinguish the cases $\beta<s$ and $\beta>t$. Let us first consider the case $0 \leq \beta<s$. Then for $\theta \in [s,t]$ we write $h_\beta(\theta) = h_\beta(t) - \int_s^t h_\beta'(u) \chi_{[s,u]}(\theta) du$, so that by the triangle inequality we get
\[ |A_{s,t}^{d+1}(x_1,\dots,x_d,\beta)| \leq  |h_\beta(t) A^d_{s,t}(x_1,\dots,x_d)| + \int_s^t |h_\beta'(u) A_{s,u}^d(x_1,\dots,x_d) | du.\]
Integrating with respect to $x_1,\dots,x_d$ we obtain
\[ \|A^{d+1}_{s,t}(\cdot,\beta)\|_{L^1([0,\pi]^{d})} \leq |h_\beta(t)| \|A^{d}_{s,t}\|_{L^1} + \int_s^t |h_\beta'(u)| \|A^{d}_{s,u}\|_{L^1} du.\]
If we use the elementary inequalities $|h_\beta(\theta)| \lesssim \frac{1}{\theta(\theta-\beta)}$ and $|h'_\beta(\theta)| \lesssim \frac{\theta}{\theta^2(\theta-\beta)^2}$ valid for all $0 \leq \beta <\theta \leq \frac \pi 2$, we have $\int_0^s |h_\beta(t)| d\beta \lesssim 
\frac{1}{t}|\log(1-\frac s t)|$ and $\int_0^s |h'_\beta(u)| d\beta \lesssim \frac{s}{u^2(u-s)}$ and the previous inequality together with the induction hypothesis yields after integration
\begin{multline*} \int_0^s \|A^{d+1}_{s,t}(\cdot,\beta)\|_{L^1([0,\pi]^{d})} d\beta \\
\lesssim (t-s) \frac{ |\log(1-\frac s t)|^{d}}{s^{d-1}t} + \int_s^t \frac{s^{2-d}}{u^2} |\log(1-\frac s u)|^{d-1} du. \end{multline*}
With the change of variable $v=1-\frac s u$ the last integral becomes
\[s^{1-d} \int_0^{1-\frac s t} |\log v|^{d-1} dv.\]
One can check that this integral is less than $C (t-s) \left( \frac 1 s |\log(1-\frac s t)| \right)^{d}$. The inequality
\[ \int_0^s \|A^{d+1}_{s,t}(\cdot,\beta)\|_{L^1([0,\pi]^{d})} d\beta \lesssim  (t-s) \left( \frac 1 s |\log(1-\frac s t)|\right)^d \]
follows.

When $\beta\geq t$, we write $h_\beta(\theta) = h_\beta(s) + \int_s^t h_\beta'(u) \chi_{[u,t]}(\theta) du$ 
and by the inequalities $|h_\beta(\theta)| \lesssim \frac{1}{\beta(\beta-\theta)}$ and $|h_\beta'(u)| \lesssim \frac{u}{t^2(\beta-u)^2}$ valid for $u,\theta \leq t$, we get similarly
\[ \int_t^\pi \|A^{d+1}_{s,t}(\cdot,\beta)\|_{L^1([0,\pi]^{d})} d\beta \lesssim \frac{ |\log(1-\frac s t)|}{s} \|A^d_{s,t}\|_{L^1}+ \int_s^t \frac{u}{t^2(t-u)} \|A^d_{u,t}\|_{L^1}.\]
By the induction hypothesis the first term is $\lesssim (t-s)(\frac 1 s |\log(1-s/t)|)^d$, and the second one is less than
\begin{eqnarray*}
 \int_s^t \frac{u^{2-d}}{t^2} |\log(1-u/t)|^{d-1}du& \leq &  \frac t{s^d}\int_s^t  |\log(1-u/t)|^{d-1}du/t\\
 &=&\frac t{s^d}\int_{0}^{1 - \frac s t}  |\log v|^{d-1} dv\\
&\lesssim &(t-s)\left( \frac 1 s |\log(1-\frac s t)|\right)^d
 \end{eqnarray*}
Therefore, 
\[ \int_t^\pi \|A^{d+1}_{s,t}(\cdot,\beta)\|_{L^1([0,\pi]^{d})} d\beta \lesssim (t-s)\left( \frac 1 s |\log(1-\frac s t)|\right)^d .\] This completes the proof of \eqref{eq=induction_hypothesis} for $d+1$. The lemma is proved.
\end{proof}
\begin{rem}
 If we consider $f_1'+f_2'$ in (\ref{f1'}) and notice that 
\begin{eqnarray*}
f_1'+f_2'&=&i\sum_{m> 0} m(a_{m-1} - a_{m+1}) e^{im\theta}-i\sum_{m> 0} m(a_{m-1} - a_{m+1}) e^{-im\theta}\\
&=&i\sum_{m\in {\Z}} |m|(a_{|m|-1} - a_{|m|+1}) e^{im\theta}.
\end{eqnarray*}
We then get 
\begin{equation}\label{fm1tod} \| \sum_{n \in \Z^d} a_{|n|}  e^{ i n \cdot x}\|_{L^1([-\pi,\pi]^d)} \leq C_d \| \sum_{m\in {\Z}} |m|(a_{|m|-1} - a_{|m|+1}) e^{im\theta}\|_{L^1([-\pi,\pi])}\end{equation}
for finitely supported $a$, which says that the Fourier multiplier $e^{in\cdot x}\mapsto a_{|n|}e^{in\cdot x}, n\in {\Z}^d$ is bounded   on $L^\infty([-\pi,\pi]^d)$ for all $d\in {\N}$ provided $\lim_{k\rightarrow \infty} |a_{2k}|+|a_{2k+1}|<\infty$ and  the Fourier multiplier $e^{im\theta}\mapsto b_me^{im\theta}$ with $b_m=|m|(a_{|m|-1} - a_{|m|+1}),m\in{\Z}$, is  bounded   on $L^\infty([-\pi,\pi])$.
\end{rem}
\section{BMO and $H^\infty$ Calculus}\label{section=motivation}

A motivation in studying   $S_t^r$ comes from harmonic analysis on free groups. We will  briefly explain it in this section. We will also show a related result on  bounded  $H^\infty$-calculus.

Following \cite{M08} and \cite{JM12}, we may consider BMO spaces associated with the semigroups $S_t^r$ on the free group von Neumann algebras. For $f\in L^2(\hat {\mathbb F}_n)$, let 
$$\|f\|_{BMO^r}=\sup_{t\geq 0} \|S_t^r |f-S_t^r f|^2\|^\frac12.$$
Set 
$$BMO^r(\hat{\mathbb F}_n)=\{f\in L^2, \max\{\|f\|_{BMO^r},\|f^*\|_{BMO^r}\} <\infty\}.$$
Theorem 5.2 of \cite{JM12} says that the complex interpolation space between BMO$^r$ and $L^2(\hat {\mathbb F}_n)$ is $L^p(\hat {\mathbb F}_n)$,
for all $2<p<\infty$ and $0<r\leq1$.
Thus, for any $0<r\leq 1$, BMO$^r$ serves as an endpoint for $L^p(\hat {\mathbb F}_n)$ corresponds to $p=\infty$. 
What will be an endpoint space which could substitute $L^1(\hat {\mathbb F}_n) $? A natural candidate would be the $H^1$ space defined by the Littlewood-Paley G-function 
$$G(f)=(\int_0^\infty |\partial_t S^1_t f|^2tdt)^\frac12$$ for $f\in L^1$ and
$$ \|f\|_{H^1}=\tau G(f)<\infty .$$
In fact, for $n=1$, we have Fefferman--Stein's famous duality $(H^1)^*=BMO^1$  and the corresponding interpolation  result.
 There has not been an satisfactory $H^1$-BMO duality theory associated with semigroups on free group von Neumann algebra for $n>1$. A main obstacle is due to the missing of geometric/metric tools in the noncommutative setting. For example, when $n=1$, all the $H^1$-BMO duality-arguments (to the best knowledge of the authors) rely on an equivalent characterization of $H^1$ by the Lusin area-function, the definition of which is similar to the Littlewood-Paley G-function but uses an integration on cones instead of the radial integration. 
The concept of ``cones" on $\hat{\mathbb F}_n$ is an big mystery for $n>1$. However, there is a semigroup-representation of Lusin-area integrations as follows 
$$Af=(\int_0^\infty S_{t^2}^2|\partial_t S^1_t f|^2tdt)^\frac12,$$ which uses the semigroup $S_t^2$   to  compensate the ``integration on cones"  and $$\|f\|_{H^1}\simeq \| A(f)\|_{L^1}$$ for $n=1$ (see \cite{M08} for an explanation). We should point out that the equivalence $\|f\|_{H^1}\simeq \| A(f)\|_{L^1}$ fails if we replace the extra 
$S_t^2$ in the definition of $Af$ by $S_t^1$. 
The complete boundedness of $S_t^r$, especially for $r=2$, then draws our attention and is proved in Section 3. 
We still do not know wether a semigroup $H^1$-BMO duality holds on ${\cal L}({\mathbb F}_n)$ for $n>1$ and leave the question for later.

Junge--Le Merdy--Xu (\cite{JMX06}) studied $H^\infty$-calculus in the noncommutative setting (see \cite{CDMY96}). In particular, they obtain a bounded $H^\infty$-calculus property of ${\cal L}^r:\lambda_g\mapsto |g|^r\lambda_g$   on $L^p(\hat {\mathbb F}_n)$ and consequently a Littlewood-Paley theory for the corresponding semigroup $S_t^r$ for all $1<p<\infty,0<r\leq 1$.
The end point cases ($p=1,\infty$) are  more subtle and ${\cal L}^r$ has no bounded $H^\infty$-calculus on the group von Neuman algebra ${\cal L}{\mathbb F}_n$. 
In the rest part of this section, we will show that   $S_t^r, 0<r<1$ has  a bounded $H^\infty$-calculus on $BMO^\frac12({\mathbb F}_n)$. 
\begin{prop}\label{Mc}
Suppose $T$ is a sectorial operator on a Banach space $X$. Assume 
$\int_0^\infty Te^{-tT}a(t)dt$ is bounded on $X$ with norm smaller than $C$ for any choice $a(t)=\pm1$.
Then $T$ has a bound $H^\infty(S_\eta^0)$ calculus for any $\eta>\pi/2$. 
\end{prop}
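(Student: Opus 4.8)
The plan is to run McIntosh's approximation scheme for the $H^\infty$-calculus, using the semigroup $(e^{-tT})_{t>0}$ in place of the usual square function, the new input being an explicit Laplace-type representation of $f(T)$ whose symbol is controlled in $L^\infty(\R_+)$. Since the hypothesis involves $e^{-tT}$, the operator $-T$ generates a bounded analytic semigroup, so $T$ is sectorial of some angle $\omega<\pi/2$; after the standard reductions we may also assume $T$ injective with dense range, so that the $H^\infty(S^0_\eta)$-calculus is well defined. By the convergence lemma of McIntosh (see \cite{CDMY96}) it then suffices to produce a constant $C'$ with $\|f(T)\|\le C'\|f\|_\infty$ for every $f$ in the dense subalgebra $H^\infty_0(S^0_\eta)$ of holomorphic functions on $S^0_\eta$ that decay polynomially both at $0$ and at $\infty$.

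Fix $\nu$ with $\pi/2<\nu<\eta$ and, for $f\in H^\infty_0(S^0_\eta)$, set
\[ g_f(t)=\frac{1}{2\pi i}\int_{\partial S_\nu}e^{tz}\,\frac{f(z)}{z}\,dz\qquad(t>0),\]
the contour being the two rays $\{re^{\pm i\nu}:r>0\}$, which lie in $\{\operatorname{Re}z<0\}$ since $\nu>\pi/2$ and along which $f$ is holomorphic and bounded because $S^0_\eta\supset S_\nu$. Deforming the Bromwich line for the inverse Laplace transform of $z\mapsto f(z)/z$ onto $\partial S_\nu$ and using $f(0)=0$ (a partial-fraction computation) gives $f(z)=\int_0^\infty z e^{-tz}g_f(t)\,dt$ for $\operatorname{Re}z>0$. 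The decay of $f$ gives $g_f(t)=O(t^s)$ as $t\to0$ and $O(t^{-s})$ as $t\to\infty$ for the relevant exponent $s>0$, so $t\mapsto g_f(t)/t$ lies in $L^1(\R_+)$. Since $z\mapsto ze^{-tz}$ lies in $H^\infty_0$ of every sector of half-angle $<\pi/2$ and $(ze^{-tz})(T)=Te^{-tT}$, substituting the identity above into the Cauchy integral $f(T)=\tfrac1{2\pi i}\int_{\partial S_\mu}f(z)(z-T)^{-1}\,dz$ (with $\omega<\mu<\pi/2$) and interchanging the integrals (legitimate since $t\mapsto g_f(t)/t$ is integrable and $\|Te^{-tT}\|\lesssim 1/t$) yields the representation
\[ f(T)=\int_0^\infty Te^{-tT}\,g_f(t)\,dt .\]

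The analytic core is the uniform estimate $\|g_f\|_{L^\infty(\R_+)}\le C_\nu\|f\|_\infty$, with $C_\nu$ depending only on $\nu$ (not on the decay constants of $f$). To prove it, fix $t>0$ and, staying inside the domain of holomorphy of $f$, deform the portion of $\partial S_\nu$ with $|z|<1/t$ into the circular arc $\{z:|z|=1/t,\ |\arg z|\le\nu\}$; this leaves $g_f(t)$ unchanged. On the new contour $|z|\ge1/t$, hence $|f(z)/z|\le t\|f\|_\infty$; bounding $|e^{tz}|\le e^{tr\cos\nu}$ on the rays (where $\cos\nu<0$) and $|e^{tz}|\le e$ on the arc, and substituting $u=tr$ on the rays, one obtains
\[ |g_f(t)|\le\|f\|_\infty\Big(\tfrac1\pi\int_1^\infty\frac{e^{-u|\cos\nu|}}{u}\,du+\tfrac{\nu e}{\pi}\Big)=:C_\nu\|f\|_\infty .\]
The same $t$-adapted contour, now retaining the genuine decay of $f$, produces the $O(t^s)$ and $O(t^{-s})$ bounds used above. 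Note $C_\nu\to\infty$ as $\nu\downarrow\pi/2$, which is why the conclusion is stated for $\eta>\pi/2$ and not $\eta=\pi/2$.

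Finally one upgrades the hypothesis from sign symbols to arbitrary bounded ones: for $c\in[-1,1]$ one has $c=\tfrac12\int_{-1}^1\operatorname{sign}(c-u)\,du$, so for real $g\in L^\infty(\R_+)$ with $\|g\|_\infty\le1$,
\[ \int_0^\infty Te^{-tT}g(t)\,dt=\frac12\int_{-1}^1\Big(\int_0^\infty Te^{-tT}\operatorname{sign}(g(t)-u)\,dt\Big)du ,\]
and each inner operator, having a $\{-1,1\}$-valued symbol, has norm $\le C$ by hypothesis, so the left side has norm $\le C$; for general $g$ this gives $\|\int_0^\infty Te^{-tT}g(t)\,dt\|\le 2C\|g\|_\infty$ by splitting into real and imaginary parts. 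Taking $g=g_f$ yields $\|f(T)\|\le 2C\|g_f\|_\infty\le 2C\,C_\nu\|f\|_\infty$ for all $f\in H^\infty_0(S^0_\eta)$, and McIntosh's lemma then gives the bounded $H^\infty(S^0_\eta)$-calculus. I expect the main obstacle to be making rigorous every manipulation of the operator-valued improper integral $\int_0^\infty Te^{-tT}(\cdot)\,dt$, which does not converge absolutely since $\|Te^{-tT}\|$ is only $\lesssim1/t$: in particular the Fubini interchanges (handled by truncating $t$ to $[\varepsilon,R]$, applying Fubini on the rectangle, and letting $\varepsilon\to0$, $R\to\infty$ while using that the truncated integrals with $\pm1$ symbols are uniformly bounded) and the sign-function decomposition just used.
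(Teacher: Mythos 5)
Your argument is correct, but it takes a genuinely different route from the paper. The paper's proof is a two-line reduction: choosing $a(t)=\operatorname{sign}\langle Te^{-tT}u,v\rangle$ (splitting into real and imaginary parts to stay $\pm1$-valued) turns the hypothesis into the weak estimate $\int_0^\infty|\langle Te^{-tT}u,v\rangle|\,dt\le C\|u\|\,\|v\|$, which is exactly the hypothesis of Example 4.8 of \cite{CDMY96}, and that example is then quoted. What you have done is essentially to reprove the content of that example from scratch: the Laplace-type representation $f(T)=\int_0^\infty Te^{-tT}g_f(t)\,dt$ for $f\in H^\infty_0(S^0_\eta)$ with the contour estimate $\|g_f\|_\infty\le C_\nu\|f\|_\infty$ for $\pi/2<\nu<\eta$ (which is precisely where the restriction $\eta>\pi/2$ enters), combined with McIntosh's convergence lemma; and instead of dualizing, you exploit the $\pm1$ hypothesis through the averaging identity $c=\tfrac12\int_{-1}^1\operatorname{sign}(c-u)\,du$, truncation in $t$, and uniform bounds on the truncated integrals. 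Both steps check out: the residue computation giving $f(\lambda)=\int_0^\infty\lambda e^{-t\lambda}g_f(t)\,dt$, the $t$-adapted contour bound with constant $\tfrac1\pi\int_1^\infty u^{-1}e^{-u|\cos\nu|}du+\tfrac{\nu e}{\pi}$, the decay $g_f(t)=O(\min(t^s,t^{-s}))$ making $\int_0^\infty Te^{-tT}g_f(t)\,dt$ absolutely convergent, and the limiting argument in the sign-averaging step (one only needs uniform norm bounds on the truncations, not convergence of the inner integrals). What each approach buys: yours is self-contained, makes the constant explicit ($\|f(T)\|\le 2C\,C_\nu\|f\|_\infty$) and shows transparently why $\eta>\pi/2$ is needed; the paper's is much shorter and avoids all the operator-valued improper-integral and Fubini issues you flag, since after dualizing everything is a scalar, absolutely convergent integral. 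Both share the same mild informality about the sense in which $\int_0^\infty Te^{-tT}a(t)\,dt$ is assumed to exist and about the standard reduction to $T$ injective with dense range.
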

\begin{proof} This  is a consequence of  Example 4.8   of \cite{CDMY96} by setting $a(t)$ to be the sign of $  \langle Te^{-tT}u, v\rangle$ for any pair $(u,v)\in( X,X^*)$.
\end{proof}
\begin{prop}\label{JM12}
Suppose   $a(t)$ is a function on $(0,\infty)$ satisfying 
\begin{eqnarray}\label{correction}
s\int_s^\infty\frac{ |a(t-s)|^2}{t^2}dt\leq c_a^2.\end{eqnarray} 
for any $s>0$.
Then $\int_0^\infty  {\cal L}^\frac12 e^{-t{\cal L}^\frac12}a(t)dt$ is completely bounded on $BMO^\frac12({\mathbb F}_n)$ with upper bound $\lesssim c_a$.
\end{prop}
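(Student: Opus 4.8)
The plan is to run the estimate within the $BMO$-machinery of \cite{JM12}. Write $\mathcal A=\mathcal L^{1/2}$ and $S_t=S^{1/2}_t=e^{-t\mathcal A}$, so that $\mathcal A e^{-t\mathcal A}=-\partial_tS_t$ and the operator in question is the Fourier multiplier $T_a=\psi(\mathcal A)$ with symbol
\[ \psi(\lambda)=\int_0^\infty\lambda e^{-t\lambda}a(t)\,dt=\lambda\,\widehat a(\lambda),\qquad \widehat a(\lambda)=\int_0^\infty e^{-t\lambda}a(t)\,dt .\]
The role of hypothesis \eqref{correction} is to produce, via the Cauchy--Schwarz inequality against the weight $w_s(t)=s/(t+s)^2$, the two scalar bounds
\[ \sup_{\lambda>0}|\psi(\lambda)|\ \lesssim\ c_a ,\qquad \sup_{R>0}\ R\int_R^{2R}|\psi'(\lambda)|^2\,d\lambda\ \lesssim\ c_a^2 .\]
Indeed $|\psi(\lambda)|\le\lambda\int_0^\infty e^{-t\lambda}|a(t)|\,dt$, while $\psi'(\lambda)=\int_0^\infty(1-t\lambda)e^{-t\lambda}a(t)\,dt$ yields $|\psi'(\lambda)|\lesssim\int_0^\infty e^{-t\lambda/2}|a(t)|\,dt$; applying Cauchy--Schwarz against $w_{1/\lambda}$, using the elementary estimate $\int_0^\infty e^{-t\lambda}w_{1/\lambda}(t)^{-1}\,dt\lesssim\lambda^{-2}$ and the bound $\int_0^\infty|a(t)|^2w_{1/\lambda}(t)\,dt\le c_a^2$ (which is \eqref{correction} with $s=1/\lambda$), one gets $|\psi(\lambda)|\lesssim c_a$ and $|\psi'(\lambda)|\lesssim c_a/\lambda$, the second scalar bound following by integration over $[R,2R]$. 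In particular $T_a$ is completely bounded on $L^2(\hat{\mathbb F}_n)$ with norm $\lesssim c_a$, so it is well defined on the dense subspace $BMO^{1/2}(\hat{\mathbb F}_n)\cap L^2$.

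The core of the proof is then that a bounded symbol $\psi$ satisfying $\sup_{R>0}R\int_R^{2R}|\psi'|^2\,d\lambda<\infty$ gives a completely bounded operator $\psi(\mathcal A)$ on $BMO^{1/2}(\hat{\mathbb F}_n)$, with norm controlled by the two constants above; this is of the type established in \cite{JM12} for semigroups of completely positive maps, and I would invoke it directly, or reprove it by the same method. Since $\|T_af\|_{BMO^{1/2}}^2=\sup_s\|S_s(|T_af-S_sT_af|^2)\|_\infty$, and since $(T_af)^*=T_{\overline{a}}(f^*)$ with $\overline{a}$ satisfying \eqref{correction} with the same $c_a$, it suffices to bound $\|S_s(|T_af-S_sT_af|^2)\|_\infty\lesssim c_a^2\|f\|_{BMO^{1/2}}^2$ uniformly in $s$. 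For this, fix $s$, decompose $T_a=\sum_{j\in\Z}(\psi\varphi_j)(\mathcal A)$ for a Littlewood--Paley partition of unity $\sum_j\varphi_j\equiv1$ at the frequency scales $2^j$ built from the semigroup $(S_t)$, and split the sum over $j$ according to whether $2^{-j}\le s$ or $2^{-j}>s$. The terms with $2^{-j}\le s$ (scales finer than $s$) are handled with the Carleson-measure characterization of $\|f\|_{BMO^{1/2}}$ from \cite{JM12} together with the uniform bound $\|\psi\varphi_j\|_\infty\lesssim c_a$; the terms with $2^{-j}>s$ are handled by a square-function estimate, the bound on $\int_R^{2R}|\psi'|^2$ supplying the summability of the $j$-series. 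The Littlewood--Paley and Carleson estimates of \cite{JM12}, stated there for semigroups of completely positive maps, are transported from the word-length semigroup $S^1_t$ (completely positive by Haagerup) to $S^{1/2}_t=e^{-t(\mathcal L^1)^{1/2}}$ by subordination.

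I expect the decisive difficulty to be precisely this last step, namely making the $BMO$-boundedness criterion work in the noncommutative semigroup setting. At the $L^\infty$ endpoint neither orthogonality nor interpolation from $L^p$ is available, so one genuinely needs the Carleson-measure description of $BMO^{1/2}$ together with the operator-valued Cauchy--Schwarz inequality to accommodate the noncommutativity of $x\mapsto|x|^2$; the precise shape of \eqref{correction} is what makes the two regimes $2^{-j}\le s$ and $2^{-j}>s$ combine with the right constant. Complete boundedness then costs nothing extra, since every estimate is carried out at the operator level, equivalently after amplification by an arbitrary matrix algebra $M_m(\C)$, so the same bound $\lesssim c_a$ holds for the amplified maps uniformly in $m$.
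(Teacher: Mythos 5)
Your proposal does not follow the paper's route, and as written it has a genuine gap at its core. The paper's proof is essentially a one-step application of Corollary 3.4 of \cite{JM12} to the heat-type semigroup $S_t^1$: the hypothesis \eqref{correction} is verbatim the hypothesis of that corollary, the subordinated Poisson semigroup of $S_t^1$ is $S_t^{1/2}$, so the space $BMO(\mathcal P)$ of \cite{JM12} coincides with $BMO^{\frac12}$, and the only substantive thing left to check is the structural hypothesis $\Gamma^2\geq 0$ for $S_t^1$, which the paper verifies by observing that the kernel $K(g,h)=\bigl(\frac{|g|+|h|-|g^{-1}h|}{2}\bigr)^2$ is positive definite, a consequence of the negative definiteness of the word length. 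Your argument never identifies or verifies this hypothesis; "completely positive by Haagerup" is not the condition under which the Littlewood--Paley and Carleson estimates of \cite{JM12} are available at the $BMO$ endpoint, so the transport step you invoke is unsupported.

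The second problem is that you discard the exact form of \eqref{correction} too early. Your Cauchy--Schwarz computation giving $|\psi(\lambda)|\lesssim c_a$ and $|\psi'(\lambda)|\lesssim c_a/\lambda$ is correct, but these Mikhlin-type bounds of order one are strictly weaker than \eqref{correction}, and the "core" theorem you then need --- that a bounded symbol with $\sup_R R\int_R^{2R}|\psi'|^2\lesssim c_a^2$ defines a completely bounded operator on $BMO^{\frac12}$ --- is not a result of \cite{JM12} and is not proved by you: the Littlewood--Paley decomposition, the splitting at scale $s$, and the Carleson-measure estimate are described only as a plan, and you yourself flag this as the decisive difficulty. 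Since Corollary 3.4 of \cite{JM12} is stated precisely for operators of the Laplace-transform form $\int_0^\infty T e^{-tT}a(t)\,dt$ under \eqref{correction} (plus $\Gamma^2\geq 0$), the efficient and correct route is to keep that form and check its hypotheses, as the paper does, rather than to reduce to weaker scalar symbol bounds and attempt to rebuild the endpoint multiplier theorem from scratch.
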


\begin{proof} We apply Corollary 3.4 of \cite{JM12} to  $S_t^1$. Note that the   subordinated Poisson semigroup of $S_t^1$ is $S_t^{\frac12}$. So  the  space $BMO({\cal P})$ associated with $S_t^1$ as defined   in  \cite{JM12} is  the space $BMO^\frac12$ defined in this section. Corollary 3.4 of \cite{JM12} then implies Proposition \ref{Mc}.
Because the required $\Gamma^2\geq 0$ condition associated with $S_t^1$
is actually the positive definiteness  of   the kernel $K(g,h)=(\frac {|g|+|h|-|g^{-1}h|}2)^2$  on ${\mathbb F}_n\times {\mathbb F}_n$, which easily follows from the negative definiteness of the length function $|\cdot|$.
\end{proof}

\begin{rem} There are a few misprints in \cite{JM12}. The condition of $a(t)$ on page 710 is miss-written. The correct one is (\ref{correction}) in this article. In Thereom 3.3 of \cite{JM12}, the integer $n$ must be  strictly positive. 
\end{rem}

\begin{thm}
For $0<r<1$, ${\cal L}^r:\lambda_g\mapsto |g|^r\lambda_g$ has a bounded $H^\infty(S_\eta^0)$ calculus on $BMO^\frac12(\hat{\mathbb F}_n)$ for any $\eta>r \pi$. 
\end{thm}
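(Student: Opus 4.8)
The plan is to establish the result first for the distinguished exponent $r=\tfrac12$, where Propositions~\ref{Mc} and~\ref{JM12} combine to give the conclusion with angle $>\pi/2$, and then to transfer it to an arbitrary $0<r<1$ by writing ${\cal L}^r=({\cal L}^{\frac12})^{2r}$ and invoking the composition rule for the $H^\infty$ functional calculus of fractional powers.

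For the case $r=\tfrac12$: the operator ${\cal L}^{\frac12}\colon\lambda_g\mapsto|g|^{\frac12}\lambda_g$ is sectorial on $BMO^{\frac12}(\hat{\mathbb F}_n)$, since $-{\cal L}^{\frac12}$ generates the subordinated Poisson semigroup $S^{\frac12}_t$, which is the semigroup in terms of which $BMO^{\frac12}$ is defined and is therefore bounded on it (compare the proof of Proposition~\ref{JM12}). Now let $a\colon(0,\infty)\to\{-1,+1\}$ be any measurable function. Then $|a|\equiv 1$, so for every $s>0$
\[ s\int_s^\infty\frac{|a(t-s)|^2}{t^2}\,dt=s\int_s^\infty\frac{dt}{t^2}=1,\]
that is, condition~\eqref{correction} holds with $c_a=1$. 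Proposition~\ref{JM12} therefore shows that $\int_0^\infty {\cal L}^{\frac12}e^{-t{\cal L}^{\frac12}}a(t)\,dt$ is completely bounded on $BMO^{\frac12}(\hat{\mathbb F}_n)$ with a bound $\lesssim 1$, uniformly over all such $a$. Feeding this into Proposition~\ref{Mc} with $T={\cal L}^{\frac12}$ and $X=BMO^{\frac12}(\hat{\mathbb F}_n)$, we conclude that ${\cal L}^{\frac12}$ admits a bounded $H^\infty(S_\eta^0)$ calculus on $BMO^{\frac12}(\hat{\mathbb F}_n)$ for every $\eta>\pi/2$.

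It then remains to pass from ${\cal L}^{\frac12}$ to ${\cal L}^r$. On the linear span of the $\lambda_g$ one has $({\cal L}^{\frac12})^{2r}\lambda_g=|g|^r\lambda_g$, so ${\cal L}^r=({\cal L}^{\frac12})^{2r}$ as sectorial operators, with $0<2r<2$. By the standard composition rule, if a sectorial operator $T$ has a bounded $H^\infty(S_\eta^0)$ calculus and $\beta>0$, then $T^\beta$ has a bounded $H^\infty(S_{\beta\eta}^0)$ calculus: indeed $z\mapsto z^\beta$ is a bijection of $S_\eta^0$ onto $S_{\beta\eta}^0$ whenever $\beta\eta<\pi$, so $f\mapsto (z\mapsto f(z^\beta))$ maps $H^\infty(S_{\beta\eta}^0)$ isometrically into $H^\infty(S_\eta^0)$ and $f(T^\beta)=(z\mapsto f(z^\beta))(T)$. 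Applying this with $T={\cal L}^{\frac12}$, $\beta=2r$, and letting $\eta$ decrease to $\pi/2$, we obtain that ${\cal L}^r$ has a bounded $H^\infty(S_{\eta'}^0)$ calculus on $BMO^{\frac12}(\hat{\mathbb F}_n)$ for every $\eta'>2r\cdot\frac\pi2=r\pi$; here $r\pi<\pi$ because $r<1$, so this range of angles is admissible, giving the theorem.

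The analytic substance is carried entirely by Propositions~\ref{Mc} and~\ref{JM12}; the rest is bookkeeping, and I expect the only points needing a little care to be (i) the sectoriality of ${\cal L}^{\frac12}$ on the non-self-adjoint Banach space $BMO^{\frac12}(\hat{\mathbb F}_n)$, which reduces to the boundedness there of the Poisson semigroup $S^{\frac12}_t$ and is implicit in \cite{JM12}, and (ii) the identification ${\cal L}^r=({\cal L}^{\frac12})^{2r}$ at the level of the functional calculus, so that the composition rule may be invoked unambiguously.
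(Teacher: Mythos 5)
Your proposal is correct and follows essentially the same route as the paper: verify condition \eqref{correction} with $c_a=1$ for $|a(t)|=1$, combine Proposition \ref{JM12} with Proposition \ref{Mc} for $T={\cal L}^{\frac12}$ to get a bounded $H^\infty(S_\eta^0)$ calculus for $\eta>\pi/2$, and then pass to ${\cal L}^r=({\cal L}^{\frac12})^{2r}$ to reach any angle $\eta>r\pi$. The only difference is that you spell out the fractional-power composition rule and the sectoriality check that the paper compresses into ``It is easy to see that $S_t^1$ is a bounded semigroup on $BMO^{\frac12}$\dots Therefore''.
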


\begin{proof} It is easy to see that $S_t^1$ is a bounded semigroup on $BMO^\frac12$. So ${\cal L}^r$ is a sectorial operator on $BMO^\frac12$ of type $\frac{r\pi}2$ for $0<r<1$.
Applying Proposition \ref{JM12} to Proposition \ref{Mc}, for $|a(t)|=1$ and $T={\cal L}^{\frac12}$ we conclude that ${\cal L}^\frac12$ has a bounded $H^\infty(S_\eta^0)$ calculus on $BMO^\frac12$ for any $\eta>\pi/2$. Therefore, ${\cal L}^r$ has a bounded $H^\infty(S_\eta^0)$ calculus on $BMO^\frac12$ for any $\eta>r\pi$. 
\end{proof}

\bigskip
{\bf Acknowlegement.} The authors thank Narutaka Ozawa for helpful comments. 


\bibliographystyle{amsplain}

\bigskip

\hfill \noindent \textbf{Tao Mei} \\
\null \hfill Department of Mathematics
\\ \null \hfill Wayne State University \\
\null \hfill 656 W. Kirby Detroit, MI 48202. USA \\
\null \hfill\texttt{mei@wayne.edu}
\bigskip

\hfill \noindent \textbf{Mikael de la Salle} \\
\null \hfill CNRS-ENS de Lyon,
\\ \null \hfill UMPA UMR 5669 \\
\null \hfill F-69364 Lyon cedex 7, France\\
\null \hfill\texttt{mikael.de.la.salle@ens-lyon.fr}

\end{document}